\let\ds=\displaystyle
\let\to=\rightarrow
\newcommand{\be}{\begin{enumerate}}
\newcommand{\ee}{\end{enumerate}}
\newcommand{\bi}{\begin{itemize}}
\newcommand{\ei}{\end{itemize}}
\def \pa#1{{\left(#1\right)}}
\def \ac#1{{\left\{#1\right\}}}
\def\bs{\bigskip}
\def\ms{\medskip}
\font\ineg=msam8
\def\ie{\mathrel{\hbox{\ineg 6}}} 
\def\se{\mathrel{\hbox{\ineg >}}} 
\font\matcinq=msbm5
\font\matsept=msbm7
\font\matdix=msbm10
\def\mat{\fam\matfam}
\def\N{{\mat N}} \def\Z{{\mat Z}}
 \def\Q{{\mat Q}}
 \def\C{{\mat C}}
\newcommand{\bib}[2]{\hbox{\hbox to 12mm{[#1] :\hfill} \hfill \hbox to 144mm{\vtop{\hsize=144mm#2\vfill\ms}\hfill}}}
\let\eps=\varepsilon
\let\phi=\varphi
\newtheorem{Def}{Definition I-\!}[section]
\newtheorem{Thm}{Theorem I-\!}[section]
\newtheorem{Lem}{Lemma I-\!}[section]
\newtheorem{Cor}{Corollary I-\!}[section]
\newtheorem{Not}{Notation I-\!}[section]
\begin{document}

\begin{center}
\Large{\textsc{Structure and bases of modular space sequences}}

\Large{\textsc{$(M_{2k}(\Gamma_0(N)))_{k\in \N^*}$ and $(S_{2k}(\Gamma_0(N)))_{k\in \N^*}$}}
\bs
\ms

\large\textsc{Part I : Strong modular units}
\end{center}
\ms

\begin{center}
\large Jean-Christophe Feauveau
\footnote{Jean-Christophe Feauveau,\\
Professeur en classes préparatoires au lycée Bellevue,\\
135, route de Narbonne BP. 44370, 31031 Toulouse Cedex 4, France,\\
email : Jean-Christophe.Feauveau@ac-toulouse.fr
}

\end{center}

\begin{center}
\large JAugust 30, 2018
\end{center}

\bs

\textsc{Abstract.}

The modular discriminant $\Delta$ is known to structure the sequence of modular forms $(M_{2k}(SL_2(\Z)))_{k\in \; \N^*}$ at level $1$.
For all positive integer $N$, we define a strong modular unit $\Delta_N$ at level $N$ which enables one to structure the family $(M_{2k}(\Gamma_0(N)))_{k\in \; \N^*}$ in an identical way. We will apply this result to the bases search for each of the spaces $(M_{2k}(\Gamma_0(N)))_{k\in \; \N^*}$.
\ms

This article is the first in a series of three. In the second part we will propose explicit bases of $(M_{2k}(\Gamma_0(N)))_{k\in \; \N^*}$ for $1\ie N \ie 10$. Finally, in a third part, we will apply the results obtained in the first two parts to $(S_{2k}(\Gamma_0(N)))_{k\in \; \N^*}$.
\bs
\bs

\textsc{Key words.} modular forms, modular units, Dedekind's eta function.
\bs

Classification A.M.S. 2010 : 11F11, 11G16, 11F33, 33E05.
\bs
\bs

{\bf Introduction}
\ms

When studying modular forms, an important result concerns the structure of the $(M_{2k}(SL_2(\Z)))_{k \in \; \N^*}$ obtained through the $\Delta$ function, and the ability to provide an explicit basis for each subspace \cite{Serre} pages 143-144.
\ms

Such a result seems to be missing for spaces $(M_{2k}(\Gamma_0(N)))_{k \in \; \N^*}$, when $N\se 2$. We propose in this article an explicit decomposition of modular form spaces $(M_{2k}(\Gamma_0(N)))_{(k,N) \in \; \N^{*2}}$. Such a reduction cannot be simple, the formulas giving the dimension of these spaces \cite{Diam}, \cite{Stein} are a sufficient clue. However, we will show that, for a fixed $N$ level, there is a $\Delta_{N}$ function that will play for $(M_{2k}(\Gamma_0(N)))_{k\in \; \N^*}$ the role of $\Delta = \Delta_1$ in the study of $(M_{2k}(SL_2(\Z)))_{k\in \; \N^*}$.
\ms

More precisely, for any strictly positive integer $N$ and noting $\rho_N$ the weight of $\Delta_{N}$, we will establish, specifying this result :

\be
\item[]
\textit{The knowledge of $M_{2k}(\Gamma_0(N))$ bases for $1 \ie k \ie \frac{1}{2}\rho_N + $1 leads to the knowledge of $M_{2k}(\Gamma_0(N))$ bases for the entire $k$.}
\ee

\bs

In Part II, we will see how to explicitly describe $B_{2k}(N)$ bases of $M_{2k}(\Gamma_0(N))$ for $1 \ie k \ie \frac{1}{2}\rho_N + 1$ when $N$ is between 1 and 10, using Weierstrass elliptical functions.
\ms

Moreover, and for any $N$, this result is algorithmic. It enables one to obtain basic Fourier developments $(B_{2k}(N))_{k\in \; \N^*}$ to a given order of precision as soon as one has such developments for $1 \ie k \ie \frac{1}{2}\rho_N + 1$, which it is possible to obtain with the help of SAGE for example.
\bs

First, the structure of families $(M_{2k}(\Gamma_0(N)))_{k\in\; \N^*}$ will be studied under the assumption of the existence of a strong modular unit $\Delta_N$. This will be demonstrated when $N$ is a prime number, then generalized for any $N$.
\bs
\ms

\section{-- Some reminders on modular forms}

We recall some usual definitions and notations. For a rich and structured course on modular forms, we can consult \cite{Apos} or, more advanced, \cite{Diam}. We only recall what we will need afterwards.
\ms

\begin{Def}
We call Poincaré half-plane the set ${\cal H} = \{\in \C \ / \ Im(\tau) > 0\}$.
\end{Def}

In what follows, $\tau$ is a complex variable with values in $\cal H$, and we note $q = e^{2i\pi\tau}$.

\begin{Def}
For an integer $N\se 1$, we set $\Gamma_0(N) = \{\begin{pmatrix} a & b\\ c & d\end{pmatrix} \in SL_2(\Z) \ / \ c \equiv \ 0 \ {\rm mod} \ N\}$.

This is a subgroup of $SL_2(\Z)$ at level $N$ of finite index.
\end{Def}
\ms

\begin{Def}\label{Def3}
Let $k$ be an integer. A modular form of weight $k$ with respect to $\Gamma_0(N)$ is a holomorphic function $\Phi : {\cal H} \to \C$ verifying
\be
\item[(i)]
\begin{equation}
\Phi\pa{\frac{a\tau + b}{c\tau + d}} = (c\tau + d)^{k} \Phi(\tau) \ \text{for all} \ \begin{pmatrix} a & b\\ c & d\end{pmatrix}\in \Gamma_0(N). \label{MF}
\end{equation}

\item[(ii)]
For any $\begin{pmatrix} a & b\\ c & d\end{pmatrix}\in SL_2(\Z)$, $\tau \mapsto (c\tau + d)^{-k} \Phi\pa{\frac{a\tau + b}{c\tau + d}}$ admits a limit in $\C$ when $\tau$ tends towards $i\infty$, i.e. $Im(\tau)$ tends towards $+\infty$.
\ee
Note $M_{2k}(\Gamma_0(N))$ the space of the modular forms of weight $k$ with respect to $\Gamma_0(N)$.
\end{Def}
\ms

The condition $(i)$ indicates the weak modularity property of $\Phi$, whereas $(ii)$ is equivalent to the existence of a limit \textit{at all cusps}, i.e. :
\be
\item[$(ii)'$]
\textit{At the infinite cusp}, we ask for the existence of $\ds\lim_{\tau\to i\infty} \Phi(\tau + b)$ for $b\in \Z$.
\item[$(ii)''$]
\textit{For the rational cusp $\ds r = -\frac{d}{c}$ with $gcd(c,d)=1$}, let $(a,b)\in \Z^2$ be such that $\begin{pmatrix} a & b\\ c & d\end{pmatrix}\in SL_2(\Z)$, we ask for the existence of
$\ds \lim_{\genfrac{}{}{0pt}{1}{\tau\to -d/c}{\tau\in {\cal H}}} (c\tau + d)^{-k} \Phi\pa{\frac{a\tau + b}{c\tau +d}}$.

\ee

The equivalence between $(ii)$ and $[(ii)' \cup (ii)'']$ comes from the fact that $A\mapsto -A^{-1}$ is a bijection on $SL_2(\Z)$.
\ms

Of course, just check $(ii)$ on a class representative system of $SL_2(\Z)/\Gamma_0(N)$. We speak of cancellation at a cusp in the event of a zero limit at this cusp, and not of cancellation otherwise.
\bs

Thereafter, we only consider even weights $k$ because the definition I-\ref{Def3} leads to null spaces for odd weights. It is well known, and not trivial, that dimension of $M_{2k}(\Gamma_0(N))$ spaces are all finite. Note $d_{2k}(N)$ the dimension of $M_{2k}(\Gamma_0(N))$.
\bs

For $k \se 2$ and $\tau \in {\cal H}$, we define the normalized Eisenstein series \cite{Apos,Diam}:

\begin{equation}
\begin{array}{lcl}
E_{2k}(\tau) & = & \ds 1 + c_{2k}\sum_{n=1}^{+\infty} \sigma_{2k-1}(n)q^n\\
 & = & \ds \frac{1}{2\zeta(2k)} \ds \sum_{\genfrac{}{}{0pt}{1}{(m,n)\in \Z^2}{(m,n)\not = (0,0)}} \frac{1}{(m\tau+n)^{2k}}
\end{array}
\end{equation}

with $\ds c_{2k} = \frac{(2i\pi)^{2k}}{(2k-1)!\zeta(2k)}$. 
\ms

It is easy to show that $E_{2k}\in M_{2k}(SL_2(\Z))$, which ensures the non-triviality of the space. Nevertheless, it is the $\Delta\in M_{12}(SL_2(\Z))$ function that will structure the sequence $(M_{2k}(SL_2(\Z)))_{k\in\N^*}$ :
\begin{equation}
\forall \tau\in {\cal H}, \ \ \Delta(\tau) = q \prod_{n=1}^{+\infty}(1-q^n)^{24} = q-24q^2+252q^3+\ldots
\end{equation}

The $\Delta$ function is holomorphic, does not cancel on $\cal H$ and $\ds \lim_{\tau\to \infty} \Delta(\tau) = 0$, it cancels at the infinite cusp.
\ms

We recall the well-known result of structure of modular forms with respect to $SL_2(\Z) = \Gamma_0(1)$ :
\begin{equation}
\forall k\se 6, \ M_{2k}(\Gamma_0(1)) = Vect(E_{2k}) \oplus \Delta . M_{2k-12}(\Gamma_0(1)). \label{St1}
\end{equation}

Indeed, the application $\ds \Phi\mapsto \Phi .\Delta^{-1}$ is an isomorphism between the space of the modular forms of weight $2k$ which cancel at $i\infty$ (the cuspidal forms) and $M_{2k-12}(\Gamma_0(1))$ \cite{Serre}. It is this result that we will generalize.
\bs
\bs

\section{-- Spaces $\pa{M_{2k}(\Gamma_0(N))}_{k\in \N^*}$ structure}

\begin{Def}
Let $k$ and $N$ be two positive integers, and $\Phi\in M_{2k}(\Gamma_0(N))$. The function $\Phi$ is a $2k$ modular unit with respect to $\Gamma_0(N)$ (or at level $N$) if and only if :

\be
\item[(i)]
The $\Phi$ function does not cancel on $\cal H$.
\item[(ii)]
The $\Phi$ function cancels at the infinite cusp.
\item[(iii)]
The $\Phi$ function does not cancel at any other cusp with respect to $\Gamma_0(N)$.
\ee
\end{Def}

If instead of $(iii)$ one requires also cancellation at all other rational cusps, one obtains the cuspidal forms. And if the other cusps are asked not to be cancelled (condition $(iii)$), we have the notion of strong modular unity. These are two natural ways to generalize ownership of $\Delta$, which cancels at the unique cusp with respect to $\Gamma_0(1)$.
\ms

\begin{Def}\label{DefUMF}
A modular form $\Phi(\tau) = a q^n+ O(q^{n+1})$, with $a\not = 0$, is said to be of valuation $n$ and we write $\nu(\Phi) = n$. Moreover, the function $\Phi$ is unitary when $a =1$.

An upper triangular basis ${\cal B}_{2k}(\Gamma_0(N)) = (E_{2k,N}^{(r)})_{0\ie r \ie d_{2k}(N)-1}$ for space $M_{2k}(\Gamma_0(N))$ verifies $\nu(E_{2k,N}^{(r)}) < \nu(E_{2k,N}^{(r+1)})$ for all $0\ie r \ie d_{2k}(N)-2$. If the elements of ${\cal B}_{2k}(\Gamma_0(N))$ are unitary, we say that the basis is unitary upper triangular.
\end{Def}
\ms

\begin{Lem}
For all positive integers $N$ and $k$, the space $M_{2k}(\Gamma_0(N))$ admits a unitary upper triangular basis. Moreover, if $(E_{2k,N}^{(r)})_{0\ie r \ie d_{2k}(N)-1}$ is such a basis, then the sequence $(\nu(E_{2k,N}{(r)}))_{0\ie r \ie d_{2k}(N)-1}$ is independent of the basis's choice.
\end{Lem}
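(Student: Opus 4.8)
The plan is to prove the two assertions separately: first the existence of a unitary upper triangular basis, then the invariance of the valuation sequence.

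\textbf{Existence.} Let $d = d_{2k}(N)$, which is finite by the remarks preceding the statement. Consider the map $\nu : M_{2k}(\Gamma_0(N)) \setminus \{0\} \to \N$ sending a nonzero form to its valuation at $i\infty$ (the order of vanishing in the variable $q$), as in Definition I-\ref{DefUMF}. The image $\nu(M_{2k}(\Gamma_0(N)) \setminus \{0\})$ is a subset of $\N$; I would show it has exactly $d$ elements. One inclusion: if it had at least $d+1$ distinct values, pick forms $\Phi_0, \ldots, \Phi_d$ with strictly increasing valuations; a standard triangularity argument (a nontrivial linear combination $\sum \lambda_i \Phi_i = 0$ with, say, $\lambda_j$ the coefficient of smallest index that is nonzero, forces the $q^{\nu(\Phi_j)}$ coefficient to vanish, contradiction) shows they are linearly independent, contradicting $\dim = d$. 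Hence there are at most $d$ attained valuations; writing them as $n_0 < n_1 < \cdots < n_{d-1}$ and choosing for each a form $G_r$ with $\nu(G_r) = n_r$, the same triangularity argument shows the $G_r$ are linearly independent, hence a basis, so exactly $d$ valuations are attained. Finally, normalize: replace each $G_r$ by $G_r$ divided by its leading Fourier coefficient to make it unitary. This yields a unitary upper triangular basis $(E_{2k,N}^{(r)})_{0 \le r \le d-1}$ with $\nu(E_{2k,N}^{(r)}) = n_r$.

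\textbf{Invariance of the valuation sequence.} Let $(E^{(r)})_r$ and $(F^{(r)})_r$ be two unitary upper triangular bases, with valuation sequences $(n_r)_r$ and $(m_r)_r$. By the existence argument, both sequences consist of $d$ strictly increasing integers, and moreover I claim each equals the full set $S := \nu(M_{2k}(\Gamma_0(N)) \setminus \{0\})$ of attained valuations. Indeed $S$ was shown to have exactly $d$ elements; since $\{n_0,\ldots,n_{d-1}\} \subseteq S$ and both have cardinality $d$, they coincide, and likewise for $(m_r)_r$. Therefore $(n_r)_r = (m_r)_r$ as the unique increasing enumeration of $S$. This makes the valuation sequence an invariant of $M_{2k}(\Gamma_0(N))$, independent of the chosen basis.

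\textbf{Main obstacle.} The arithmetic is entirely routine; the only point requiring care is the cardinality count $|S| = d$, i.e. that the number of \emph{distinct} valuations attained by nonzero elements equals the dimension. The upper bound $|S| \le d$ is the triangularity/linear-independence argument above; the lower bound $|S| \ge d$ follows because a $d$-dimensional space cannot be spanned by fewer than $d$ forms and forms sharing a valuation are, after scaling and subtracting, reducible to fewer distinct valuations — made precise by a Gaussian-elimination-style induction on $d$. Once $|S| = d$ is established, both halves of the lemma are immediate. No use of the modular structure beyond finite-dimensionality and the existence of a well-defined $q$-expansion is needed here.
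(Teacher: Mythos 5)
Your proof is correct and is essentially an elaboration of the paper's own one-line argument (the paper simply invokes the ``Gauss process'' for existence and calls the invariance straightforward): your cardinality count $|S| = d_{2k}(N)$, with the upper bound by the triangularity independence argument and the lower bound by Gaussian elimination on valuations, is exactly what that process amounts to, and the invariance then follows as you say since any upper triangular basis must enumerate all of $S$ in increasing order. No changes needed.
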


\begin{proof}
The existence comes directly from the Gauss process. The result on valuations is straightforward.
\end{proof}
\ms

\begin{Thm}
Let $N\in\N^*$ such as there is a strong modular unit at level $N$. Let $\Phi_0$ be such a strong modular unit at level $N$ and minimum weight $2k_0$, then the other strong modular units of the sequence $(M_{2k}(\Gamma_0(N)))_{k\in \N^*}$ are exactly of the form $\alpha \Phi_0^n$ with $\alpha\in \C^*$ and $n\in \N^*$.
\end{Thm}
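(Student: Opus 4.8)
The plan is to exploit the divisor of a strong modular unit, viewed both on $\mathcal H/\Gamma_0(N)$ and at the cusps. Let $\Phi$ be any strong modular unit at level $N$ of weight $2k$, and let $\Phi_0$ be the given one of minimal weight $2k_0$. First I would observe that the ratio $f = \Phi/\Phi_0^{?}$ cannot be formed directly because the weights need not match; instead I would compare $\Phi^{k_0}$ and $\Phi_0^{k}$, both of weight $2kk_0$. The quotient $g = \Phi^{k_0}/\Phi_0^{k}$ is then a meromorphic modular function of weight $0$ for $\Gamma_0(N)$. By condition $(i)$ neither numerator nor denominator vanishes on $\mathcal H$, so $g$ is holomorphic and nonvanishing on $\mathcal H$; by conditions $(ii)$ and $(iii)$, both $\Phi$ and $\Phi_0$ have a zero of positive order at the infinite cusp and are nonzero (finite, nonzero limit after the appropriate automorphy normalization) at every other cusp of $\Gamma_0(N)$. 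Hence $g$ is a weight-$0$ meromorphic modular function on the compact Riemann surface $X_0(N)$ whose only possible zero or pole is at the cusp $\infty$.

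Next I would invoke the fact that a nonconstant modular function on the compact curve $X_0(N)$ must have at least one zero and at least one pole (degree of a principal divisor is $0$, and a nonconstant function is surjective onto $\mathbb P^1$). Since all zeros and poles of $g$ are concentrated at the single point $[\infty]$, the divisor of $g$ would be $m[\infty]$ with $m=0$, forcing $g$ to be constant. Therefore $\Phi^{k_0} = c\,\Phi_0^{k}$ for some $c\in\mathbb C^*$. Comparing orders of vanishing at $\infty$: writing $n_0 = \nu(\Phi_0)>0$ and $n=\nu(\Phi)>0$ in the sense of Definition~I-\ref{DefUMF} (these are the widths-normalized $q$-orders at $\infty$), we get $k_0\, n = k\, n_0$. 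Minimality of the weight $2k_0$ among strong modular units should be used here to conclude that $n_0$ divides $n$ and that $k = m k_0$, $n = m n_0$ for a positive integer $m$: indeed if $d=\gcd(k,k_0)$ then $\Phi^{k_0/d}$ and $\Phi_0^{k/d}$ again agree up to a constant, and $\Phi^{k_0/d}/\Phi_0^{(k/d)-1}$ — once one checks it is a genuine modular form, i.e. holomorphic at every cusp — is a strong modular unit of weight $2k k_0/d - 2k_0(k/d - 1) = 2k_0$ times a comparison that pins it to weight a multiple of $2k_0$, and minimality forces $d = k_0$.

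From $k = m k_0$ and $\Phi^{k_0} = c\,\Phi_0^{m k_0}$ I would extract $\Phi = \zeta\,\Phi_0^{m}$ with $\zeta^{k_0} = c$, i.e. $\Phi = \alpha\Phi_0^n$ with $\alpha\in\mathbb C^*$ and $n = m\in\mathbb N^*$; conversely every such $\alpha\Phi_0^n$ is visibly a strong modular unit (product of nonvanishing-on-$\mathcal H$ forms, vanishing at $\infty$, nonzero at the other cusps), which gives the reverse inclusion. The step I expect to be the main obstacle is the careful bookkeeping at the cusps needed to justify that $g$ (and the auxiliary forms used to pin down the weight) extends to a \emph{holomorphic} object on all of $X_0(N)$ with controlled divisor — one must transport conditions $(ii)$ and $(iii)$ through the coset representatives of $SL_2(\mathbb Z)/\Gamma_0(N)$ and track the cusp widths so that the orders of vanishing combine additively under multiplication; once that is in place, the compactness argument and the arithmetic with weights are routine.
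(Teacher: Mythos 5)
Your first step is correct and genuinely different from the paper's: you pass to $g=\Phi^{k_0}/\Phi_0^{k}$, a weight-$0$ function whose divisor on the compactified curve is supported at the single point $[\infty]$, and conclude $g$ is constant because a principal divisor has degree $0$. The paper instead avoids the compact-Riemann-surface machinery entirely: it performs the Euclidean division $k=qk_0+r$ with $0\le r<k_0$ and compares $\nu(\Phi)$ with $q\,\nu(\Phi_0)$, ruling out $\nu(\Phi)<q\nu(\Phi_0)$ (the quotient $\Phi_0^{q}\Phi^{-1}$ would be a nonzero form of weight $-2r\le 0$ vanishing at $\infty$) and $\nu(\Phi)>q\nu(\Phi_0)$ (the quotient $\Phi\Phi_0^{-q}$ would be a strong modular unit of weight $2r<2k_0$, contradicting minimality), so that $\Phi\Phi_0^{-q}$ is nowhere vanishing, including at the cusps, hence a nonzero constant. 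Both routes are legitimate, but the paper's gets divisibility of the weight and the constancy in one stroke.

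The genuine gap in your write-up is the step where you must deduce $k=mk_0$. Having obtained $\Phi^{k_0/d}=c'\,\Phi_0^{k/d}$ with $d=\gcd(k,k_0)$, the auxiliary object you form, $\Phi^{k_0/d}/\Phi_0^{(k/d)-1}$, is identically equal to $c'\Phi_0$; it has weight exactly $2k_0$ and therefore produces no conflict with minimality whatever the value of $d$. The assertion ``minimality forces $d=k_0$'' does not follow from anything you wrote. To close the gap along your route, set $a=k_0/d$, $b=k/d$ (coprime), note from $\Phi^{a}=c'\Phi_0^{b}$ that $a\,\nu(\Phi)=b\,\nu(\Phi_0)$, choose $u,v\in\Z$ with $ua+vb=1$, and consider $\Psi=\Phi^{v}\Phi_0^{u}$: it is holomorphic and nonvanishing on $\cal H$, has order $0$ at every cusp other than $\infty$, has weight $2d$ and valuation $u\nu(\Phi_0)+v\nu(\Phi)=\nu(\Phi_0)/a>0$ at $\infty$, hence is a strong modular unit of weight $2d\le 2k_0$; minimality then gives $d=k_0$, after which $\Phi=\alpha\Phi_0^{k/k_0}$ follows as you indicate. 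Alternatively, replace this whole second half by the paper's Euclidean-division argument, which needs no Bézout combination.
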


\begin{proof}
Let $\Phi$ be a modular unit of weight $2k$ with $k \se k_0$.

By euclidean division $k = qk_0 + r$, $0\ie r < k_0$.
If $\nu(\Phi) < q.\nu(\Phi_0)$, then $\Phi_0^{q} \Phi^{-1} \in M_{-2r}(\Gamma_0(N))$. This function would be infinitely cancelled and would therefore be null, this is impossible.

If $q.\nu(\Phi_0) < \nu(\Phi)$, then $\Phi \Phi_0^{-q} \in M_{2r}(\Gamma_0(N))$ would be a strong modular unit, which contradicts the minimality of $k_0$. Therefore $q.\nu(\Phi_0) = \nu(\Phi)$ and $\Phi \Phi_0^{-q}$ does not cancel out on $\cal H$ or in any cusp, it is a constant non-zero modular form.
\end{proof}
\ms

The following result gives the structure of the $(M_{2k}(\Gamma_0(N)))_{k\in\N^*}$  when you have a strong modular unit (which will always be the case).
\ms

\begin{Thm}\label{ThmStruct1}
Let's say $N\in \N^*$ and $\Phi$ a $2\ell$ strong modular unit with respect to $\Gamma_0(N)$. For $k\in \N^*$, we write $(E_{2k,N}^{(s)})_{0\ie s \ie d_{2k}(N)-1}$ an upper triangular basis of $M_{2k}(\Gamma_0(N))$. So

\begin{equation}
\forall k\in \N, \ k\se \ell, \ \ M_{2k}(\Gamma_0(N)) = \Phi.M_{2k-2\ell}(\Gamma_0(N)) \oplus Vect\pa{E_{2k,N}^{(s)} \ / \ \nu(E_{2k,N}^{(s)}) < \nu(\Phi)}.
\end{equation}
Therefore, if $k\in \N^*$ and $k = q\ell+r$ with $1\ie r \ie \ell$,
\begin{equation}
M_{2k}(\Gamma_0(N)) = \Phi^{q}. M_{2r}(\Gamma_0(N)) \bigoplus_{n=0}^{q-1} \Phi^n .Vect\pa{E_{2k-2n\ell,N}^{(s)} \ / \ \nu(E_{2k-2n\ell,N}^{(s)}) < \nu(\Phi)}.
\end{equation}
\end{Thm}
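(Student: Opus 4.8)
The plan is to establish the first displayed decomposition directly, then iterate it to get the second. First I would show that the sum $\Phi.M_{2k-2\ell}(\Gamma_0(N)) + Vect(E_{2k,N}^{(s)} \ / \ \nu(E_{2k,N}^{(s)}) < \nu(\Phi))$ is direct. If $\Phi.\Psi = \sum_s \lambda_s E_{2k,N}^{(s)}$ with $\Psi \in M_{2k-2\ell}(\Gamma_0(N))$ and the sum on the right restricted to indices with $\nu(E_{2k,N}^{(s)}) < \nu(\Phi)$, then the right-hand side, if nonzero, has valuation strictly less than $\nu(\Phi)$ (the valuations of the $E_{2k,N}^{(s)}$ are strictly increasing in $s$, so the minimal one among those appearing is still $<\nu(\Phi)$), whereas the left-hand side has valuation $\geq \nu(\Phi)$ since $\nu(\Phi.\Psi)=\nu(\Phi)+\nu(\Psi)\geq \nu(\Phi)$. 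Hence both sides vanish, and since $\Phi$ does not vanish on $\cal H$ nor at any cusp other than $i\infty$ and is not identically zero, $\Phi.\Psi=0$ forces $\Psi=0$.

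Next I would prove the sum exhausts $M_{2k}(\Gamma_0(N))$. Take $\Phi_0 \in M_{2k}(\Gamma_0(N))$. Using the upper triangular basis, write $\Phi_0 = \sum_s \lambda_s E_{2k,N}^{(s)}$; split this as $\Phi_0 = A + B$ where $A$ collects the terms with $\nu(E_{2k,N}^{(s)}) < \nu(\Phi)$ and $B$ collects those with $\nu(E_{2k,N}^{(s)}) \geq \nu(\Phi)$. It suffices to show $B \in \Phi.M_{2k-2\ell}(\Gamma_0(N))$, i.e. that $B/\Phi$ is a modular form of weight $2k-2\ell$. The function $B/\Phi$ is meromorphic on $\cal H$ and weakly modular of the correct weight; it is holomorphic on $\cal H$ because $\Phi$ does not vanish there; at the cusp $i\infty$, $\nu(B)\geq \nu(\Phi)$ (each contributing basis element has valuation $\geq \nu(\Phi)$, hence so does their combination) gives $\nu(B/\Phi)=\nu(B)-\nu(\Phi)\geq 0$, so $B/\Phi$ has a finite limit there; at every other cusp $\Phi$ does not vanish while $B$, being holomorphic, has a finite limit, so $B/\Phi$ does too. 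Therefore $B/\Phi \in M_{2k-2\ell}(\Gamma_0(N))$ and $B \in \Phi.M_{2k-2\ell}(\Gamma_0(N))$. This proves the first decomposition.

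For the second formula, write $k = q\ell + r$ with $1 \ie r \ie \ell$ and apply the first decomposition repeatedly: $M_{2k}(\Gamma_0(N)) = \Phi.M_{2k-2\ell}(\Gamma_0(N)) \oplus V_k$ where $V_k = Vect(E_{2k,N}^{(s)} \ / \ \nu(E_{2k,N}^{(s)}) < \nu(\Phi))$, then expand $M_{2k-2\ell}(\Gamma_0(N))$ the same way, and so on, $q$ times in total, each application reducing the weight index by $\ell$; after $q$ steps the remaining term is $\Phi^q.M_{2r}(\Gamma_0(N))$ and the accumulated summands are $\Phi^n.V_{k-n\ell}$ for $0 \ie n \ie q-1$. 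The directness of the full sum follows from the directness of each individual step together with the injectivity of multiplication by $\Phi$ (again because $\Phi$ is not a zero divisor in the graded ring of modular forms): if a relation $\Phi^q\Psi + \sum_{n=0}^{q-1}\Phi^n w_n = 0$ held with $w_n \in V_{k-n\ell}$, looking at the lowest power of $\Phi$ and dividing out gives $w_0 \in \Phi.M_{\ast}(\Gamma_0(N)) \cap V_k = \{0\}$ by the first decomposition, and one proceeds inductively.

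The main obstacle is the cusp analysis in the surjectivity step: one must check carefully that $B/\Phi$ extends holomorphically to \emph{every} cusp, which relies precisely on the defining property of a strong modular unit that $\Phi$ does not vanish at any finite cusp (so division does not create a pole there), while at $i\infty$ it is the valuation bookkeeping $\nu(B)\geq \nu(\Phi)$ that saves the day. The rest is linear algebra and the non-vanishing of $\Phi$ on $\cal H$.
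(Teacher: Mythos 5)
Your proposal is correct and follows essentially the same route as the paper: the paper's proof also rests on the map $\Psi\mapsto \Psi/\Phi$ being an isomorphism from the span of basis elements of valuation $\se \nu(\Phi)$ onto $M_{2k-2\ell}(\Gamma_0(N))$, with the non-vanishing of $\Phi$ at the finite cusps (condition $(iii)$ of the definition of a strong modular unit) doing exactly the work you identify. Your write-up merely makes explicit the directness check and the iteration that the paper leaves to the reader.
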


\begin{proof}
As in the $N = 1$ case, the result comes from isomorphism :
\ms

\begin{tabular}{ccl}
$\phi : Vect\pa{E_{2k,N}^{(s)} \ / \ \nu(E_{2k,N}^{(s)}) \se \nu(\Phi)}$ & $\to$ & $M_{2k-2\ell}(\Gamma_0(N))$\\
$\Psi$ & $\mapsto$ & $\ds \Psi/\Phi$.
\end{tabular}

The $(iii)$ condition of the I-\ref{DefUMF} definition is essential here. For $\Psi\in M_{2k}(\Gamma_0(N))$ with $\nu(\Psi) \se \nu(\Phi)$, the quotient $\ds \Psi/\Phi$ admits a finite limit at any cusp other than infinity, which makes it possible to verify the $(ii)$ condition of definition \ref{Def3}.
\end{proof}
\bs

The objective is nevertheless to provide \textit{concrete and calculable results}. Theorem I-\ref{ThmStruct1} does not meet these criteria until we know how to calculate the elements of $\ac{E_{2k,N}^{(s)} \ / \nu(E_{2k,N}^{(s)}) < \nu(\Phi)}$, and especially until the existence of $\Phi$ is established.
\ms

To build the strong modular units, the essential tool will be Dedekind's $\eta$ function of which we recall some properties.
\bs
\ms

\section{-- Reminders on Dedekind's $\eta$ function}
\ms

With elliptical functions (Weierstrass or Jacobi), the $\eta$ function of Dedekind is the essential tool to build modular functions and forms.  Rademacher \cite{RadRam} inaugurates the construction of modular functions (of weight $0$) with respect to $\Gamma_0(p)$, $p$ prime, starting from $\eta$. But it is Newman \cite{New1}, \cite{New2} who establishes a first general result allowing to build a (weakly) modular function with respect to $\Gamma_0(N)$ starting from $\eta$. This was followed by works extending these results to the modular forms with respect to $\Gamma_0(N)$ that interest us here \cite{Ligo}, \cite{Ono} or \cite{Kohl}. The following results are, essentially, from \cite{Apos} and \cite{Kohl}.\ms

We define Dedekind's function, weight $\frac{1}{2}$, by asking \cite{Apos} :
\begin{equation*}
\forall \tau \in {\cal H}, \ \ \eta(\tau ) = e^{i\pi \tau /12}\prod_{n=1}^{+\infty}(1-q^n).
\end{equation*}

\begin{Def}
Let $N$ be a positive integer. We call $\eta$-quotient at level $N$, any function of the type
\begin{equation}
\forall \tau\in {\cal H}, \ \ \Phi(\tau) = \prod_{m\mid N} \eta(m\tau)^{a_m} \label{Poids}
\end{equation}
where $(a_1,\ldots,a_N)$ is a sequence of relative integers indexed by the positive divisors of $N$.
\end{Def}

The relationship $(\ref{Poids})$ shows that if $\Phi$ is modular, its weight is necessarily $2k = \frac{1}{2}\sum_{m\mid N} a_m$, and in this case $\nu(\Phi) = \frac{1}{24} \sum_{m\mid N} m a_m \in \N^*$.
\bs

The following results will avoid many calculations in future demonstrations. They are derived from the modular properties of the $\eta$ function and are found in various forms. The initial sources are  \cite{New1} Theorem 1,  \cite{Ligo} Proposition 3.2.1 and finally \cite{Kohl} Corollary 2.3, and \cite{Ono} Theorem 1.64.
\ms

\begin{Thm}\label{ThUMF0}
Let $\ds \Phi(\tau) = \prod_{m\mid N} \eta(m\tau)^{a_m}$ be a $N$-level $\eta$-quotient. For a divisor $m$ of $N$, we note $m' = N/m$.
If the function $\Phi$ verifies
\be
\item[(i)]
\begin{equation}
\prod_{m\mid N} m'^{a_m} \in \Q^2.
\end{equation}

\item[(ii)]
\begin{equation}
\frac{1}{24} \sum_{m\mid N} m a_m \in \Z \label{K1}
\end{equation}

\item[(iii)]
\begin{equation}
\frac{1}{24} \sum_{m\mid N} m' a_m \in \Z \label{K2}
\end{equation}
\ee
So $\Phi$ is a weakly modular form (that is, vérifies $(\ref{MF})$) with respect to $\Gamma_0(N)$ for weight $2k = \frac{1}{2}\sum_{m\mid N} a_m$.
\end{Thm}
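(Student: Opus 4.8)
The plan is to reduce everything to the classical functional equation of $\eta$ under $SL_2(\Z)$ and then to show that the root of unity it produces is trivial precisely under hypotheses $(i)$--$(iii)$; this is the route followed in \cite{Apos} and \cite{Kohl}. Recall that for $\gamma = \begin{pmatrix} a & b\\ c & d\end{pmatrix}\in SL_2(\Z)$ with $c>0$ one has
\begin{equation*}
\eta\pa{\frac{a\tau+b}{c\tau+d}} = \varepsilon(\gamma)\,\ac{-i(c\tau+d)}^{1/2}\eta(\tau),
\end{equation*}
where $\varepsilon(\gamma)$ is a $24$-th root of unity of the form $\exp\pa{i\pi\pa{\frac{a+d}{12c}-s(d,c)-\frac14}}$, $s(d,c)$ the Dedekind sum, the case $c=0$ being simply $\eta(\tau+b)=e^{i\pi b/12}\eta(\tau)$.
\ms

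First I would perform the substitution that puts each factor $\eta(m\tau)$ into this setting. Fix $\gamma\in\Gamma_0(N)$ and a divisor $m\mid N$; since $m\mid N\mid c$ the matrix $\gamma_m = \begin{pmatrix} a & bm\\ c/m & d\end{pmatrix}$ lies in $SL_2(\Z)$, and one checks directly that $m\cdot\gamma\tau = \gamma_m\cdot(m\tau)$ and $(c/m)(m\tau)+d = c\tau+d$. Applying the formula above to each $\gamma_m$ and taking the product over $m\mid N$ yields, after absorbing the resulting power of $-i$ (which is legitimate because $2k=\frac12\sum_{m\mid N}a_m$ is an even integer),
\begin{equation*}
\Phi(\gamma\tau) = v(\gamma)\,(c\tau+d)^{2k}\,\Phi(\tau),
\end{equation*}
where $v(\gamma)$ is a root of unity depending only on $\gamma$. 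A short cocycle computation (the automorphy factor $(c\tau+d)^{2k}$ being an honest integral-weight cocycle) shows $v:\Gamma_0(N)\to\C^*$ is a homomorphism, so it suffices to prove $v\equiv 1$ on a generating set of $\Gamma_0(N)$.
\ms

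The heart of the matter is then to extract $(i)$--$(iii)$ from $v\equiv 1$. On the translation $\begin{pmatrix}1&1\\0&1\end{pmatrix}$ one gets $v = \exp\pa{2i\pi\cdot\tfrac{1}{24}\sum_{m\mid N}m a_m}$, trivial exactly when $(ii)$ holds. Transforming $\Phi$ by the Fricke involution $\tau\mapsto -1/(N\tau)$ replaces it, up to an explicit elementary factor, by the $\eta$-quotient with exponents $a_m$ replaced by $a_{N/m}$, and its translation condition is, after reindexing $m\mapsto N/m$, precisely $(iii)$. What then survives these two integrality constraints is a sign coming from the branches $\ac{-i(\cdot)}^{1/2}$ together with the Dedekind-sum parts $s(d,c/m)$ of the $\varepsilon(\gamma_m)$; via reciprocity for Dedekind sums and quadratic reciprocity this collapses to a Jacobi symbol in the integers $c/m$, which is identically $1$ for $\gamma\in\Gamma_0(N)$ exactly when $\prod_{m\mid N}(N/m)^{a_m}$ is a rational square, i.e. condition $(i)$.
\ms

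The step I expect to be the genuine obstacle is this last one: faithfully bookkeeping the Dedekind sums $s(d,c/m)$ and the square-root branches so that the accumulated root of unity reduces cleanly to the Jacobi symbol governed by $(i)$. This is the classical computation underlying Newman's and Ligozat's criteria, and in a full write-up I would either reproduce it following \cite{Apos} (functional equation of $\eta$ and Dedekind sums) or simply cite \cite{Kohl} Corollary 2.3 and \cite{Ono} Theorem 1.64, where it is carried out in detail.
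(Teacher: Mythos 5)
Your sketch is consistent with how the paper handles this statement: the paper gives no proof at all, presenting the theorem as a known result imported from Newman, Ligozat, K\"ohler and Ono, which are exactly the sources your outline reconstructs (the $\eta$ multiplier system, the substitution $\gamma_m$, triviality on translations giving $(ii)$, the Fricke conjugation giving $(iii)$, and the Dedekind-sum/Jacobi-symbol reduction giving $(i)$). Your sketch correctly identifies the one genuinely laborious step and defers it to the same references the paper cites, so it is essentially the same approach.
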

\ms

\begin{Thm}
For $r = -\frac{d}{c}\in \Q$ with $gcd(c,d)=1$, the cancellation order of $\ds \Phi(\tau) = \prod_{m\mid N} \eta(m\tau)^{a_m}$ at the cusp $r$ is defined by

\be
\item[]
\begin{equation}
\text{ord}(\Phi,r) = \frac{N}{24} \sum_{m\mid N} \frac{gcd(c,m)^2}{m} a_m. \label{K5}
\end{equation}
\ee

The function $\Phi$ allows a limit at the cusp $r$ if and only if $\text{ord}(\Phi,r) \se 0$ and $\Phi$ cancels at this cusp if and only if $\text{ord}(\Phi,r) > 0$.\ms

Therefore, under the assumptions $(i)$, $(ii)$ and $(iii)$ of Theorem I-\ref{ThUMF0}, and if for any cusp $r = -\frac{d}{c}\in \Q$ we have $\text{ord}(\Phi,r) \se 0$, then $\Phi\in M_{2k}(\Gamma_0(N))$.
\end{Thm}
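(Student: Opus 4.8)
The strategy is to reduce the whole statement to the classical transformation behaviour of $\eta$ under $SL_2(\Z)$, combined with one integer-matrix factorisation per divisor $m\mid N$, and then to extract the leading term of the $q$-expansion of $\Phi$ in the local uniformiser at each cusp. Concretely, fix a cusp $r = -d/c$ with $gcd(c,d) = 1$, complete $(c,d)$ to a matrix $A = \begin{pmatrix} a & b\\ c & d\end{pmatrix}\in SL_2(\Z)$, and recall that by condition $(ii)''$ following Definition I-\ref{Def3} the behaviour of $\Phi$ at $r$ is read off from $(c\tau+d)^{-k}\Phi\pa{\frac{a\tau+b}{c\tau+d}}$ as $\tau\to -d/c$, i.e. as $w := \frac{a\tau+b}{c\tau+d}\to i\infty$. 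The only analytic input needed is: for $\gamma = \begin{pmatrix} * & *\\ \gamma_3 & \gamma_4\end{pmatrix}\in SL_2(\Z)$ one has $\eta(\gamma\tau) = \varepsilon(\gamma)\,(\gamma_3\tau+\gamma_4)^{1/2}\,\eta(\tau)$ with $\abs{\varepsilon(\gamma)} = 1$, and $\eta$ is holomorphic and nowhere zero on $\cal H$ (immediate from the product formula).

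Next, for each $m\mid N$, I would write the integer matrix $\begin{pmatrix} m & 0\\ 0 & 1\end{pmatrix} A$, of determinant $m$, and use a B\'ezout-type reduction (possible since $gcd(c,d) = 1$) to factor it as $B_m\begin{pmatrix} \alpha_m & \beta_m\\ 0 & \delta_m\end{pmatrix}$ with $B_m\in SL_2(\Z)$, $\alpha_m\delta_m = m$, and $\alpha_m,\delta_m$ governed by $g_m := gcd(c,m)$. Applying the transformation law to $B_m$ peels off a factor of modulus one and leaves $\eta\pa{\frac{\alpha_m\tau+\beta_m}{\delta_m}} = e^{i\pi(\alpha_m\tau+\beta_m)/(12\delta_m)}\prod_{n=1}^{\infty}\pa{1-e^{2\pi i n(\alpha_m\tau+\beta_m)/\delta_m}}$, whose leading term is a root of unity times $q^{\alpha_m/(24\delta_m)}$. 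Raising to the power $a_m$, taking the product over $m$, and combining the resulting automorphy factors with $(c\tau+d)^{-k}$ — here is where hypotheses $(i)$, $(ii)$, $(iii)$ of Theorem I-\ref{ThUMF0} enter, to cancel the residual root of unity and settle the half-integral powers, exactly as in the proof of weak modularity — one finds that, expressed in the local uniformiser at $r$ (which is where the global factor $N$ appears), $(c\tau+d)^{-k}\Phi\pa{\frac{a\tau+b}{c\tau+d}}$ is an honest power series in that uniformiser with nonzero leading coefficient and leading exponent $\frac{N}{24}\sum_{m\mid N}\frac{gcd(c,m)^2}{m}a_m$. Since this is the exponent of such a series, $\Phi$ admits a limit at $r$ precisely when this number is $\se 0$ and vanishes there precisely when it is $> 0$.

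The last assertion is then a direct corollary. Under $(i)$, $(ii)$, $(iii)$, Theorem I-\ref{ThUMF0} already gives that $\Phi$ satisfies $(\ref{MF})$; $\Phi$ is holomorphic on $\cal H$ as a finite product of powers of the functions $\tau\mapsto\eta(m\tau)$; and the assumption $\text{ord}(\Phi,r)\se 0$ for every cusp $r$, via the equivalence just established, yields a finite limit at each cusp, which is condition $(ii)$ of Definition I-\ref{Def3}. Hence $\Phi\in M_{2k}(\Gamma_0(N))$. The step I expect to be the genuine obstacle is the matrix bookkeeping of the middle paragraph: pinning down $\alpha_m$ and $\delta_m$ in terms of $gcd(c,m)$, reconciling the rescaling to the local uniformiser so that the overall constant is exactly $N/24$, and simultaneously verifying that $\prod_m\varepsilon(B_m)^{a_m}$ together with the half-integral exponents recombines cleanly with $(c\tau+d)^{-k}$ — this last point, which is precisely what hypothesis $(i)$ controls, is the delicate one.
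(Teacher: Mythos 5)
The paper does not actually prove this statement: it is recalled from the literature (Newman, Ligozat, K\"ohler, Ono) with citations only, so there is no in-paper argument to compare against. Your sketch reconstructs the standard proof from those sources, and its skeleton is sound: complete $(c,d)$ to $A=\begin{pmatrix} a & b\\ c & d\end{pmatrix}\in SL_2(\Z)$, factor $\begin{pmatrix} m & 0\\ 0 & 1\end{pmatrix}A = B_m\begin{pmatrix}\alpha_m & \beta_m\\ 0 & \delta_m\end{pmatrix}$ with $B_m\in SL_2(\Z)$; since $ad-bc=1$ forces $\gcd(a,c)=1$, the bookkeeping you worry about closes cleanly with $\alpha_m=\gcd(c,m)$ and $\delta_m=m/\gcd(c,m)$, the automorphy factor of $B_m$ contributes $(c\tau+d)^{1/2}\delta_m^{-1/2}$ times a unimodular constant (because $r_m\alpha_m=c$ and $r_m\beta_m+s_m\delta_m=d$), and $(c\tau+d)^{-k}\Phi(A\tau)$ comes out as a nonzero constant times a series whose leading exponent in $q=e^{2i\pi\tau}$ is $\sum_{m\mid N}\gcd(c,m)^2 a_m/(24m)$. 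Two caveats. First, passing to the local uniformiser multiplies this exponent by the cusp width $N/\gcd(c^2,N)$, not by $N$; the quantity $\text{ord}(\Phi,r)$ of the statement is therefore the true vanishing order times the positive factor $\gcd(c^2,N)$ (this is Ligozat's normalisation; compare the extra $\gcd(d,N/d)\,d$ in the denominator of Ono's Theorem 1.64). Since only the sign is ever used, the stated equivalences and the final inclusion are unaffected, but your assertion that $\frac{N}{24}\sum_{m\mid N}\frac{\gcd(c,m)^2}{m}a_m$ is literally the leading exponent in the local uniformiser is off by that positive factor. Second, hypotheses $(i)$, $(ii)$, $(iii)$ of Theorem I-\ref{ThUMF0} play no role in the order computation at a single cusp: the constant $\prod_m\varepsilon(B_m)^{a_m}\delta_m^{-a_m/2}$ is nonzero whatever those hypotheses say, so the middle paragraph should not appeal to them to ``cancel the residual root of unity''; they are needed only to make the multiplier system trivial on $\Gamma_0(N)$, i.e.\ for weak modularity, which is exactly where your last paragraph (correctly) invokes them.
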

\ms

As noted in \cite{Kohl}, the behavior of $\Phi$ at the cusp $-d/c$ depends only on $c$. Since $gcd(c,m) = gcd(gcd(c,N),m)$ for any divisor $m$ of $N$, we deduce that it is enough to check the condition $\text{ord}(\Phi,r) \se 0$ at the cusps $r=1/c$ for the divisors $c$ of $N$, $1\ie c\ie N$.

The $\text{ord}(\Phi,\frac{1}{c}) = 0$ condition, for the values $1\ie c \ie N-1$, indicates the non nullity of $\Phi$ at all rational cusps. The $\text{ord}(\Phi,\frac{1}{N}) > 0$ condition indicates that $\Phi$ is cancelled at the infinite cusp, because $I_2$ and $\begin{pmatrix} 1 & 0\\ N & 1\end{pmatrix}$ are two representatives of the $\Gamma_0(N)$ class. Finally, we have the following result.
\ms

\begin{Thm}\label{ThUMF}
Let $\ds \Phi(\tau) = \prod_{m\mid N} \eta(m\tau)^{a_m}$ a $\eta$-quotient at level $N$ such that :
\be
\item[(i)]
\begin{equation}
\prod_{m\mid N} m'^{a_m} \in \Q^2 \label{UMF1}
\end{equation}

\item[(ii)]
\begin{equation}
\frac{1}{24}\sum_{m\mid N} ma_m \in \N^*.\label{UMF2}
\end{equation}

\item[(iii)]
\begin{equation}
\forall c\in \llbracket1,N-1\rrbracket, \ \ \sum_{m\mid N} \frac{gcd(c,m))^2}{m} a_m = 0\label{UMF3}
 \end{equation}
\ee

So the $\Phi$ function is a strong modular unit at level $N$ and $\ds 2k = \frac{1}{2}\sum_{m\mid N} a_m$.
\end{Thm}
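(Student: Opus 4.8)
The plan is to deduce Theorem I-\ref{ThUMF} from the preceding two theorems, since conditions (i)--(iii) here are tailored to reproduce the hypotheses appearing there. First I would check that (i) and (ii) of the present statement, together with (iii), imply conditions (i), (ii), (iii) of Theorem I-\ref{ThUMF0}, so that $\Phi$ is weakly modular of weight $2k = \frac12\sum_{m\mid N} a_m$ with respect to $\Gamma_0(N)$. Condition (i) here is verbatim condition (i) there. Condition (ii) here, $\frac{1}{24}\sum_{m\mid N} m a_m \in \N^*$, is in particular in $\Z$, which is (ii) of Theorem I-\ref{ThUMF0}. For (iii) of Theorem I-\ref{ThUMF0}, i.e. $\frac{1}{24}\sum_{m\mid N} m' a_m \in \Z$ with $m' = N/m$: I would obtain this by applying the cusp-order formula $(\ref{K5})$ at the cusp $r = -1/N$ (i.e. $c = N$), where $\gcd(N,m) = m$, giving $\text{ord}(\Phi,-1/N) = \frac{N}{24}\sum_{m\mid N} \frac{m^2}{m} a_m = \frac{N}{24}\sum_{m\mid N} m a_m$... — actually the cleaner route is $c=1$: then $\gcd(1,m)=1$ and $\text{ord}(\Phi, 1) = \frac{N}{24}\sum_{m\mid N}\frac{1}{m} a_m = \frac{1}{24}\sum_{m\mid N} m' a_m$, and this is an integer because $\text{ord}(\Phi,r)$, being the $q$-expansion order of $\Phi$ at a cusp of a weakly modular form, is automatically an integer once one knows $\Phi$ has a $q$-expansion there — but that is slightly circular. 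Instead I would invoke that conditions $(\ref{K1})$ and $(\ref{K2})$ of Theorem I-\ref{ThUMF0} are symmetric under $m \leftrightarrow m'$ (the Fricke involution $\tau \mapsto -1/(N\tau)$ swaps $\eta(m\tau)$-type factors), and derive $(\ref{K2})$ from $(\ref{UMF2})$ combined with $(\ref{UMF3})$ by a direct arithmetic manipulation; this is the one genuinely computational point and I expect it to be short but fiddly.

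Next, with weak modularity in hand, I would invoke the second theorem (the cusp-order theorem) to upgrade to genuine modularity and then to the strong-modular-unit property. By the remark following that theorem, $\text{ord}(\Phi, -d/c)$ depends only on $\gcd(c,N)$, so it suffices to examine the cusps $r = 1/c$ (equivalently $-1/c$) for divisors $c$ of $N$ with $1 \le c \le N$. For $1 \le c \le N-1$ a proper divisor, condition $(\ref{UMF3})$ gives exactly $\sum_{m\mid N} \frac{\gcd(c,m)^2}{m} a_m = 0$, hence $\text{ord}(\Phi, 1/c) = 0$: so $\Phi$ has a finite, nonzero limit at every such cusp — this establishes conditions (i) (no zeros on $\mathcal H$, inherited from the $\eta$-product being a nonvanishing holomorphic function on $\mathcal H$) and (iii) (non-cancellation at every finite cusp) of the definition of a strong modular unit. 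For $c = N$: $\text{ord}(\Phi, 1/N) = \frac{N}{24}\sum_{m\mid N} \frac{m^2}{m} a_m = \frac{N}{24}\sum_{m\mid N} m a_m = N\cdot\big(\frac{1}{24}\sum_{m\mid N} m a_m\big) > 0$ by $(\ref{UMF2})$; since $I_2$ and $\begin{pmatrix} 1 & 0\\ N & 1\end{pmatrix}$ represent the same $\Gamma_0(N)$-coset, this is the order of vanishing at the infinite cusp, so condition (ii) (cancellation at $i\infty$) holds, and moreover $\text{ord}(\Phi, 1/N) \ge 0$ so the last hypothesis of the cusp-order theorem is met and $\Phi \in M_{2k}(\Gamma_0(N))$.

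Finally I would assemble these observations: $\Phi$ is holomorphic and nonvanishing on $\mathcal H$ (immediate from the infinite-product definition of $\eta$, which converges and has no zeros on $\mathcal H$), lies in $M_{2k}(\Gamma_0(N))$ with $2k = \frac12\sum_{m\mid N} a_m$ by the above, vanishes at $i\infty$, and does not vanish at any other cusp — which is precisely the definition of a strong modular unit at level $N$ and weight $2k$. The main obstacle is really just bookkeeping: making sure the integrality condition $(\ref{K2})$ of Theorem I-\ref{ThUMF0} is genuinely a consequence of $(\ref{UMF2})$ and $(\ref{UMF3})$ rather than an independent assumption, and correctly identifying which cusp's order formula yields it; everything else is a direct transcription of the two quoted theorems.
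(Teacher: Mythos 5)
Your proposal is correct and follows essentially the same route as the paper: reduce to Theorem I-\ref{ThUMF0} for weak modularity, then read the behaviour at each cusp off the order formula, using the reduction to $c\mid N$ and the coset identification of the infinite cusp. The one step you flag as ``short but fiddly'' --- deriving $(\ref{K2})$ --- is in fact immediate and needs no Fricke symmetry: condition $(\ref{UMF3})$ with $c=1$ gives $\sum_{m\mid N} a_m/m = 0$, hence $\frac{1}{24}\sum_{m\mid N} m' a_m = \frac{N}{24}\sum_{m\mid N} a_m/m = 0 \in \Z$, which is exactly the observation you already use in your second paragraph when concluding $\text{ord}(\Phi,1/c)=0$ (and for $N=1$ condition (iii) is vacuous, but then $m'=m$ and $(\ref{K2})$ coincides with $(\ref{K1})$).
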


\begin{proof}
For such a function $\Phi$, the condition $(ii)$ of Theorem I-\ref{ThUMF0} is deducted from the condition $(ii)$ above, and the condition $(iii)$ of Theorem I-\ref{ThUMF0} is deducted from the condition $(iii)$ above.
\ms

The condition $(ii)$ expresses the cancellation of $\Phi$ in the infinite cusp while giving the order of $\Phi$ to infinity (i.e. its valuation). Condition $(iii)$ indicates no nullity of $\Phi$ at any cusp other than the infinite cusp.
\end{proof}
\ms

We will use Theorem I-\ref{ThUMF} to construct, in paragraph $4$, a modular unit $\Delta_p$ when the level $p$ is prime. This will result in a more precise and operational version of Theorem I-\ref{ThmStruct1} in paragraph $5$. The results obtained for $p$ prime will be extended to paragraphs $6$ and $7$ at any level $N$.
\bs
\bs

\section{-- Strong modular units $\Delta_p$, $p$ prime}
\ms

Let's start by building strong unitary modular units of minimum weight for $p=$2 and $p=$3, these cases being exceptions.
\bs

\textbf{$\bullet$ The $p=2$ case.}
\ms

The space $M_2(\Gamma_0(2))$ is $1$ and generated by a form $E_{2,2}^{(0)}(\tau) = 1 + O(q)$. This is a classic result that will again be established in Part II. This excludes the existence of a strong modular form of $2$ weight that must cancel each other out in the infinite cusp.

\begin{Thm} The function
\begin{equation}
\begin{array}{lcl}
\ds \Delta_2(\tau) & = & \eta(2\tau)^{16} \eta(\tau)^{-8} \\
 & = & \ds q \prod_{k=1}^{+\infty} \frac{(1-{q}^{2k})^{16}}{(1-{q}^{k})^{8}}
\end{array}
\end{equation}
belongs to $M_4(\Gamma_0(2))$, it is a strong modular unit of minimum weight for level $2$.
\end{Thm}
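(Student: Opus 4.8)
The plan is to verify that $\Delta_2(\tau) = \eta(2\tau)^{16}\eta(\tau)^{-8}$ satisfies the three hypotheses of Theorem I-\ref{ThUMF} with $N=2$, so that it is automatically a strong modular unit at level $2$, and then separately to argue the minimality of the weight $4$. Here $N=2$ has divisors $m\in\{1,2\}$, with $m'=N/m$ giving $1'=2$ and $2'=1$; the exponent vector is $(a_1,a_2)=(-8,16)$.

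First I would check condition $(i)$: $\prod_{m\mid N} m'^{a_m} = 2^{-8}\cdot 1^{16} = 2^{-8} = (2^{-4})^2 \in \Q^2$, which holds. Next, condition $(ii)$: $\frac{1}{24}\sum_{m\mid N} m a_m = \frac{1}{24}(1\cdot(-8) + 2\cdot 16) = \frac{1}{24}(24) = 1 \in \N^*$; this simultaneously confirms that $\Delta_2$ vanishes at the infinite cusp and that its valuation is $\nu(\Delta_2)=1$, consistent with the stated $q$-expansion $q\prod(1-q^{2k})^{16}(1-q^k)^{-8}$. Then condition $(iii)$ requires, for each $c\in\llbracket 1,N-1\rrbracket = \{1\}$, that $\sum_{m\mid N}\frac{\gcd(c,m)^2}{m}a_m = 0$; for $c=1$ this is $\frac{\gcd(1,1)^2}{1}(-8) + \frac{\gcd(1,2)^2}{2}(16) = -8 + \frac{1}{2}\cdot 16 = -8+8 = 0$, as needed. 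By Theorem I-\ref{ThUMF}, $\Delta_2$ is then a strong modular unit at level $N=2$ of weight $2k = \frac12\sum_{m\mid N}a_m = \frac12(-8+16) = 4$, so $\Delta_2 \in M_4(\Gamma_0(2))$; I would also note the weight is even, $2k=4$ with $k=2$, matching the even-weight convention.

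For minimality, I would argue that there is no strong modular unit of weight $2$ at level $2$. A strong modular unit of weight $2$ would lie in $M_2(\Gamma_0(2))$, which (by the cited classical fact, to be reproved in Part II) is one-dimensional, spanned by $E_{2,2}^{(0)} = 1 + O(q)$, a form with nonzero constant term and hence nonzero limit $1$ at the infinite cusp. But condition $(ii)$ in the definition of a modular unit demands vanishing at the infinite cusp, so no nonzero element of $M_2(\Gamma_0(2))$ can be a modular unit; weight $0$ gives only constants, which do not vanish at $i\infty$ either unless zero. Hence $4$ is the minimal weight and $\Delta_2$ realizes it.

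The main obstacle here is not the $\eta$-quotient bookkeeping — which is a short finite check — but the input that $\dim M_2(\Gamma_0(2)) = 1$ with the generator having nonvanishing constant term; this is what rules out a weight-$2$ strong modular unit and is the only non-formal ingredient. Since the excerpt explicitly defers this dimension computation to Part II and flags it as classical, I would simply cite it at this point. Everything else reduces to substituting the vector $(a_1,a_2)=(-8,16)$ into the three displayed congruence/square conditions of Theorem I-\ref{ThUMF} and reading off the weight and valuation from the accompanying formulas.
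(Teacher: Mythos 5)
Your proposal is correct and follows essentially the same route as the paper: the author also verifies the three hypotheses of Theorem I-\ref{ThUMF} with $(a_1,a_2)=(-8,16)$ to get a strong modular unit of weight $4$, and handles minimality (in the paragraph immediately preceding the theorem) exactly as you do, by noting that $M_2(\Gamma_0(2))$ is one-dimensional, generated by $E_{2,2}^{(0)}=1+O(q)$, which cannot vanish at the infinite cusp. No gaps; your write-up is if anything slightly more explicit about the cusp check for $c=1$ and the exclusion of weight $0$.
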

\ms

\begin{proof}
The $\Delta_2$ function is a $\eta$-quotient at level $N=2$, with divisors $m\in \{1,2\}$ with $a_1 = -8$ and $a_2 = 16$.

The function $\Delta_2$ of $2k = \frac{1}{2}(a_1+a_2) = 4$ verifies the hypotheses of the application of Theorem I-\ref{ThUMF} :
\begin{equation*}
\prod_{m\mid 2} m'^{a_m} = 2^{-8} \in \Q^2, \ \ \frac{1}{24}\sum_{m\mid 2} m a_m = 1\in \N^* \ \ \text{and} \ \ \sum_{m\mid 2} \frac{a_m}{m} = 0.
\end{equation*}
\end{proof}
\bs

\textbf{$\bullet$ The $p=3$ case.}
\ms

The one dimensional space $M_2(\Gamma_0(3))$ is generated by $E_{2,3}^{(0)}(\tau) = 1+12q+O(q^2)$. There is no strong modular unit in this space.
\ms

The two dimensional $M_4(\Gamma_0(3))$ space does not contain a strong modular unit. Indeed, we can choose $E_{4,3}^{(0)} =[E_{2,3}^{(0)}]^2 = 1+24q + O(q^2)$, but we also know an element of $M_2(\Gamma_0(3))$ built from the series of Eisenstein $E_4$, namely $E_4(3\tau) = 1+240q^3 + O(q^6)$.
\ms

We deduce from these two linearly independent modular forms that $E_{4,3}^{(1)}$ is of valuation $1$ (and unique if unitary).
This function could be a strong modular unit, but using division, we should have $\dim(M_6(\Gamma_0(3))) = 2$ which is wrong, the space is $3$. We then have the following result.

\begin{Thm}
The function
\begin{equation}
\begin{array}{lcl}
\Delta_3(\tau) & = & \ds \eta(3\tau)^{18} \eta(\tau)^{-6}\\
  & = & \ds q^{2} \prod_{k=1}^{+\infty} \frac{(1-q^{3k})^{18}}{ (1-q^{k})^{6}}
\end{array}
\end{equation}
belong to $M_6(\Gamma_0(3))$, it is the strong modular unit of minimum weight for the level $3$.
\end{Thm}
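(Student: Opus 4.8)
The plan is to proceed exactly as in the $p=2$ case: check that $\Delta_3$ satisfies hypotheses $(i)$, $(ii)$, $(iii)$ of Theorem I-\ref{ThUMF}, which at once yields $\Delta_3\in M_6(\Gamma_0(3))$ and the fact that it is a strong modular unit; then establish minimality of the weight by a dimension count. Here $\Delta_3(\tau)=\prod_{m\mid 3}\eta(m\tau)^{a_m}$ is an $\eta$-quotient at level $N=3$ with $a_1=-6$ and $a_3=18$, so $m'=3$ when $m=1$ and $m'=1$ when $m=3$. First I would verify $(i)$: $\prod_{m\mid 3} m'^{a_m}=3^{-6}=(3^{-3})^2\in\Q^2$. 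Then $(ii)$: $\frac{1}{24}\sum_{m\mid 3}ma_m=\frac{1}{24}(-6+54)=2\in\N^*$, which also confirms $\nu(\Delta_3)=2$. Finally $(iii)$: for $c=1$ and $c=2$ one has $\gcd(c,1)=\gcd(c,3)=1$, hence $\sum_{m\mid 3}\frac{\gcd(c,m)^2}{m}a_m=\frac{-6}{1}+\frac{18}{3}=0$. By Theorem I-\ref{ThUMF}, $\Delta_3$ is a strong modular unit at level $3$ of weight $2k=\frac12(a_1+a_3)=6$, so $\Delta_3\in M_6(\Gamma_0(3))$.

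It then remains to see that $6$ is the minimal weight of a strong modular unit at level $3$. A strong modular unit must vanish at the infinite cusp, hence have valuation $\se 1$. In weight $2$, $M_2(\Gamma_0(3))$ is one-dimensional and spanned by a form of valuation $0$, so it contains no candidate. In weight $4$, $M_4(\Gamma_0(3))$ is two-dimensional, and since $[E_{2,3}^{(0)}]^2$ and $E_4(3\tau)$ are linearly independent with valuations $0$ and $1$, an upper triangular basis has valuation sequence $(0,1)$; thus the only forms of positive valuation are the scalar multiples of $E_{4,3}^{(1)}$. If $E_{4,3}^{(1)}$ were a strong modular unit, Theorem I-\ref{ThmStruct1} applied with $\Phi=E_{4,3}^{(1)}$ (so $\ell=2$, $\nu(\Phi)=1$) would give $M_6(\Gamma_0(3))=\Phi\cdot M_2(\Gamma_0(3))\oplus Vect\pa{E_{6,3}^{(s)}\ /\ \nu(E_{6,3}^{(s)})<1}$, whence $d_6(3)=d_2(3)+1=2$; but $d_6(3)=3$, a contradiction. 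Hence no strong modular unit exists in weight $2$ or $4$, and $\Delta_3$, being unitary, is \emph{the} strong modular unit of minimum weight for level $3$ — uniqueness up to a scalar $\alpha\in\C^*$ being guaranteed by the first theorem of this section.

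The genuinely non-routine inputs are the dimension facts $d_2(3)=1$, $d_4(3)=2$, $d_6(3)=3$ and the two explicit independent forms in weight $4$; these are standard and are reproved in Part II, and the whole minimality argument hinges precisely on the dimension jump $d_6(3)=3>2$. Everything else reduces to the elementary arithmetic checks feeding Theorem I-\ref{ThUMF}, so I expect the only real care needed to be bookkeeping: confirming that the valuation sequence of $M_4(\Gamma_0(3))$ is indeed $(0,1)$ and that $d_6(3)=3$.
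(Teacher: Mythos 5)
Your proposal is correct and follows essentially the same route as the paper: the same three arithmetic verifications feeding Theorem I-\ref{ThUMF} (with $a_1=-6$, $a_3=18$), and the same minimality argument, ruling out weight $2$ because the space is spanned by a valuation-$0$ form and weight $4$ because division by the valuation-$1$ candidate would force $\dim(M_6(\Gamma_0(3)))=2$ against the true value $3$. One trivial slip worth fixing: $E_4(3\tau)=1+240q^3+O(q^6)$ has valuation $0$, not $1$; it is the combination $[E_{2,3}^{(0)}]^2-E_4(3\tau)=24q+O(q^2)$ that has valuation $1$ and yields the valuation sequence $(0,1)$, exactly as the paper deduces.
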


\begin{proof}
The $\Delta_3$ function is an $\eta$-quotient at level $N=3$, divisors $m\in \{1,3\}$ with $a_1 = -6$ and $a_2 = 18$.

The $\Delta_3$ function of weight $2k = \frac{1}{2}(a_1+a_3) = 6$ verifies the hypotheses of application of Theorem I-\ref{ThUMF} :
\begin{equation*}
\prod_{m\mid 3} m'^{a_m} = 3^{-6} \in \Q^2, \ \ \text{and} \ \ \frac{1}{24}\sum_{m\mid 3} m a_m = 2\in \N^*.
\end{equation*}
Finally, for $c=1$ and $c=2$, we find $\ds \sum_{m\mid 3} \frac{gcd(c,m))^2}{m} a_m = \sum_{m\mid 3} \frac{a_m}{m} = 0$.
\end{proof}
\bs

\textbf{$\bullet$ The $p\se 5$ case, $p$ prime.}
\ms

We then have a general shape for a strong modular unit of $p\se5$, $p$ prime.

\begin{Thm} For any prime number $p \se 5$, we define on $\cal H$ the function $\Delta_p$ :
\begin{equation}
\begin{array}{lcl}
\Delta_p(\tau) & = & \ds \eta(p\tau)^{2p} \eta(\tau)^{-2}\\
 & = & \ds {q}^{({p}^{2}-1)/12}\prod _{n=1}^{+\infty}{\frac{(1-{q}^{pn}) ^{2p}}{ (1-{q}^{n})^{2}}}. \label{DeltaEta}
\end{array}
\end{equation}
So $\Delta_p$ is a strong modular unit of $M_{p-1}(\Gamma_0(p))$.
\end{Thm}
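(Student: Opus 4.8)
The plan is to apply Theorem I-\ref{ThUMF} directly. The function $\Delta_p$ is the $\eta$-quotient at the prime level $N = p$, whose only positive divisors are $m = 1$ and $m = p$, with exponents $a_1 = -2$ and $a_p = 2p$ (so that the $q$-expansion in (\ref{DeltaEta}) is just the product of $\eta(p\tau)^{2p} = q^{p^2/12}\prod_n(1-q^{pn})^{2p}$ and $\eta(\tau)^{-2} = q^{-1/12}\prod_n(1-q^n)^{-2}$). It then suffices to verify the three numerical conditions (\ref{UMF1}), (\ref{UMF2}), (\ref{UMF3}) of that theorem, paying attention to which divisor is $m$ and which is $m' = N/m$.

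For condition (i): the complementary divisors are $1' = p$ and $p' = 1$, so $\prod_{m\mid p} m'^{a_m} = p^{a_1}\cdot 1^{a_p} = p^{-2} = (1/p)^2 \in \Q^2$. (Interchanging the roles of $m$ and $m'$ would instead give $p^{2p}$, which is not a square since $p$ is prime; this is the one spot where the bookkeeping matters.) For condition (ii): $\sum_{m\mid p} m a_m = (1)(-2) + (p)(2p) = 2(p^2-1)$, hence $\nu(\Delta_p) = \frac{1}{24}\sum_{m\mid p} m a_m = \frac{p^2-1}{12}$, and I would check that this lies in $\N^*$ by the elementary fact that $p\se 5$ is coprime to $6$: among $p-1$ and $p+1$ both are even with one divisible by $4$, and exactly one of the two is divisible by $3$, so $24 \mid (p-1)(p+1) = p^2-1$, whence $\frac{p^2-1}{12}$ is a (positive, in fact even) integer. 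This also confirms the exponent $(p^2-1)/12$ appearing in (\ref{DeltaEta}). For condition (iii): when $1\ie c\ie p-1$ one has $\gcd(c,p) = 1 = \gcd(c,1)$ since $p$ is prime, so $\sum_{m\mid p}\frac{\gcd(c,m)^2}{m}a_m = \frac{1^2}{1}(-2) + \frac{1^2}{p}(2p) = -2 + 2 = 0$.

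Applying Theorem I-\ref{ThUMF} then gives that $\Delta_p$ is a strong modular unit at level $p$ of weight $2k = \frac{1}{2}\sum_{m\mid p} a_m = \frac{1}{2}(-2 + 2p) = p-1$, i.e. $\Delta_p \in M_{p-1}(\Gamma_0(p))$, with valuation $(p^2-1)/12$ at the infinite cusp. There is no real obstacle in this argument: it reduces entirely to the divisibility $24\mid p^2-1$ for primes $p\se 5$ and to keeping the roles of $m$ and $m'$ straight in conditions (i) and (iii); all the substantive work has been absorbed into Theorems I-\ref{ThUMF0}--I-\ref{ThUMF}.
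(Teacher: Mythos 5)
Your proposal is correct and takes essentially the same route as the paper: it identifies $\Delta_p$ as the $\eta$-quotient with $a_1=-2$, $a_p=2p$ and verifies the three hypotheses of Theorem I-\ref{ThUMF}, obtaining weight $p-1$ and valuation $(p^2-1)/12$. The only cosmetic difference is the integrality argument for $(p^2-1)/12$: you invoke $24\mid p^2-1$, while the paper writes $p=6k\pm 1$ and computes directly.
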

\ms

Note the equality $\Delta_p(\tau ^{12}=\Delta(p\tau )^{p}\Delta(\tau ^{-1}$ which indicates a modular property for the $p-1$ weight function $\Delta_p$.
\ms

Let's also note that if $p\se 5$ is prime, then $\frac{p^2-1}{12}\in \N$.
More generally, if $N$ is a positive integer such as $N \equiv 1 (\text{mod} \ 6)$ or $N \equiv 5 (\text{mod} \ 6)$, which is the case for $p\se 5$ prime, then $frac{N^2-1}{12} \in \N$.

Indeed, if $N = 6k+1$ then $\frac{N^2-1}{12} = 3k^2+k$ and if $N = 6k+5$ then $\frac{N^2-1}{12} = 3k^2+k+2$.
\ms

\begin{proof}
The function $\Delta_p$ is an $\eta$-quotient at level $p$, with divisors $m\in \{1,p\}$ and $a_1 = -2$ and $a_p = 2p$.

The function $\Delta_p$ has weight $2k = p-1\in 2\N^*$ and verifies the hypotheses of application of Theorem I-\ref{ThUMF} :
\begin{equation*}
\prod_{m\mid p} m'^{a_m} = p^{-2} \in \Q^2, \ \ \text{and} \ \ \frac{1}{24}\sum_{m\mid p} m a_m = \frac{p^2-1}{12}\in \N^*.
\end{equation*}
Finaly, for $c\in \llbracket1,p-1\rrbracket$, we have $\ds \sum_{m\mid p} \frac{gcd(c,m))^2}{m} a_m = \sum_{m\mid p} \frac{a_m}{m} = 0$.
\end{proof}
\bs
\bs

\section{-- Structure and bases of $\pa{M_{2k}(\Gamma_0(p))}_{k\in \N^*}$, $p$ prime}
\ms

We are looking for building a sequence of unitary upper triangular bases $({\cal B}_{2k}(\Gamma_0(p)))_{k\in\N^*}$ of $\pa{M_{2k}(\Gamma_0(p))}_{k\in \N^*}$ and we note in a generic way ${\cal B}_{2k}(\Gamma_0(p)) = (E_{2k,p}^{(s)})_{0\ie s \ie d_{2k}(p)-1}$.
\ms

Let's start with the special $p=2$ and $p=3$ cases that must be dealt with separately. This also makes it possible to understand the algorithm of production of the bases on these two examples.
\bs

\textbf{$\bullet$ The $p=2$ case.}
\ms

Let $E_{2,2}^{(0)}$ be the unitary generator of $M_{2}(\Gamma_0(2))$ which has valuation $0$. It is therefore possible to choose $E_{2k,2}^{(0)} =[E_{2,2}^{(0)}]^k$ as the first vector of the ${\cal B}_{2k}(\Gamma_0(2))$ unitary upper triangular bases.

The $\Delta_2$ function being of weight $4$ and valuation $1$, Theorem I-\ref{ThmStruct1} gives the following result.
\begin{Cor}
\begin{equation*}
\forall k\se 3, \ \ M_{2k}(\Gamma_0(2)) = Vect(E_{2k,2}^{(0)})\oplus \Delta_2 . M_{2k-4}(\Gamma_0(2))
\end{equation*}
moreover
\begin{equation*}
{\cal B}_{2k}(\Gamma_0(2)) = \pa{[E_{2,2}^{(0)}]^a \Delta_2^b, \ \text{with} \ (a,b)\in\N^2 \ \text{such \ as} \ a+2b = k}
\end{equation*}

is an upper triangular basis of $M_{2k}(\Gamma_0(2))$
\end{Cor}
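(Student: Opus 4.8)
The plan is to apply Theorem I-\ref{ThmStruct1} with the strong modular unit $\Phi = \Delta_2$, which has weight $2\ell = 4$ (so $\ell = 2$) and valuation $\nu(\Delta_2) = 1$, and then to identify explicitly the complementary summand $Vect\pa{E_{2k,2}^{(s)} \ / \ \nu(E_{2k,2}^{(s)}) < \nu(\Delta_2)}$ appearing there. Since $\nu(\Delta_2) = 1$, this complement consists exactly of the basis vectors of valuation $0$. The first step is to recall the two inputs I am allowed to use: that $M_2(\Gamma_0(2))$ is one-dimensional, spanned by a unitary form $E_{2,2}^{(0)} = 1 + O(q)$ of valuation $0$ (established in Part II, used here as given), and that consequently $[E_{2,2}^{(0)}]^k \in M_{2k}(\Gamma_0(2))$ is a unitary modular form of valuation $0$ for every $k \ge 1$. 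Because $d_{2k}(2) \ge 1$ and a unitary upper triangular basis is unique in its sequence of valuations (Lemma preceding the first theorem), the valuation-$0$ part of any unitary upper triangular basis is one-dimensional and may be taken to be $[E_{2,2}^{(0)}]^k$; hence $Vect\pa{E_{2k,2}^{(s)} \ / \ \nu(E_{2k,2}^{(s)}) < \nu(\Delta_2)} = Vect\pa{[E_{2,2}^{(0)}]^k}$ for $k \ge 2$, which already gives the displayed direct-sum formula for all $k \ge 3$ (the case $k = \ell$ needs $M_{2k-2\ell} = M_0 = \C$, handled identically).

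For the basis statement, I would proceed by strong induction on $k$. For $k = 1$ the set is $\{E_{2,2}^{(0)}\}$, a basis of $M_2(\Gamma_0(2))$; for $k = 2$ it is $\{[E_{2,2}^{(0)}]^2, \Delta_2\}$, which is upper triangular with valuations $0$ and $1$, hence a basis of the (two-dimensional) space $M_4(\Gamma_0(2))$. For the inductive step with $k \ge 3$, the corollary's decomposition gives $M_{2k}(\Gamma_0(2)) = Vect\pa{[E_{2,2}^{(0)}]^k} \oplus \Delta_2 \cdot M_{2k-4}(\Gamma_0(2))$; by the induction hypothesis $\{[E_{2,2}^{(0)}]^{a'}\Delta_2^{b'} : a' + 2b' = k-2\}$ is an upper triangular basis of $M_{2k-4}(\Gamma_0(2))$, so multiplying by $\Delta_2$ (which shifts weight by $4$ and valuation by $1$, an injective operation) yields $\{[E_{2,2}^{(0)}]^{a'}\Delta_2^{b'+1} : a' + 2(b'+1) = k\}$, a linearly independent family of forms of valuations $\ge 1$. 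Adjoining $[E_{2,2}^{(0)}]^k$ of valuation $0$ produces exactly the set $\{[E_{2,2}^{(0)}]^a \Delta_2^b : a + 2b = k\}$, and it spans $M_{2k}(\Gamma_0(2))$ by the direct-sum decomposition. It remains to check these products have pairwise distinct valuations — but $\nu([E_{2,2}^{(0)}]^a \Delta_2^b) = a \cdot 0 + b \cdot 1 = b$, and on the constraint line $a + 2b = k$ the value of $b$ determines the pair $(a,b)$, so the valuations are distinct; each product is unitary as a product of unitary forms. Hence the family is a unitary upper triangular basis.

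The only genuinely substantive point is the structural decomposition itself, and that is already supplied by Theorem I-\ref{ThmStruct1} via the quotient isomorphism $\Psi \mapsto \Psi/\Delta_2$ — in particular the verification, recorded there, that dividing a form of valuation $\ge \nu(\Delta_2)$ by the strong modular unit $\Delta_2$ preserves modularity, because $\Delta_2$ does not vanish at any finite cusp (condition $(iii)$ of Definition I-\ref{DefUMF}). Everything else here — the identification of the valuation-$0$ complement with $Vect\pa{[E_{2,2}^{(0)}]^k}$, and the induction bookkeeping on the exponents $(a,b)$ — is routine, so I do not anticipate a real obstacle; the one thing to be careful about is not to claim the formula for $k < \ell = 2$, i.e. to treat $k = 1$ separately as a base case rather than an instance of the corollary.
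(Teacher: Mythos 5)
Your proposal is correct and follows essentially the same route as the paper: apply Theorem I-4 (the structure theorem) with $\Phi=\Delta_2$ of weight $4$ and valuation $1$, identify the valuation-$0$ complement with the span of $[E_{2,2}^{(0)}]^k$, and unwind the decomposition recursively to get the monomial basis. The paper simply states this as a direct consequence of the structure theorem after choosing $E_{2k,2}^{(0)}=[E_{2,2}^{(0)}]^k$; your added induction and the check that the valuation $b$ determines the pair $(a,b)$ on the line $a+2b=k$ are exactly the bookkeeping the paper leaves implicit (and carries out explicitly only in the analogous $p=3$ case).
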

\ms

\begin{proof}
These are direct consequences of Theorem I-\ref{ThmStruct1}.
\end{proof}
\ms

Of course, a similar result is true for $N = 1$ and leads to upper triangular bases structured by $\Delta$, instead of the usual result obtained with the $E_4$ and $E_6$ generators. The details of this will be studied in Part II.
\bs

\textbf{$\bullet$ The $p=3$ case.}
\ms

The strong modular form $\Delta_3$ is of weight $6$, valuation $2$, and Theorem I-\ref{ThmStruct1} is written

\begin{Cor}
\begin{equation}
\forall k \se 4, \ \ M_{2k}(\Gamma_0(3)) = Vect(E_{2k,3}^{(0)},E_{2k,3}^{(1)}) \oplus \Delta_3.M_{2k-6}(\Gamma_0(3)).\label{D3}
\end{equation}

We then have a basis of $M_{2k}(\Gamma_0(3))$ :

\begin{equation}
{\cal B}_{2k}(\Gamma_0(3)) = \pa{[E_{2,3}^{(0)}]^a.\Delta_3^b, \ (a,b)\in\N^2 \ / \ a+3b = k} \cup \pa{E_{4,3}^{(1)}.[E_{2,3}^{(0)}]^a.\Delta_3^b, \ (a,b)\in\N^2 \ / \ a+3b = k-2}. \label{B3}
\end{equation}

\end{Cor}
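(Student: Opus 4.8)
The plan is to apply Theorem I-\ref{ThmStruct1} directly with $N=3$ and $\Phi = \Delta_3$, which is a strong modular unit of weight $2\ell = 6$ and valuation $\nu(\Delta_3) = 2$. Since $\nu(\Delta_3)=2$, the relevant subspace $Vect(E_{2k,3}^{(s)} \ / \ \nu(E_{2k,3}^{(s)}) < \nu(\Delta_3))$ consists of the basis vectors of valuation $0$ and $1$. So I first need to argue that for every $k\se 2$, the space $M_{2k}(\Gamma_0(3))$ does contain forms of valuation $0$ and $1$, i.e. that $d_{2k}(3)\se 2$ and that the first two valuations of an upper triangular basis are exactly $0$ and $1$; this follows from the already-exhibited forms $E_{2k,3}^{(0)} = [E_{2,3}^{(0)}]^k = 1+O(q)$ (valuation $0$) and $E_4(3\tau)$-type combinations producing a form of valuation $1$ (as discussed in the paragraph preceding Theorem on $\Delta_3$), together with the observation that $[E_{2,3}^{(0)}]^k$ and the valuation-$1$ form are $\C$-linearly independent. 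For $2k\se 4$ the valuation-$1$ generator $E_{2k,3}^{(1)}$ exists; for $k\se 4$ Theorem I-\ref{ThmStruct1} then gives exactly the displayed decomposition (\ref{D3}).

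Next I would derive the basis (\ref{B3}) by iterating (\ref{D3}). The Euclidean division of $k$ by $\ell = 3$ gives $k = 3b + r$, and feeding the recursion into itself (the second displayed formula of Theorem I-\ref{ThmStruct1}) yields
\begin{equation*}
M_{2k}(\Gamma_0(3)) = \Delta_3^{q}.M_{2r}(\Gamma_0(3)) \bigoplus_{n=0}^{q-1} \Delta_3^n.Vect\pa{E_{2k-6n,3}^{(s)} \ / \ \nu < \nu(\Delta_3)}.
\end{equation*}
To turn this into the clean closed form (\ref{B3}) I would show, by induction on $k$, that in each weight the valuation-$0$ vector may be taken to be $[E_{2,3}^{(0)}]^a\Delta_3^b$ with $a+3b$ equal to the current weight index, and the valuation-$1$ vector to be $E_{4,3}^{(1)}\cdot[E_{2,3}^{(0)}]^a\Delta_3^b$ with $a+3b$ equal to the index minus $2$: multiplying the chosen generators of $M_{2k-6}(\Gamma_0(3))$ by $\Delta_3$ raises $b$ by one, while the "new" valuation-$0$ and valuation-$1$ vectors in weight $2k$ are supplied by $[E_{2,3}^{(0)}]^k$ and $E_{4,3}^{(1)}[E_{2,3}^{(0)}]^{k-2}$ respectively. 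One must check these really have valuations $0$ and $1$ (clear, since $E_{2,3}^{(0)}=1+12q+\dots$ is a unit at $i\infty$ and $\nu(E_{4,3}^{(1)})=1$) and that the total count matches $d_{2k}(3)$, which is automatic from the direct-sum dimension bookkeeping in Theorem I-\ref{ThmStruct1}.

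The main obstacle is not the recursion itself — that is a mechanical unwinding of Theorem I-\ref{ThmStruct1} — but rather justifying that the concrete products $[E_{2,3}^{(0)}]^a\Delta_3^b$ and $E_{4,3}^{(1)}[E_{2,3}^{(0)}]^a\Delta_3^b$ are genuinely the vectors produced at each stage, i.e. that at every weight the "complementary" space $Vect(E_{2k,3}^{(s)} \ / \ \nu(E_{2k,3}^{(s)}) < \nu(\Delta_3))$ is exactly $2$-dimensional and is spanned by the valuation-$0$ and valuation-$1$ elements we have named. This reduces to knowing that $d_{2k}(3)\se 2$ for all $k\se 2$ and that no upper triangular basis of $M_{2k}(\Gamma_0(3))$ can have a second valuation exceeding $1$ — equivalently, that there is always a form of valuation exactly $1$. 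I would establish this from the two explicit linearly independent forms already displayed (the $q$-expansions $1+24q+\dots$ and $1+240q^3+\dots$ in weight $4$, squared/multiplied up to higher weight, give a form of valuation $1$ after subtraction), invoking the Gauss/row-reduction argument of Lemma I-1. Once that input is in place, (\ref{D3}) and (\ref{B3}) follow formally, so I would keep the write-up short: cite Theorem I-\ref{ThmStruct1}, note $2\ell=6$ and $\nu(\Delta_3)=2$, record the induction on $b$, and close.
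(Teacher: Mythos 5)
Your proposal is correct and follows essentially the same route as the paper: apply Theorem I-\ref{ThmStruct1} with $\Phi=\Delta_3$ (weight $6$, valuation $2$), identify the complementary space as the span of the valuation-$0$ and valuation-$1$ generators realized concretely as $[E_{2,3}^{(0)}]^k$ and $E_{4,3}^{(1)}[E_{2,3}^{(0)}]^{k-2}$, and unwind the recursion by induction on $k$. The paper's proof is the same argument, with the base cases $k=1,2$ checked explicitly before the inductive step.
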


\begin{proof}
This time again, the first equality is an application of Theorem I-\ref{ThmStruct1}

We have $\dim(M_{2}(\Gamma_0(3))) = 1$, $\dim(M_{4}(\Gamma_0(3))) = 2$ with $\nu(E_{2,3}^{(0)}) = 0$, $\nu(E_{4,3}^{(0)}) = 0$ and $\nu(E_{4,3}^{(1)}) = 1$.

We can therefore choose $E_{4,3}^{(0)} = [E_{2,3}^{(0)}]^2$, and more generally, $E_{2k,3}^{(0)} =[E_{2,3}^{(0)}]^k$ as the first element of the unitary upper triangular basis of $M_{2k}(\Gamma_0(3))$.

Likewise, it is legitimate to choose for everything $k\se3$, $E_{2k,3}^{(1)} = E_{4,3}^{(1)}[E_{2,3}^{(0)}]^{k-2}$.
\ms

It is easy to verify that the $(\ref{B3})$ relationship produces a basis for $k=1$ and $k=2$, we assume the result true up to the order $k-1\se 2$. Based on the above, the $(\ref{D3})$ relationship indicates that $([E_{2,3}^{(0)}]^k, E_{4,3}^{(1)}[E_{2,3}^{(0)}]^{k-2}) \cup \Delta_3 {\cal B}_{2k-4}(\Gamma_0(3))$ provides a basis ${\cal B}_{2k}(\Gamma_0(3))$.
\ms

It can then be seen that

$\ds \pa{[E_{2,3}^{(0)}]^a.\Delta_3^b, \ (a,b)\in\N^2 \ / \ a+3b = k} = ([E_{2,3}^{(0)}]^k) \cup \Delta_3\pa{[E_{2,3}^{(0)}]^a.\Delta_3^b, \ (a,b)\in\N^2 \ / \ a+3b = k-3}$

and

$\ds \pa{E_{4,3}^{(1)}.[E_{2,3}^{(0)}]^a.\Delta_3^b, \ (a,b)\in\N^2 \ / \ a+3b = k-2} =$

{\hskip 5cm} $\ds E_{4,3}^{(1)}[E_{2,3}^{(0)}]^{k-2} \cup \Delta_3\pa{E_{4,3}^{(1)}.[E_{2,3}^{(0)}]^a.\Delta_3^b, \ (a,b)\in\N^2 \ / \ a+3b = k-5}$

which concludes the recurrence demonstration.
\end{proof}
\ms

\textbf{$\bullet$ The $p\se 5$ case, $p$ prime.}
\ms

We set $p\se 5$, $p$ prime.
\ms

\begin{Lem}\label{LemDim1}
for every $k\in\N^*$,
\begin{equation}
\ds \dim(M_{2k+p-1}(\Gamma_0(p))) - \dim(M_{2k}(\Gamma_0(p))) = \nu(\Delta_p) = \frac{p^2-1}{12}.
\end{equation}
\end{Lem}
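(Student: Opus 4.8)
The plan is to compute both dimensions via a standard dimension formula for $M_{2k}(\Gamma_0(p))$ and verify the difference is constant for $k\ge 1$, but it is cleaner to avoid the genus/cusp bookkeeping entirely and argue by exhibiting an explicit exact sequence. First I would recall the multiplication-by-$\Delta_p$ map $\Phi\mapsto \Delta_p\cdot\Phi$ from $M_{2k}(\Gamma_0(p))$ into $M_{2k+p-1}(\Gamma_0(p))$: since $\Delta_p$ is a strong modular unit (Theorem I-4.4), this map is injective, and by the proof of Theorem I-2.4 its image is exactly the subspace of forms $\Psi\in M_{2k+p-1}(\Gamma_0(p))$ with $\nu(\Psi)\ge\nu(\Delta_p)$ (the quotient $\Psi/\Delta_p$ is holomorphic on $\mathcal H$, finite at every non-infinite cusp because $\Delta_p$ does not vanish there, and holomorphic at $i\infty$ precisely when $\nu(\Psi)\ge\nu(\Delta_p)$). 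Hence
\begin{equation*}
\dim M_{2k+p-1}(\Gamma_0(p)) - \dim M_{2k}(\Gamma_0(p)) = \#\{\, s \ / \ \nu(E_{2k+p-1,p}^{(s)}) < \nu(\Delta_p)\,\},
\end{equation*}
the number of basis valuations strictly below $\nu(\Delta_p)=\frac{p^2-1}{12}$ in a unitary upper triangular basis of $M_{2k+p-1}(\Gamma_0(p))$. So the statement is equivalent to: for every $k\ge 1$, the space $M_{2k+p-1}(\Gamma_0(p))$ contains forms of every valuation $0,1,\dots,\frac{p^2-1}{12}-1$, i.e. the set of attained valuations below $\frac{p^2-1}{12}$ is the full interval $\llbracket 0,\frac{p^2-1}{12}-1\rrbracket$, and none of those $\frac{p^2-1}{12}$ values is skipped.

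The key step is therefore to produce, for weight $2k+p-1$ with $k\ge 1$ (so weight $\ge p+1$), enough linearly independent forms with small valuation. I would do this with $\eta$-quotients and Eisenstein-type series at level $p$: for instance products $\eta(p\tau)^{a}\eta(\tau)^{b}$ and the forms $E_{2j}(\tau)$, $E_{2j}(p\tau)$ for appropriate $j$, whose $q$-expansions start at $q^0$, together with $\Delta_p$-independent cusp forms and the translates $E_{2,p}^{(0)}$-powers already used in the $p=2,3$ discussion. More efficiently, one can argue inductively: once one knows that $M_{2k}(\Gamma_0(p))$ attains all valuations $0,\dots,d_{2k}(p)-1$ with no gaps below some bound, Theorem I-2.4 propagates this to $M_{2k+p-1}(\Gamma_0(p))$, since that space is $\Delta_p\cdot M_{2k}(\Gamma_0(p))$ — contributing valuations $\nu(\Delta_p),\nu(\Delta_p)+1,\dots$ — plus a complement of dimension exactly $\#\{s:\nu<\nu(\Delta_p)\}$ spanned by forms of the missing small valuations. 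The base of the induction is the range $1\le k\le \frac{1}{2}\rho_p+1$, where the existence of unitary upper triangular bases with consecutive valuations $0,1,\dots,d_{2k}(p)-1$ is checked directly (this is what Part II does explicitly, and for $p$ prime the low-weight spaces are small enough to handle by hand using $E_{2k}$, $E_{2k}(p\tau)$ and $\eta$-quotients).

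The main obstacle is showing that no small valuation is skipped — equivalently that $d_{2k+p-1}(p) - d_{2k}(p)$ really equals $\frac{p^2-1}{12}$ and not something smaller. Injectivity of multiplication by $\Delta_p$ gives the inequality $d_{2k+p-1}(p)\ge d_{2k}(p)$ immediately, and the image computation gives $d_{2k+p-1}(p) - d_{2k}(p) = \#\{s:\nu(E^{(s)})<\nu(\Delta_p)\}\le \frac{p^2-1}{12}$; the content is the reverse inequality, i.e. constructing $\frac{p^2-1}{12}$ linearly independent forms in $M_{2k+p-1}(\Gamma_0(p))$ with distinct valuations all $<\frac{p^2-1}{12}$. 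I expect this to follow from the Riemann–Roch / valence count for $\Gamma_0(p)$ (the genus of $X_0(p)$ and its two cusps control exactly how many low-order $q$-expansions occur), or, in the self-contained spirit of the paper, from the explicit low-weight bases established in Part II combined with the induction above; either way, once the count $d_{2k+p-1}(p)-d_{2k}(p)=\frac{p^2-1}{12}$ is in hand for one weight it is stable under the $\Delta_p$-multiplication step, which is what makes the lemma true for all $k\in\N^*$.
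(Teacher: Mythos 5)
Your reduction is correct and matches the paper's setup: multiplication by $\Delta_p$ is injective with image $\ac{\Psi \ / \ \nu(\Psi)\se \nu(\Delta_p)}$, so the dimension gap equals the number of valuations $<\nu(\Delta_p)=\frac{p^2-1}{12}$ attained in $M_{2k+p-1}(\Gamma_0(p))$, and the entire content of the lemma is that none of these values is skipped. But that is precisely the step you do not prove. You defer it either to a Riemann--Roch / valence count (never developed in this paper) or to the explicit low-weight bases of Part II --- which only treats $1\ie N\ie 10$, hence only $p\in\{2,3,5,7\}$, and so cannot serve as a base case for a general prime. Moreover your induction only propagates the count \emph{upward} in weight (multiplying by $E_{2,p}^{(0)}$ or $\Delta_p$), so even granting a base case it establishes the identity only from that weight on; the small values of $k$, where $d_{2k}(p)$ may be smaller than $\nu(\Delta_p)$ and the claim is least obvious, are left untouched.

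The paper closes both gaps (via Lemma I-\ref{LemVal2} and the proof of Theorem I-\ref{LemDim2}, of which this lemma is the special case $N=p$). First, besides the valuation-$0$ form it produces an explicit valuation-$1$ form $E_4-H_{2,N}^2\in M_4(\Gamma_0(N))$ built from the quasi-modular $G_2$; the products $[E_{2,N}^{(0)}]^{2a-2\ell}[E_{4,N}^{(1)}]^{\ell}$ then realize every valuation $\ell\in\llbracket 0,a\rrbracket$ in $M_{4a}(\Gamma_0(N))$ with $a=\nu(\Delta_N)$, which settles the count for all sufficiently large $k$ with no case-by-case work. Second, to descend to all $k\se 1$ it observes from the explicit dimension formula $(\ref{DimM1})$ that $k\mapsto \dim(M_{2k+\rho_N}(\Gamma_0(N)))-\dim(M_{2k}(\Gamma_0(N)))$ is periodic in $k$ of period $6$; being eventually constant and periodic, it is constant, equal to $\nu(\Delta_N)$. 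Your plan needs a substitute for both ingredients --- a concrete source of forms of every small valuation, and an argument that actually reaches $k=1$ --- before it becomes a proof.
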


\begin{proof}
The second equality is known, and the first is in fact a special case of Theorem I-\ref{LemDim2}, valid in any general case $N$, and which will be demonstrated in paragraph $7$.

We rely on an explicit formula giving the dimension of $M_{2k}(\Gamma_0(N))$ depending on $k$ and $N$ \cite{Stein}.
\end{proof}

Moreover, from Theorem I-\ref{ThmStruct1}, we derive the following equality
\[\forall k\in \N^*, \ \ \dim(M_{2k+p-1}(\Gamma_0(p))) = \dim(M_{2k}(\Gamma_0(p))) + Card(\{s \ / \ \nu(E_{2k+p-1,N}^{(s)}) < \frac{p^2-1}{12}\}).\]

As a result
$\ds Card(\{s \ / \ \nu(E_{2k+p-1,N}^{(s)}) < \frac{p^2-1}{12}\}) = \frac{p^2-1}{12}$, from which the following theorem is derived.

\begin{Thm} Let $p \se 5$ be a prime number and, for an integer $k\se 1$, $(E_{2k,p}^{(s)})_{0\ie s \ie d_{2k}(N)-1}$ an upper triangular basis of $M_{2k}(\Gamma_0(p))$. So
\begin{equation}
\forall k\se \frac{p+1}{2}, \ \ \forall s\in \llbracket0,\frac{p^2-1}{12} -1\rrbracket, \ \ \nu(E_{2k,p}^{(s)}) = s.
\end{equation}
\end{Thm}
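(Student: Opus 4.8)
The plan is to combine two dimension counts already available in the excerpt. First, Lemma I-\ref{LemDim1} gives
\[
\dim(M_{2k+p-1}(\Gamma_0(p))) - \dim(M_{2k}(\Gamma_0(p))) = \frac{p^2-1}{12}
\]
for every $k\in\N^*$. Second, Theorem I-\ref{ThmStruct1} applied with the strong modular unit $\Phi=\Delta_p$ (weight $p-1$, valuation $\nu(\Delta_p)=\frac{p^2-1}{12}$) yields the direct sum decomposition, hence on dimensions
\[
\dim(M_{2k+p-1}(\Gamma_0(p))) = \dim(M_{2k}(\Gamma_0(p))) + \mathrm{Card}\{s \ / \ \nu(E_{2k+p-1,p}^{(s)}) < \tfrac{p^2-1}{12}\}.
\]
Comparing the two identities forces $\mathrm{Card}\{s \ / \ \nu(E_{2k+p-1,p}^{(s)}) < \frac{p^2-1}{12}\} = \frac{p^2-1}{12}$ for all $k\ge 1$, i.e. for every sufficiently large even weight the basis contains exactly $\frac{p^2-1}{12}$ forms of valuation strictly below $\nu(\Delta_p)$.

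Next I would turn this cardinality statement into the precise equality $\nu(E_{2k,p}^{(s)})=s$. The key observation is that the valuations in an upper triangular basis are strictly increasing (Definition I-\ref{DefUMF}) and form a subset of $\N$, and the valuation sequence is independent of the basis chosen (Lemma I-1). If a strictly increasing sequence of nonnegative integers $\nu(E^{(0)})<\nu(E^{(1)})<\cdots$ has the property that exactly $\frac{p^2-1}{12}$ of its terms are $<\frac{p^2-1}{12}$, then those terms must be exactly $0,1,\ldots,\frac{p^2-1}{12}-1$, since a strictly increasing integer sequence of length $m$ with all terms in $\{0,\ldots,m-1\}$ is forced to be $0,1,\ldots,m-1$. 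This gives $\nu(E_{2k,p}^{(s)})=s$ for $0\le s\le \frac{p^2-1}{12}-1$, provided the weight $2k$ is of the form $2k'+p-1$ with $k'\ge 1$, i.e. $2k\ge p+1$, which is exactly the range $k\ge \frac{p+1}{2}$ in the statement.

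The only remaining point is to check the boundary of the index range, namely that $d_{2k}(p)\ge \frac{p^2-1}{12}$ whenever $k\ge\frac{p+1}{2}$, so that the indices $s=0,\ldots,\frac{p^2-1}{12}-1$ actually exist; this is immediate because the $\frac{p^2-1}{12}$ forms counted above are genuinely present in the basis. I expect the main (minor) obstacle to be bookkeeping with the shift $k\mapsto k+\frac{p-1}{2}$: one must make sure every $k$ with $k\ge\frac{p+1}{2}$ is reached as $k'+\frac{p-1}{2}$ for some $k'\in\N^*$, which it is. Everything else is the elementary combinatorial fact about strictly increasing integer sequences, so no serious difficulty is anticipated.
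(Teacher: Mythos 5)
Your proposal is correct and follows essentially the same route as the paper: the text preceding the theorem derives $\mathrm{Card}\{s \ / \ \nu(E_{2k+p-1,p}^{(s)}) < \frac{p^2-1}{12}\} = \frac{p^2-1}{12}$ by comparing Lemma I-5.1 with the dimension count coming from Theorem I-2.2, and then concludes exactly as you do. The only difference is that you spell out the final combinatorial step (a strictly increasing sequence of $m$ nonnegative integers all below $m$ must be $0,1,\ldots,m-1$), which the paper leaves implicit.
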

\bs 

This result is important. It indicates that the new items appearing in ${\cal B}_{2k}(\Gamma_0(p))$ are regularly upper triangular, the other items coming from $\Delta_p.{\cal B}_{2k-(p-1)}(\Gamma_0(p))$. It remains to characterize these new elements.
\ms

We have the following result, true for the whole $N\se 2$.

\begin{Thm}\label{ThmVal1}
Either an integer $N\se 2$, then $M_2(\Gamma_0(N))$ has elements of valuation $0$.
\end{Thm}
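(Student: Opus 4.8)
The plan is to exhibit an explicit element of $M_2(\Gamma_0(N))$ with non-zero constant term for every $N\se 2$. The natural candidate is built from the weight-$2$ Eisenstein series: although $E_2$ itself is only quasi-modular, the combination $\tau\mapsto E_2(\tau) - N E_2(N\tau)$ is a genuine modular form of weight $2$ on $\Gamma_0(N)$ whenever $N>1$ (this is the standard ``oldform at level $N$'' construction). Its $q$-expansion is $(1-N) + (\text{higher order terms})$, and since $N\se 2$ the constant term $1-N$ is non-zero. Hence, after normalising, one obtains an element of $M_2(\Gamma_0(N))$ of valuation $0$, which is exactly the claim.

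The key steps, in order, would be: (1) recall the transformation law of $E_2$, namely $E_2((a\tau+b)/(c\tau+d)) = (c\tau+d)^2 E_2(\tau) - \frac{6ic}{\pi}(c\tau+d)$, so that the anomalous term is linear in $c$; (2) apply this with $\tau$ replaced by $N\tau$ and a matrix $\begin{pmatrix} a & b\\ c & d\end{pmatrix}\in\Gamma_0(N)$, noting that $N\cdot\frac{a\tau+b}{c\tau+d} = \frac{a(N\tau)+Nb}{(c/N)(N\tau)+d}$ with $c/N\in\Z$, so that $N E_2(N\tau)$ transforms with anomalous term proportional to $c$ as well; (3) check that the two anomalous terms cancel in $E_2(\tau) - N E_2(N\tau)$, giving condition $(\ref{MF})$ for weight $2$; (4) verify holomorphy on $\cal H$ and the growth condition $(ii)$ at every cusp — at $i\infty$ both $E_2(\tau)$ and $E_2(N\tau)$ tend to $1$, and at the other cusps one uses the fact that $E_2(N\tau)$ is holomorphic there because $N\tau$ stays in $\cal H$ with controlled behaviour; (5) read off the constant term $1-N\neq 0$, so the valuation is $0$.

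Alternatively, and perhaps more in the spirit of this paper, one could avoid $E_2$ entirely and use an $\eta$-quotient: for instance one checks via Theorems I-\ref{ThUMF0} and the cusp-order formula $(\ref{K5})$ that a suitable product $\prod_{m\mid N}\eta(m\tau)^{a_m}$ of total weight $2$ and valuation $0$ exists — the valuation-$0$ requirement forces $\sum_{m\mid N} m a_m = 0$, and one must solve the accompanying congruences $(\ref{K1})$, $(\ref{K2})$ and the square condition. For $N$ not too constrained this is straightforward, but making it work uniformly in $N$ is delicate, so the $E_2$-based argument is cleaner.

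The main obstacle is step (4): one must genuinely verify the limit condition $(ii)$ of Definition I-\ref{Def3} at all cusps, not merely at $i\infty$. This amounts to controlling $(c\tau+d)^{-2}\bigl(E_2(\tfrac{a\tau+b}{c\tau+d}) - N E_2(\tfrac{a(N\tau)+Nb}{c\tau+d})\bigr)$ as $\tau\to i\infty$ for each $\begin{pmatrix} a & b\\ c & d\end{pmatrix}\in SL_2(\Z)$; the anomalous linear terms of $E_2$ must be tracked carefully to see that, after the weight-$2$ renormalisation, they contribute only a bounded quantity and the holomorphic parts converge. Everything else — the modular transformation under $\Gamma_0(N)$ and the computation of the constant term — is a routine calculation.
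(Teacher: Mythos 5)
Your proposal is essentially the paper's own proof: the paper takes $G_{2,N}(\tau)=G_2(\tau)-NG_2(N\tau)$, the unnormalised version of your $E_2(\tau)-NE_2(N\tau)$, cites the standard references (\cite{Miya}, \cite{Diam}) for its membership in $M_2(\Gamma_0(N))$, and reads off the non-zero constant term $2(1-N)\zeta(2)$. Your extra care about verifying condition $(ii)$ at all cusps is a legitimate point the paper delegates to those references, so the two arguments coincide.
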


\begin{proof}
It is a well known result obtained in general thanks to the false series of Eisenstein $G_2$ (see \cite{Miya} or \cite{Diam}). We define

\begin{equation*}
\begin{array}{lcl}
G_2(\tau) & = & \ds \sum_{m\in \Z}\sum_{n\in \Z'_m} \frac{1}{(m\tau+n)^{2}}\\
& = & \ds 2\zeta(2) - 8\pi^2\sum_{n=1}^{+\infty} \sigma(n) q^n
\end{array}
\end{equation*}
where $\Z'_m = \Z-\{0\}$ if $m=0$ and $\Z'_m = \Z$ otherwise.

Then we show that $G_{2,N}(\tau) = G_2(\tau)-NG_2(N\tau)$ belongs to $M_2(\Gamma_0(N))$. Moreover, $\ds \lim_{\tau\to +\infty} G_{2,N}(\tau) = 2(1-N)\zeta(2) \not= 0$, which ensures results.
\ms

A different demonstration of this result will be given in Part II.
We will show precisely that the function $\sum_{k=1}^{N-1}\wp(k\tau,N\tau)$ is an element of $M_2(\Gamma_0(N))$ valuation $0$, where $\wp(z,\tau)$ designates the elliptical function of Weierstrass on a network of periods $\Z + \tau \Z$, taken at point $z$.
\end{proof}
\ms

\begin{Cor}\label{CorVal1}
let $N\se 2$ be an integer, if $(E_{2k,p}^{(s)})_{0\ie s \ie d_{2k}(N)-1}$ is a unitary upper triangular basis of $M_{2k}(\Gamma_0(N))$, then $\nu(E_{2k,p}^{(0)}) = 0$ and we can choose $E_{2k,p}^{(0)} = [E_{2,p}^{(0)}]^k$.
\end{Cor}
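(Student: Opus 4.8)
The plan is to derive the statement entirely from Theorem I-\ref{ThmVal1}, the multiplicativity of the valuation, and the invariance of the valuation sequence of a unitary upper triangular basis (the Lemma stated just after Definition I-\ref{DefUMF}). First I would fix a unitary element $E_{2,N}^{(0)}\in M_2(\Gamma_0(N))$ with $\nu(E_{2,N}^{(0)})=0$; its existence is exactly the content of Theorem I-\ref{ThmVal1}. (The statement writes $E_{2,p}^{(0)}$, with $p$ a typo for $N$.) Since a product of modular forms for $\Gamma_0(N)$ is again one, and since $\nu(\Phi\Psi)=\nu(\Phi)+\nu(\Psi)$ with the leading Fourier coefficient of the product equal to the product of the leading coefficients, the power $[E_{2,N}^{(0)}]^k$ lies in $M_{2k}(\Gamma_0(N))$ and is unitary of valuation $0$.

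Next I would argue that $\nu(E_{2k,N}^{(0)})=0$. Every nonzero $\Phi\in M_{2k}(\Gamma_0(N))$ is holomorphic at $i\infty$, so $\nu(\Phi)\se 0$; and we have just produced one element of valuation exactly $0$, so $\min\{\nu(\Phi)\ /\ \Phi\in M_{2k}(\Gamma_0(N))\setminus\{0\}\}=0$. In any unitary upper triangular basis the first element realises this minimum, hence $\nu(E_{2k,N}^{(0)})=0$, and by the invariance of the valuation sequence this conclusion does not depend on the chosen basis.

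Finally, to justify the specific choice $E_{2k,N}^{(0)}=[E_{2,N}^{(0)}]^k$, I would use that the valuations of a unitary upper triangular basis are strictly increasing: therefore $E_{2k,N}^{(0)}$ is the \emph{unique} basis vector of valuation $0$ and $\nu(E_{2k,N}^{(s)})\se 1$ for $s\se 1$. Then $g:=[E_{2,N}^{(0)}]^k-E_{2k,N}^{(0)}$ is a modular form of valuation $\se 1$, hence $g\in Vect(E_{2k,N}^{(s)}\ /\ s\se 1)$; consequently replacing $E_{2k,N}^{(0)}$ by $[E_{2,N}^{(0)}]^k=E_{2k,N}^{(0)}+g$ leaves the total span unchanged and preserves both the unitarity and the strict growth of valuations, so the modified family is again a unitary upper triangular basis.

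I do not expect a genuine obstacle here; the only point deserving care is the last substitution, where one must invoke that the valuation-$0$ part of the basis is one-dimensional — which is automatic from the strict monotonicity of the valuations — rather than merely the bare existence of some form of valuation $0$.
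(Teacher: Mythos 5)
Your proof is correct and follows the same route the paper intends: the paper states this corollary without proof as an immediate consequence of Theorem I-\ref{ThmVal1}, and your argument simply spells out the details (powering up the valuation-$0$ form of weight $2$, noting that valuations are non-negative so the first basis vector has valuation $0$, and checking that the substitution preserves the unitary upper triangular basis property). No gap; the care you take with the uniqueness of the valuation-$0$ basis vector is exactly the right point to make explicit.
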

\ms

The Theorem I-\ref{ThmVal1} and corollary I-\ref{CorVal1} allow the algorithmic construction of structured bases. Indeed, for $\ds k\se\frac{p+1}{2}$, we can choose

\begin{equation}
E_{2k,p}^{(s)} = E_{p+1,p}^{(s)}[E_{2,p}^{(0)}]^{k-\frac{p+1}{2}}, \ \ 0\ie s < \frac{p^2-1}{2}.
\end{equation}
These elements are unitary and regularly upper triangular (without jumps) in space $M_{2k}(\Gamma_0(p))$, so they qualify as $\ds \frac{p^2-1}{2}$ first elements of ${\cal B}_{2k}(\Gamma_0(p))$.
\ms

We can then specify Theorem I-\ref{ThmStruct1}

\begin{Thm}
Let $p \se 5$ be a prime number, then
\begin{equation}
\forall k\in \N, \ k\se \frac{p-1}{2}, \ \ M_{2k}(\Gamma_0(p)) = \Delta_p.M_{2k-(p-1)}(\Gamma_0(p)) \oplus Vect\pa{E_{p+1,p}^{(s)}[E_{2,p}^{(0)}]^{k-\frac{p+1}{2}} \ / \ 0\ie s < \frac{p^2-1}{12}}.
\end{equation}
Therefore, if $k\in \N^*$ and $k = q\frac{p-1}{2}+r$ with $1\ie r \ie \frac{p-1}{2}$,
\begin{equation}
 M_{2k}(\Gamma_0(p)) = \Delta_p^{q}. M_{2r}(\Gamma_0(p)) \bigoplus_{n=0}^{q-1}  \Delta_p^n .Vect\pa{E_{p+1,p}^{(s)}[E_{2,p}^{(0)}]^{k-(n+1)\frac{p-1}{2}-1} \ / \ 0\ie s < \frac{p^2-1}{12}}.
\end{equation}
\end{Thm}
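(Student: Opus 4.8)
The plan is to derive this theorem as a direct specialization of Theorem~I-\ref{ThmStruct1} applied with $N = p$ and $\Phi = \Delta_p$, together with the valuation information already accumulated in this section. Recall that $\Delta_p$ is a strong modular unit of weight $p-1$ and valuation $\nu(\Delta_p) = \frac{p^2-1}{12}$, so Theorem~I-\ref{ThmStruct1}, with $2\ell = p-1$, gives for every $k \se \frac{p-1}{2}$ the decomposition
\[
M_{2k}(\Gamma_0(p)) = \Delta_p . M_{2k-(p-1)}(\Gamma_0(p)) \oplus Vect\pa{E_{2k,p}^{(s)} \ / \ \nu(E_{2k,p}^{(s)}) < \tfrac{p^2-1}{12}},
\]
and, iterating $q$ times when $k = q\frac{p-1}{2} + r$ with $1\ie r \ie \frac{p-1}{2}$,
\[
M_{2k}(\Gamma_0(p)) = \Delta_p^{q}. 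M_{2r}(\Gamma_0(p)) \bigoplus_{n=0}^{q-1} \Delta_p^n . Vect\pa{E_{2k-n(p-1),p}^{(s)} \ / \ \nu(E_{2k-n(p-1),p}^{(s)}) < \tfrac{p^2-1}{12}}.
\]
So the only thing left is to identify, for each weight $2k$ with $k \se \frac{p+1}{2}$, a concrete unitary choice of the complementary vectors $E_{2k,p}^{(s)}$ with $0 \ie s < \frac{p^2-1}{12}$.

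The key input for this identification is the preceding Theorem (the one stating $\nu(E_{2k,p}^{(s)}) = s$ for $k \se \frac{p+1}{2}$ and $0 \ie s \ie \frac{p^2-1}{12}-1$): it says that in every sufficiently high weight the bottom $\frac{p^2-1}{12}$ slots of an upper triangular basis are occupied by forms of valuations $0, 1, \dots, \frac{p^2-1}{12}-1$ with no gaps. Combined with Corollary~I-\ref{CorVal1}, which furnishes $E_{2,p}^{(0)}$ of valuation $0$, I would argue that multiplying a chosen upper triangular system $(E_{p+1,p}^{(s)})_{0\ie s < \frac{p^2-1}{12}}$ in weight $p+1$ by the appropriate power $[E_{2,p}^{(0)}]^{k-\frac{p+1}{2}}$ produces, in weight $2k$, a unitary system whose valuations are again exactly $0,1,\dots,\frac{p^2-1}{12}-1$ — because multiplication by the valuation-$0$ unitary form $E_{2,p}^{(0)}$ preserves valuations and unitarity. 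Hence these are legitimate representatives of the complementary vectors appearing in Theorem~I-\ref{ThmStruct1}, which is precisely the content of the first displayed equation of the theorem. The second displayed equation is then obtained by substituting this explicit complement into each summand of the iterated decomposition above, checking that in the $n$-th summand the relevant weight is $2k - n(p-1)$, so the exponent of $E_{2,p}^{(0)}$ is $(k - n\frac{p-1}{2}) - \frac{p+1}{2} = k - (n+1)\frac{p-1}{2} - 1$, matching the statement.

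Concretely the steps are: (1) invoke Theorem~I-\ref{ThmStruct1} with $N=p$, $\Phi=\Delta_p$ to get both decompositions with abstract complements; (2) invoke the previous Theorem to know the complement has dimension $\frac{p^2-1}{12}$ with valuations $0,\dots,\frac{p^2-1}{12}-1$; (3) invoke Corollary~I-\ref{CorVal1} to get the valuation-$0$ form $E_{2,p}^{(0)}$ and note multiplication by it is valuation- and unitarity-preserving; (4) conclude that $E_{p+1,p}^{(s)}[E_{2,p}^{(0)}]^{k-\frac{p+1}{2}}$ for $0\ie s<\frac{p^2-1}{12}$ is a valid complement in weight $2k$; (5) substitute into the iterated formula and simplify the exponent bookkeeping. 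The main obstacle is step (4)–(5): one must be careful that the condition $k\se\frac{p+1}{2}$ is exactly what makes the exponent $k-\frac{p+1}{2}$ nonnegative so the construction is defined, and that in the iterated sum every intermediate weight $2k-n(p-1)$ still satisfies $k-n\frac{p-1}{2}\se\frac{p+1}{2}$ for $n \ie q-1$ — this needs $k - (q-1)\frac{p-1}{2} \se \frac{p+1}{2}$, i.e. $r \se 1$ plus a shift, which holds since $r\se 1$ and the residual term $\Delta_p^q M_{2r}(\Gamma_0(p))$ absorbs the low-weight piece; verifying these index inequalities cleanly is the only real bookkeeping hazard, everything else being a mechanical transcription of earlier results.
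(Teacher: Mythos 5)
Your proposal follows essentially the same route as the paper: the paper likewise obtains this theorem by specializing Theorem~I-4 (the abstract structure theorem) to $\Phi=\Delta_p$ and then substituting the explicit complement $E_{p+1,p}^{(s)}[E_{2,p}^{(0)}]^{k-\frac{p+1}{2}}$, whose legitimacy rests on the preceding valuation theorem (no gaps among the first $\frac{p^2-1}{12}$ valuations for $k\se\frac{p+1}{2}$) together with Corollary~I-\ref{CorVal1}. Your remark about the exponent $k-\frac{p+1}{2}$ requiring $k\se\frac{p+1}{2}$ rather than the stated $k\se\frac{p-1}{2}$ is a fair catch of a boundary-index slip in the statement, but otherwise the argument matches the paper's.
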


Regarding the calculation of a unitary upper triangular basis ${\cal B}_{2k}(\Gamma_0(p))$, $k\se 1$, Theorem I-\ref{ThmStruct1} is now operational since the knowledge of all bases is reduced to that of the finite sequence of bases $({\cal B}_{2k}(\Gamma_0(p)))_{1\ie k \ie \frac{p+1}{2}}$. We will see in part II that this is possible using elliptical functions for $1\ie N\ie 10$.
\bs
\bs

\section{-- Strong modular units $\Delta_N$, $N$ positive integer}
\ms

In the previous paragraph, we obtained structured bases of $(M_{2k}(\Gamma_0(p)))_{k\in\N^*}$ when $p$ is prime. The important tool, which reduces the search from an infinity of bases to a finite number, is the existence of a strong $\Delta_p$ modular form. We will establish the existence of $\Delta_N$ in the general $N\se 1$ case.
\ms

There are sub-cases to consider. For example, the existence of $\Delta_{p^r}$ when $p$ is prime, of $\Delta_{p_1^{r_1} p_2^{r_2}}$ for $p_1$, $p_2$ different prime numbers $\ldots$ \ As one might expect, it is sometimes necessary to distinguish cases of prime factors equal to $2$ or $3$, and the case greater than $5$.
\ms

We will note in this part $\Delta_N$ a strong modular unit of the sequence $(M_{2k}(\Gamma_0(N)))_{k\in\N^*}$ while remaining consistent with the notation introduced in section 4.
\ms

Let's summarize, a little ahead of time, the knowledge about the $(\Delta_N)_{1\ie N\ie 10}$ functions (with minimum weight) that will be obtained in Part II :

\begin{equation}
\begin{array}{lcl}
\Delta_2(\tau) & = & \eta(\tau)^{-8} \eta(2\tau)^{16} = \ds q\prod_{k=1}^{+\infty} \frac{( 1-{q}^{2k})^{16}}{(1-{q}^{k}) ^{8}}\\
\Delta_3(\tau) & = & \ds \eta(\tau)^{-6} \eta(3\tau)^{18}
  = \ds {q}^{2}\prod _{k=1}^{+\infty} \frac{(1-{q}^{3k})^{18}}{(1-{q}^{k})^{6}}\\
\Delta_4(\tau) & = & \ds \eta(2\tau)^{-4} \eta(4\tau)^{8} = \ds q\prod_{k=1}^{+\infty} \frac{(1-{q}^{4n})^{8}}{(1-{q}^{2n})^{4}}\\
\Delta_5(\tau) & = & \ds \eta(\tau)^{-2} \eta(5\tau)^{10} = \ds {q}^{2}\prod_{n=1}^{+\infty} \frac{( 1-{q}^{5n})^{10}}{(1-{q}^{n})^{2}}\\
\Delta_6(\tau) &  = & \ds \eta(\tau)^2 \eta(2\tau)^{-4} \eta(3\tau)^{-6} \eta(6\tau)^{12} = \ds q^2\prod_{k=1}^{+\infty} \frac{(1-q^{k})^{2}(1-q^{6k})^{12}} {(1-q^{2k})^{4} (1-q^{3k})^{6}}\\
\Delta_7(\tau) & = & \ds \eta(\tau)^{-2} \eta(7\tau)^{14} = \ds q^4 \prod_{n=1}^{+\infty} \frac{(1-q^{7n})^{14}}{(1-q^n)^2}\\
\Delta_8(\tau) & = & \ds \eta(4\tau)^{-4} \eta(8\tau)^{8} = \ds q^2\prod_{k=1}^{+\infty} \frac{(1-{q}^{8n})^{8}}{(1-{q}^{4n})^{4}}\\
\Delta_9(\tau) & = & \ds \eta(3\tau)^{-2} \eta(9\tau)^{6} = \ds {q}^{2}\prod_{k=1}^{+\infty} \frac{(1-{q}^{9k})^{6}}{(1-{q}^{3k})^{2}}\\
\Delta_{10}(\tau) & = & \ds \eta(\tau)^2 \eta(2\tau)^{-4} \eta(5\tau)^{-10} \eta(10\tau)^{20} = \ds q^6 \prod_{k=1}^{+\infty} \frac{(1-{q}^{n})^{2} (1-{q}^{10n})^{20}}{(1-{q}^{2n})^{4} (1-{q}^{5n})^{10}}\\
\end{array}
\end{equation}
\ms

Note that $\ds \Delta_4(\tau) = \Delta_2(2\tau)^{1/2}$, $\ds \Delta_8(\tau) = \Delta_4(2\tau)$ and $\ds \Delta_9(\tau) = \Delta_3(3\tau)^{1/3}$.
\ms

The following result reduces the search for a strong modular unit at level $N$ when $N$ is a product of separate primes.

\begin{Lem}\label{Dilat}
Let $N$, $n$ be positive integers and $\Psi\in M_{2k}(\Gamma_0(N))$. Then the function $\Psi_n(\tau) = \Psi(n\tau)$ belongs to space
$M_{2k}(\Gamma_0(nN))$.
\end{Lem}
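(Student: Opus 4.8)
The plan is to verify conditions (i) and (ii) of Definition I-\ref{Def3} directly for $\Psi_n$, using that $\Psi$ already satisfies them for $\Gamma_0(N)$. Holomorphy of $\Psi_n$ on $\cal H$ is immediate since $\tau\mapsto n\tau$ maps $\cal H$ to $\cal H$ and $\Psi$ is holomorphic. So the substance is the transformation law and the behaviour at the cusps.

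First I would check the weak modularity $(\ref{MF})$. Take $\begin{pmatrix} a & b\\ c & d\end{pmatrix}\in \Gamma_0(nN)$, so $c\equiv 0\bmod nN$; write $c = nN c'$, hence in particular $n\mid c$, say $c = n\gamma$ with $\gamma = Nc'\equiv 0\bmod N$. The key computation is
\begin{equation*}
\Psi_n\pa{\frac{a\tau+b}{c\tau+d}} = \Psi\pa{n\cdot\frac{a\tau+b}{c\tau+d}} = \Psi\pa{\frac{a(n\tau)+nb}{\gamma(n\tau)+d}},
\end{equation*}
and one checks that $\begin{pmatrix} a & nb\\ \gamma & d\end{pmatrix}$ has determinant $ad - nb\gamma = ad-bc = 1$ and lies in $SL_2(\Z)$ with lower-left entry $\gamma\equiv 0\bmod N$, i.e. in $\Gamma_0(N)$. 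Applying $(\ref{MF})$ for $\Psi$ at level $N$ gives $\Psi\pa{\frac{a(n\tau)+nb}{\gamma(n\tau)+d}} = (\gamma n\tau + d)^{k}\Psi(n\tau) = (c\tau+d)^{k}\Psi_n(\tau)$, which is exactly $(\ref{MF})$ for $\Psi_n$ at level $nN$.

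Next I would check condition (ii), the existence of a finite limit at every cusp. Given $\begin{pmatrix} a & b\\ c & d\end{pmatrix}\in SL_2(\Z)$, set $g = \gcd(n,c)$ and write $n = g n_1$, $c = g c_1$ with $\gcd(n_1,c_1)=1$; then $nb$ and $n c/g$ interact so that one can factor $\begin{pmatrix} a & nb\\ c & d\end{pmatrix}$ — which need not be in $SL_2(\Z)$ — as $\begin{pmatrix} a & nb\\ c & d\end{pmatrix} = M \begin{pmatrix} g & 0\\ 0 & n/g\end{pmatrix}$ with $M\in SL_2(\Z)$ (a standard Smith-normal-form / Hermite argument: since $\gcd(n_1,c_1)=1$ one can complete to an integral unimodular matrix). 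Then $(c\tau+d)^{-k}\Psi_n\pa{\frac{a\tau+b}{c\tau+d}} = (c\tau+d)^{-k}\Psi\pa{n\frac{a\tau+b}{c\tau+d}}$, and writing $n\frac{a\tau+b}{c\tau+d}$ as the action of $M$ on $\frac{g\tau}{(n/g)}$ reduces the claimed limit to the limit of $\tau'\mapsto (c'\tau'+d')^{-k}\Psi\pa{\frac{a'\tau'+b'}{c'\tau'+d'}}$ (times an explicit nonzero power-of-$n$ factor coming from the scalar $\begin{pmatrix} g & 0\\ 0 & n/g\end{pmatrix}$) as $\tau'\to i\infty$, where $\begin{pmatrix} a' & b'\\ c' & d'\end{pmatrix} = M\in SL_2(\Z)$. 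That limit exists by condition (ii) for $\Psi$. Hence $\Psi_n$ satisfies (ii), so $\Psi_n\in M_{2k}(\Gamma_0(nN))$.

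The main obstacle is the bookkeeping in the second part: the matrix $\begin{pmatrix} a & nb\\ c & d\end{pmatrix}$ is generally not unimodular, so one must carefully extract the right elementary divisor $\begin{pmatrix} g & 0\\ 0 & n/g\end{pmatrix}$ and keep track of how the automorphy factor $(c\tau+d)^{-k}$ transforms under that decomposition. Alternatively — and this may be cleaner to write — one can bypass (ii) entirely by invoking the cusp-order formulation already used in the excerpt: on $q$-expansions the map $\tau\mapsto n\tau$ sends $q\mapsto q^n$, so $\Psi_n$ has a $q$-expansion with nonnegative exponents at $i\infty$, and at a cusp $-d/c$ of $\Gamma_0(nN)$ the local parameter pulls back compatibly, so finiteness of the order of $\Psi$ at the corresponding cusp of $\Gamma_0(N)$ forces finiteness of the order of $\Psi_n$. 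Either route works; I would present the matrix computation for $(\ref{MF})$ (which is short and clean) and then the $q$-expansion remark for holomorphy at the cusps.
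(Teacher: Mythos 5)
The paper gives no argument here at all: it delegates the lemma to Diamond--Shurman, Exercise 1.2.12. Your write-up is therefore supplying the proof the paper leaves implicit, and its overall structure is the standard and correct one: for weak modularity, the identity $n\cdot\frac{a\tau+b}{c\tau+d}=\frac{a(n\tau)+nb}{\gamma(n\tau)+d}$ with $c=n\gamma$, $N\mid\gamma$, together with $\det\begin{pmatrix} a & nb\\ \gamma & d\end{pmatrix}=ad-bc=1$, is exactly right; for the cusps, one factors an integral matrix of determinant $n$ through $SL_2(\Z)$. Two details in the cusp step need repair. First, the matrix you should factor is $\begin{pmatrix} na & nb\\ c & d\end{pmatrix}$, which has determinant $n$; the matrix $\begin{pmatrix} a & nb\\ c & d\end{pmatrix}$ you wrote has determinant $ad-nbc$, which is not $n$ in general. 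Second, with a unimodular factor on the left only you cannot in general reach a diagonal right factor $\begin{pmatrix} g & 0\\ 0 & n/g\end{pmatrix}$ --- that would require two-sided (Smith) reduction. Left reduction alone gives the Hermite form $\begin{pmatrix} na & nb\\ c & d\end{pmatrix}=M\begin{pmatrix} r & s\\ 0 & t\end{pmatrix}$ with $M=\begin{pmatrix} a' & b'\\ c' & d'\end{pmatrix}\in SL_2(\Z)$ and $rt=n$, which is all you need: $\tau'=(r\tau+s)/t\to i\infty$ as $\tau\to i\infty$, and the cocycle identity $c\tau+d=t\,(c'\tau'+d')$ turns the required limit for $\Psi_n$ into $t^{-k}$ times the limit of $(c'\tau'+d')^{-k}\Psi\pa{\frac{a'\tau'+b'}{c'\tau'+d'}}$, which exists by condition $(ii)$ for $\Psi$. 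With those corrections the argument is complete. Your alternative closing remark via $q$-expansions only treats the cusp $i\infty$ cleanly; the cusp-order formula $(\ref{K5})$ quoted in the paper is stated for $\eta$-quotients, not for arbitrary elements of $M_{2k}(\Gamma_0(N))$, so it cannot be invoked to dispose of the remaining cusps, and the matrix computation should be kept as the actual proof.
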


For example, a demonstration can be found in \cite{Diam}, exercise 1.2.12. The following result can be deduced.

\begin{Cor}\label{Dilat2}
Let $p_1,\ldots,p_n$ be all distinct prime numbers and $r_1,\ldots,r_n$ positive integers.

If the $\Phi(\tau)$ function is a $2k$ strong modular unit with respect to $\Gamma_0(p_1\ldots p_n)$, then $\Phi(p_1^{r_1-1}\ldots p_n^{r_n-1} \tau)$ is a $2k$ strong modular unit with respect to $\Gamma_0(p_1^{r_1}\ldots p_n^{r_n})$.
\end{Cor}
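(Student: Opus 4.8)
The plan is to deduce Corollary I-\ref{Dilat2} from Lemma I-\ref{Dilat} by an iterated-dilation argument, using the behaviour of the $\eta$-quotient criteria (Theorems I-\ref{ThUMF0}--I-\ref{ThUMF}) — or, more robustly, the intrinsic cusp description — to track the three defining properties of a strong modular unit under $\tau\mapsto n\tau$. Writing $M = p_1\cdots p_n$ and $n_0 = p_1^{r_1-1}\cdots p_n^{r_n-1}$, so that $n_0 M = p_1^{r_1}\cdots p_n^{r_n} =: N$, I would first apply Lemma I-\ref{Dilat} with the dilation factor $n_0$ to conclude that $\Phi_{n_0}(\tau) = \Phi(n_0\tau)$ lies in $M_{2k}(\Gamma_0(N))$. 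The weight is unchanged, so it remains only to check conditions $(i)$, $(ii)$, $(iii)$ of the definition of a strong modular unit at level $N$.

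For condition $(i)$, non-vanishing on $\mathcal H$, this is immediate: $\tau\mapsto n_0\tau$ maps $\mathcal H$ into $\mathcal H$, so $\Phi_{n_0}$ inherits non-vanishing from $\Phi$. For conditions $(ii)$ and $(iii)$, concerning the cusps, the cleanest route is to use the fact (already invoked in the excerpt) that the order of vanishing of an $\eta$-quotient at a cusp $-d/c$ depends only on $\gcd(c,N)$, together with the explicit formula $(\ref{K5})$. Since $\Phi$ is built from $\eta(m\tau)$ for $m\mid M$, the dilated function $\Phi_{n_0}$ is an $\eta$-quotient at level $N$ whose exponents $a_{n_0 m}$ (for $m\mid M$) equal the original $a_m$ and are $0$ on divisors of $N$ not of this shape. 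Plugging into $(\ref{K5})$ and simplifying the $\gcd$'s one finds that $\mathrm{ord}(\Phi_{n_0}, 1/c)$ for a divisor $c$ of $N$ is, up to the constant factor $N/M$, equal to $\mathrm{ord}(\Phi, 1/c')$ where $c' = \gcd(c, M)$ ranges over the divisors of $M$ as $c$ ranges over the divisors of $N$. Hence $\mathrm{ord}(\Phi_{n_0},1/c) = 0$ for $1\ie c \ie N-1$ (no vanishing at the finite cusps, since the corresponding $c' \ie M-1$ gives $\mathrm{ord}(\Phi,1/c')=0$) and $\mathrm{ord}(\Phi_{n_0},1/N) > 0$ (vanishing at infinity, since $c' = M$ gives $\mathrm{ord}(\Phi,1/M)>0$). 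This is exactly conditions $(iii)$ and $(ii)$ respectively.

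The main obstacle — really a bookkeeping obstacle rather than a conceptual one — is the $\gcd$ computation: one must verify carefully that as $c$ runs over divisors of $N = n_0 M$, the quantity $\gcd(c, n_0 m)$ appearing in the formula for $\Phi_{n_0}$ behaves so that the value of $\mathrm{ord}(\Phi_{n_0}, 1/c)$ really only sees $\gcd(c,M)$, and that the strict inequality at infinity is preserved and not accidentally weakened to an equality at some intermediate divisor. Here the hypothesis that $p_1,\dots,p_n$ are \emph{distinct} primes is what makes the arithmetic clean: it guarantees $\gcd(n_0, M) = 1$ is false in general but the relevant local factorisations at each prime $p_i$ decouple, so the order at $1/c$ factors as a product over $i$ of local contributions, each governed by $v_{p_i}(c)$, and one reduces to the one-prime case already settled in section 4.

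An alternative, shorter write-up avoids $\eta$-quotients entirely: one checks directly from condition $(ii)''$ of Definition I-\ref{Def3} that if $\Psi$ has a nonzero (resp. zero) limit at the cusp $r'$ of $\Gamma_0(M)$, then $\Psi(n_0\tau)$ has a nonzero (resp. zero) limit at each cusp $r$ of $\Gamma_0(N)$ lying above $r'$ under the natural map of cusps induced by $\tau\mapsto n_0\tau$, and that this map is surjective; combined with the fact that the unique cusp of $\Gamma_0(M)$ at which $\Phi$ vanishes is $i\infty$, and that $i\infty$ of $\Gamma_0(N)$ lies above $i\infty$ of $\Gamma_0(M)$, one gets $(ii)$ and $(iii)$ for $\Phi_{n_0}$ at once. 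Either way the proof is short; I would present the $\eta$-quotient version since all the $(\Delta_{p_i})$ in the excerpt are given explicitly as $\eta$-quotients and the reader already has formula $(\ref{K5})$ in hand.

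\begin{proof}
Set $M = p_1\cdots p_n$ and $n_0 = p_1^{r_1-1}\cdots p_n^{r_n-1}$, so that $n_0 M = p_1^{r_1}\cdots p_n^{r_n} =: N$. By Lemma I-\ref{Dilat} applied with dilation factor $n_0$, the function $\Phi_{n_0}(\tau) = \Phi(n_0\tau)$ belongs to $M_{2k}(\Gamma_0(N))$; in particular it satisfies $(\ref{MF})$ at level $N$ and has the same weight $2k$. It remains to check the three conditions defining a strong modular unit at level $N$.

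Condition $(i)$ is immediate, since $\tau\mapsto n_0\tau$ maps $\cal H$ to $\cal H$ and $\Phi$ does not vanish on $\cal H$; hence $\Phi_{n_0}$ does not vanish on $\cal H$.

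For $(ii)$ and $(iii)$ we use the cusp-order formula $(\ref{K5})$. Writing $\Phi(\tau) = \prod_{m\mid M}\eta(m\tau)^{a_m}$, the dilate is $\Phi_{n_0}(\tau) = \prod_{m\mid M}\eta(n_0 m\,\tau)^{a_m}$, an $\eta$-quotient at level $N$ whose exponent on the divisor $n_0 m$ of $N$ is $a_m$ (for $m\mid M$) and is $0$ on any divisor of $N$ not of the form $n_0 m$ with $m\mid M$. As recalled in the excerpt, it suffices to evaluate the order at the cusps $1/c$ for the divisors $c$ of $N$, $1\ie c\ie N$. Fix such a $c$ and set $c' = \gcd(c,M)$, a divisor of $M$. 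For $m\mid M$ one has $\gcd(c, n_0 m) = \gcd(c, n_0)\cdot\gcd\!\bigl(c/\gcd(c,n_0),\, m\bigr)$ after using that $p_1,\dots,p_n$ are distinct primes, whence, working prime by prime at each $p_i$, $\gcd(c, n_0 m)^2/(n_0 m) = \bigl(\gcd(c,n_0)^2/n_0\bigr)\cdot\bigl(\gcd(c', m)^2/m\bigr)$. Substituting into $(\ref{K5})$ gives
\begin{equation*}
\mathrm{ord}\!\left(\Phi_{n_0},\tfrac1c\right)
= \frac{N}{24}\cdot\frac{\gcd(c,n_0)^2}{n_0}\sum_{m\mid M}\frac{\gcd(c',m)^2}{m}\,a_m
= \frac{N}{M}\cdot\frac{\gcd(c,n_0)^2}{n_0}\cdot\mathrm{ord}\!\left(\Phi,\tfrac1{c'}\right),
\end{equation*}
the positive constant $\dfrac{N}{M}\cdot\dfrac{\gcd(c,n_0)^2}{n_0}$ being irrelevant to the sign. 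As $c$ runs over the divisors of $N$, $c' = \gcd(c,M)$ runs over the divisors of $M$, and $c' = M$ exactly when $c = N$. Since $\Phi$ is a strong modular unit at level $M$, we have $\mathrm{ord}(\Phi,1/c') = 0$ for $1\ie c'\ie M-1$ and $\mathrm{ord}(\Phi,1/M) > 0$. Therefore $\mathrm{ord}(\Phi_{n_0},1/c) = 0$ for $1\ie c\ie N-1$, which is condition $(iii)$, and $\mathrm{ord}(\Phi_{n_0},1/N) > 0$, which is condition $(ii)$ (recalling that the infinite cusp corresponds to $c = N$, since $I_2$ and $\begin{pmatrix}1&0\\N&1\end{pmatrix}$ represent the same $\Gamma_0(N)$ class). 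Thus $\Phi_{n_0}(\tau) = \Phi(p_1^{r_1-1}\cdots p_n^{r_n-1}\,\tau)$ is a $2k$ strong modular unit at level $N = p_1^{r_1}\cdots p_n^{r_n}$.
\end{proof}
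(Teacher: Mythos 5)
Your strategy (dilate via Lemma I-\ref{Dilat}, then track cusp orders) is in the spirit of the paper's own proof --- indeed your closing ``alternative write-up'' is essentially what the paper does, and it has the advantage of not assuming $\Phi$ is an $\eta$-quotient, whereas the corollary is stated for an arbitrary strong modular unit. But the $\eta$-quotient computation you actually present breaks down exactly at the step you yourself flagged as the main obstacle. The identity
\begin{equation*}
\frac{\gcd(c,n_0m)^2}{n_0m}=\frac{\gcd(c,n_0)^2}{n_0}\cdot\frac{\gcd(c',m)^2}{m},
\qquad c'=\gcd(c,M),
\end{equation*}
is false, and so is the claim that $c'=\gcd(c,M)$ equals $M$ only when $c=N$: as soon as some $r_i\se 2$, every divisor $c$ of $N$ that is a multiple of $M$ (for instance $c=M$ itself, which is then $<N$) gives $c'=M$. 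Concretely, take $M=2$, $\Phi=\Delta_2=\eta(2\tau)^{16}\eta(\tau)^{-8}$, $n_0=2$, $N=4$, $c=2$. Formula $(\ref{K5})$ applied to $\Phi_{n_0}=\eta(4\tau)^{16}\eta(2\tau)^{-8}$ gives $\mathrm{ord}(\Phi_{n_0},1/2)=\frac{4}{24}\pa{\frac{4}{2}(-8)+\frac{4}{4}(16)}=0$, whereas your formula, with $c'=\gcd(2,2)=2=M$, predicts a strictly positive multiple of $\mathrm{ord}(\Delta_2,1/2)=\nu(\Delta_2)=1$. Followed literally, your computation would show that $\Phi_{n_0}$ vanishes at the finite cusp $1/M$ of $\Gamma_0(N)$, i.e.\ that the corollary is \emph{false} whenever $N\not=M$; so this is not a harmless bookkeeping slip.

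The repair is to replace $c'=\gcd(c,M)$ by $c''=c/\gcd(c,n_0)$, which \emph{is} a divisor of $M$ (check prime by prime: $v_{p_i}(c'')=v_{p_i}(c)-\min(v_{p_i}(c),r_i-1)\in\{0,1\}$), satisfies the corrected factorisation
\begin{equation*}
\frac{\gcd(c,n_0m)^2}{n_0m}=\frac{\gcd(c,n_0)^2}{n_0}\cdot\frac{\gcd(c'',m)^2}{m},
\end{equation*}
and equals $M$ if and only if $v_{p_i}(c)=r_i$ for every $i$, i.e.\ if and only if $c=N$. With this substitution your argument goes through and yields the stated conclusion for $\eta$-quotients. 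To cover the corollary as stated (an arbitrary strong modular unit), you should either promote your second, intrinsic argument --- the one comparing limits at cusps of $\Gamma_0(N)$ with limits of $\Phi$ at the cusps of $\Gamma_0(M)$ lying below them --- to the main proof, which is what the paper does, or note explicitly that every strong modular unit used in the sequel is an $\eta$-quotient.
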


\begin{proof}
According to the lemma I-\ref{Dilat}, if $\tilde{\Phi}(\tau) = \Phi(p_1^{r_1-1}\ldots p_n^{r_n-1} \tau)$, then $\tilde{\Phi} \in M_{2k}(\Gamma_0(p_1^{r_1}\ldots p_n^{r_n}))$.

Besides, $\tilde{\Phi}$ does not cancel on ${\cal H}$ and Fourier serial development of $(c\tau+d)^{-2k}\Phi(\frac{a\tau+b}{c\tau+d})$ indicates that the $\tilde{\Phi}$ order at a given cusp is null if and only if the $\Phi$ order in that same cusp is also null, which gives the result.
\end{proof}
\ms

Following the definition $(\ref{DeltaEta})$ of $\Delta_p$ for $p\se 5$ prime, we ask

\begin{Not}
For all $q\in \N^*$,
\begin{equation}
\forall \tau \in {\cal H}, \ \ \eta_q(\tau) = \eta(q\tau)^q.
\end{equation}
\end{Not}

\bi
\item[$\bullet$] \textbf{The $N = p^r$ case, $p$ prime and $r > 0$}
\ms

The result is as follows:
\ms

\begin{Thm}\label{Thp1}
The following $\Delta_{p^r}$ functions are strong modular units with respect to $\Gamma_0(p^r)$.

When $p=2$,
\begin{equation}
\Delta_2 = \pa{\frac{\eta_2}{\eta_1}}^8, \ \ \text{and} \ \ \forall r\se 2, \ \ \Delta_{2^r}(\tau) = \Delta_4(2^{r-2}\tau) = \pa{\frac{\eta_2}{\eta_1}}^4 (2^{r-1}\tau) \in M_{2}(\Gamma_0(2^r)).
\end{equation}

When $p=3$,
\begin{equation}
\Delta_3 = \pa{\frac{\eta_3}{\eta_1}}^6, \ \ \text{and} \ \ \forall r\se 2, \ \ \Delta_{3^r}(\tau) = \Delta_9(3^{r-2}\tau) = \pa{\frac{\eta_3}{\eta_1}}^2 (3^{r-1}\tau) \in M_{2}(\Gamma_0(3^r)).
\end{equation}

When $p\se 5$ prime,
\begin{equation}
\forall r\in \N^*, \ \ \Delta_{p^r}(\tau) = \Delta_p(p^{r-1}\tau) = \pa{\frac{\eta_p}{\eta_1}}^2 (p^{r-1}\tau)\in M_{p-1}(\Gamma_0(p^r)).
\end{equation}
\end{Thm}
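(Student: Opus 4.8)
The plan is to treat the three families $p=2$, $p=3$ and $p\se 5$ together, the only branching being the arithmetic of condition $(ii)$ of Theorem I-\ref{ThUMF}. For $p\se 5$ there is a one-line argument: $\Delta_p$ of $(\ref{DeltaEta})$ has already been shown to be a strong modular unit of $M_{p-1}(\Gamma_0(p))$, so Corollary I-\ref{Dilat2} applied with $n=1$, $p_1=p$, $r_1=r$ gives at once that $\Delta_{p^r}(\tau):=\Delta_p(p^{r-1}\tau)$ is a strong modular unit at level $p^r$, and Lemma I-\ref{Dilat} places it in $M_{p-1}(\Gamma_0(p^r))$. For $p\in\{2,3\}$ the case $r=1$ is nothing but the facts $\Delta_2\in M_4(\Gamma_0(2))$ and $\Delta_3\in M_6(\Gamma_0(3))$ proved in section 4, so only $r\se 2$ needs an argument; there Corollary I-\ref{Dilat2} is not available with the small weight claimed (there is no strong unit of weight $2$ at the squarefree level $p$), so I would feed the claimed $\eta$-quotient directly into Theorem I-\ref{ThUMF}.

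Concretely, I would write $\Delta_{p^r}(\tau)=\eta(p^{r-1}\tau)^{-e}\,\eta(p^r\tau)^{pe}$ with $e=4$ for $p=2$ and $e=2$ for $p=3$ (and, formally, $e=2$ reproduces the $p\se 5$ shape as well). This is an $\eta$-quotient at level $N=p^r$ with $a_{p^{r-1}}=-e$, $a_{p^r}=pe$ and all other $a_m=0$, of weight $\frac{1}{2}(pe-e)=\frac{e(p-1)}{2}$, which is $2$ for $p\in\{2,3\}$ and $p-1$ for $p\se 5$. Then I check the three conditions of Theorem I-\ref{ThUMF}. For $(i)$: only $m=p^{r-1}$ contributes a nontrivial $m'=N/m=p$, so $\prod_{m\mid N}(m')^{a_m}=p^{-e}\in\Q^2$ since $e$ is even. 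For $(ii)$: $\frac{1}{24}\sum_{m\mid N}m\,a_m=\frac{1}{24}\pa{-e\,p^{r-1}+pe\,p^r}=\frac{e\,p^{r-1}(p^2-1)}{24}$, which is a positive integer exactly when $r\se 2$ for $p\in\{2,3\}$ and for every $r\se 1$ when $p\se 5$ (since $24\mid p^2-1$ there); this integer is $\nu(\Delta_{p^r})$. For $(iii)$: for $c\in\llbracket1,p^r-1\rrbracket$ the exponent of $p$ in $c$ is at most $r-1$, hence $gcd(c,p^{r-1})=gcd(c,p^r)$, and since $a_{p^r}=p\cdot(-a_{p^{r-1}})$ we obtain $\sum_{m\mid N}\frac{gcd(c,m)^2}{m}a_m=gcd(c,p^{r-1})^2\pa{\frac{-e}{p^{r-1}}+\frac{pe}{p^r}}=0$. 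Theorem I-\ref{ThUMF} then gives that $\Delta_{p^r}$ is a strong modular unit at level $p^r$ of the stated weight.

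The last step is to reconcile this $\eta$-quotient with the forms printed in the statement: $\eta(p^{r-1}\tau)^{-e}\eta(p^r\tau)^{pe}=\pa{\frac{\eta_p}{\eta_1}}^{e}(p^{r-1}\tau)$, which equals $\Delta_4(2^{r-2}\tau)$ for $p=2$, $\Delta_9(3^{r-2}\tau)$ for $p=3$, and $\Delta_p(p^{r-1}\tau)$ for $p\se 5$ --- consistent, in the last case, with the Corollary I-\ref{Dilat2} route. The point I would emphasise rather than compute is $(ii)$: the single quantity $e\,p^{r-1}(p^2-1)/24$ is what forces the threshold $r\se 2$ when $p\in\{2,3\}$ and what pins down the valuation of each $\Delta_{p^r}$; and there is no tension with the rigidity of strong modular units established in section 2, because for $p\in\{2,3\}$ a strong unit of weight $2$ genuinely fails to exist at the squarefree level $p$ and appears only at the higher prime powers. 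Beyond this bookkeeping I expect no real obstacle; the most delicate step is merely keeping the three cases aligned and checking $(iii)$ uniformly over all $c$ in range.
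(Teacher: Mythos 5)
Your proposal is correct, and for $p\se 5$ it coincides with the paper's argument (apply Corollary I-\ref{Dilat2} to $\Delta_p$). Where you genuinely diverge is the case $p\in\{2,3\}$, $r\se 2$: the paper verifies the hypotheses of Theorem I-\ref{ThUMF} only for the two fixed $\eta$-quotients $\Delta_4=\eta(2\tau)^{-4}\eta(4\tau)^8$ and $\Delta_9=\eta(3\tau)^{-2}\eta(9\tau)^6$, at levels $4$ and $9$, and then invokes the dilation corollary to reach $\Gamma_0(2^r)$ and $\Gamma_0(3^r)$; you instead feed the general quotient $\eta(p^{r-1}\tau)^{-e}\eta(p^r\tau)^{pe}$ directly into Theorem I-\ref{ThUMF} at level $p^r$ for every $r\se 2$. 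Your computations check out: condition $(i)$ gives $p^{-e}\in\Q^2$ since $e$ is even, condition $(ii)$ gives $\nu(\Delta_{p^r})=e\,p^{r-1}(p^2-1)/24$ (which correctly forces $r\se 2$ when $p\in\{2,3\}$ and reproduces the valuations $2^{r-2}$, $2\cdot 3^{r-2}$, $\frac{1}{12}p^{r-1}(p^2-1)$ of the paper's table), and condition $(iii)$ follows from $\gcd(c,p^{r-1})=\gcd(c,p^r)$ for $1\ie c\ie p^r-1$ together with $a_{p^r}/p^r=-a_{p^{r-1}}/p^{r-1}$. What your route buys is worth noting: Corollary I-\ref{Dilat2} is stated for a strong unit at the \emph{squarefree} level $p_1\ldots p_n$, whereas the paper applies it starting from level $4$ (resp.\ $9$), which is strictly a mild extension of the stated corollary (harmless, since the dilation argument of Lemma I-\ref{Dilat} works for any base level, but not literally covered). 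Your direct verification at level $p^r$ sidesteps that issue entirely, at the modest cost of carrying the uniform-in-$r$ bookkeeping for condition $(iii)$; the paper's route reduces everything to two finite computations. Your side remark that no strong unit of weight $2$ exists at level $2$ or $3$, so there is no conflict with the rigidity theorem of section 2, is also consistent with the paper.
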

\ms

\begin{proof}
The $\Delta_p$ case, $p$ prime was processed. According to corollary I-\ref{Dilat2}, it is enough to verify that $\Delta_4$ and $\Delta_9$ are strong modular units of $M_{2}(\Gamma_0(4))$ and $M_{2}(\Gamma_0(9))$ respectively.
\ms

The $\Delta_4(\tau) = \ds \eta(2\tau)^{-4} \eta(4\tau)^{8}$ function is an  $\eta$-quotient at level $N=4$, with divisors $m\in \{1,2,4\}$ with $a_1 = 0$, $a_2 = -4$, $a_4 = 8$ and weight $2k = \frac{1}{2}(a_1+a_2+a_4) = 2$.

It verifies the hypotheses of the application of Theorem I-\ref{ThUMF} :
\begin{equation*}
\prod_{m\mid 4} m'^{a_m} = 2^{-4} \in \Q^2, \ \ \frac{1}{24}\sum_{m\mid 4} m a_m = 1\in \N^* \ \ \text{and} \ \ \sum_{m\mid 4} \frac{a_m}{m} = 0.
\end{equation*}
The last equality gives the condition $(iii)$ of Theorem for $c=1$ and $c=3$, the $c=2$ case comes from the calculation
\begin{equation*}
\sum_{m\mid 4} \frac{gcd(2,m)^2}{m} a_m = 0.
\end{equation*}

The $\Delta_9(\tau) = \ds \eta(3\tau)^{-2} \eta(9\tau)^{6}$ function is an $\eta$-quotient at level $N=9$, with divisors $m\in \{1,3,9\}$ and $a_1 = 0$, $a_3 = -2$, $a_9 = 6$ of weight $2k = \frac{1}{2}(a_1+a_3+a_9) = 2$.

It verifies the hypotheses of the application of Theorem I-\ref{ThUMF} :
\begin{equation*}
\prod_{m\mid 9} m'^{a_m} = 3^{-2} \in \Q^2, \ \ \frac{1}{24}\sum_{m\mid 9} m a_m = 2\in \N^* \ \ \text{and} \ \ \sum_{m\mid 9} \frac{a_m}{m} = 0.
\end{equation*}
The last equality gives the condition $(iii)$ of Theorem for $c\in\{1,2,4,5,7,8\}$, the $c=3$ and $c=6$ cases come from the calculations
\begin{equation*}
\sum_{m\mid 9} \frac{gcd(3,m)^2}{m} a_m = \sum_{m\mid 9} \frac{gcd(6,m)^2}{m} a_m = 0.
\end{equation*}
\end{proof}
\bs

\item[$\bullet$] \textbf{The $N = p_1^{r_1}p_2^{r_2}$ case, with $p_1, p_2$ distinct primes and $(r_1,r_2)\in \N^{*2}$}
\ms

The guide to understanding the construction of $\Delta_{p_1^{r_1}p_2^{r_2}}$ is based on equality

\begin{equation}
(p_1-1)(p_2-1) = +p_1p_2 - p_1 - p_2 + 1 \label{dec2}
\end{equation}
which will be generalized later.
\ms

The algorithm to build $\Delta_N$ is as follows.
\be
\item
When $N = p_1.p_2$, terms with $+$ indicate 'elementary units' $\eta_q$ placed in the numerator of the strong modular unit and terms preceded by the $-$ sign indicate 'elementary units' $\eta_q$ placed in the denominator.
\item
When $N = p_1^{r_1}.p_2^{r_2}$, corollary I-\ref{Dilat2} applies and $\Delta_{p_1^{r_1}.p_2^{r_2}}(\tau) = \Delta_{p_1p_2}(p_1^{r_1-1}.p_2^{r_2-1}\tau)$.
\ee
\ms

We have the following generic result:
\ms

\begin{Thm}\label{Thp1p2}
$ $

\be
\item[]
Let $p\se 3$ be a prime and $(r_1,r_2)\in \N^{*2}$:
\begin{equation}
\Delta_{2^{r_1}p^{r_2}}(\tau) = \pa{\frac{\eta_1\, \eta_{2p}} {\eta_{2}^{2}\, \eta_{p}}}^2(2^{r_1 -1} p^{r_2 -1} \tau) \in M_{(p-1)}(\Gamma_0(2^{r_1}p^{r_2})).
\label{2p}
\end{equation}

\item[]
Let $p_1\se 3$ and $p_2\se 3$ be two distinct primes and $(r_1,r_2)\in \N^{*2}$:

\begin{equation}
\Delta_{p_1^{r_1}p_2^{r_2}}(\tau) = \frac{\eta_1\, \eta_{p_1p_2}} {\eta_{p_1}\, \eta_{p_2}}(p_1^{r_1 -1} p_2^{r_2 -1} \tau) \in M_{\frac{1}{2}(p_1-1)(p_2-1)} (\Gamma_0(p_1^{r_1}p_2^{r_2})).
\label{p1p2}
\end{equation}
\ee

These functions are strong modular units of the corresponding modular spaces.
\end{Thm}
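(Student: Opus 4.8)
The plan is to prove both statements by the same two-step scheme: first establish the squarefree base case $r_1=r_2=1$ by writing the candidate as an $\eta$-quotient and checking the three hypotheses of Theorem I-\ref{ThUMF}, then obtain the general case by one application of Corollary I-\ref{Dilat2}. In the second statement I would set $\Phi=\eta_1\eta_{p_1p_2}/(\eta_{p_1}\eta_{p_2})$ for the undilated function; unwinding $\eta_q(\tau)=\eta(q\tau)^q$, this is an $\eta$-quotient at level $p_1p_2$ with divisor-exponents $a_1=1$, $a_{p_1}=-p_1$, $a_{p_2}=-p_2$, $a_{p_1p_2}=p_1p_2$. In the first statement I would set $\Phi=\bigl(\eta_1\eta_{2p}/(\eta_2\eta_p)\bigr)^2$, an $\eta$-quotient at level $2p$ with $a_1=2$, $a_2=-4$, $a_p=-2p$, $a_{2p}=4p$; the square is not cosmetic — without it the weight is $\frac{p-1}{2}$, which need not be an even integer, and hypothesis (i) of Theorem I-\ref{ThUMF} fails (the exponent of $p$ in $\prod_m(N/m)^{a_m}$ comes out odd). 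In either case the expansion $\sum_m a_m=(p_1-1)(p_2-1)$, resp. $2(p-1)$, is just the identity $(\ref{dec2})$, and dividing by $2$ gives the announced weight.

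Next I would dispatch hypotheses (i) and (ii) of Theorem I-\ref{ThUMF}. For (i), a direct computation shows $\prod_{m\mid N}(N/m)^{a_m}$ is a monomial in the primes dividing $N$ all of whose exponents are even (in the second case $p_1^{\,1-p_2}p_2^{\,1-p_1}$, in the first $2^{\,2-2p}p^{-2}$), hence a square in $\Q$. For (ii), $\frac{1}{24}\sum_m ma_m$ equals $\frac{1}{24}(p_1^2-1)(p_2^2-1)$, resp. $\frac{1}{4}(p^2-1)$; this is a positive integer because $8\mid p^2-1$ for every odd prime $p$, and, in the two-odd-primes case, because the primes are \emph{distinct} at most one of them equals $3$, so the other supplies the missing factor $3$ and the product is divisible by $24$. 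This is the only place the distinctness hypothesis is used.

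The substantive step is hypothesis (iii): $\sum_{m\mid N}\frac{\gcd(c,m)^2}{m}a_m=0$ for all $c\in\llbracket 1,N-1\rrbracket$. Here I would invoke $\gcd(c,m)=\gcd(\gcd(c,N),m)$, which reduces the check to the finitely many values $g=\gcd(c,N)$, namely $g\in\{1,p_1,p_2\}$ in the second case and $g\in\{1,2,p\}$ in the first (the value $g=N$ is excluded since $c<N$). For each such $g$ the sum is a one-line expression that vanishes identically — the exponents $a_m$ were engineered for precisely this telescoping, which is once more the combinatorial content of $(\ref{dec2})$. This case analysis carries essentially all of the (elementary) computation and is the only point at which the exact values of the $a_m$ matter.

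With (i)--(iii) in hand, Theorem I-\ref{ThUMF} gives that $\Phi$ is a strong modular unit at level $p_1p_2$, resp. $2p$, of the stated weight. Corollary I-\ref{Dilat2}, applied with the dilation factor $p_1^{\,r_1-1}p_2^{\,r_2-1}$, resp. $2^{\,r_1-1}p^{\,r_2-1}$, then upgrades this to a strong modular unit at level $p_1^{r_1}p_2^{r_2}$ of the same weight — exactly the function displayed. I expect no serious obstacle here beyond two points that must be stated explicitly: that squaring in the first case is forced (by hypothesis (i) and by even weight), and that the distinctness of $p_1,p_2$ is exactly what keeps hypothesis (ii) valid; the remainder is bookkeeping with $\eta$-quotient exponents.
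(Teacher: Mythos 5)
Your proposal is correct and follows essentially the same route as the paper: reduce to the squarefree cases $2p$ and $p_1p_2$ via Corollary I-\ref{Dilat2}, write each candidate as an $\eta$-quotient with exponents $a_1=2$, $a_2=-4$, $a_p=-2p$, $a_{2p}=4p$ (resp.\ $a_1=1$, $a_{p_1}=-p_1$, $a_{p_2}=-p_2$, $a_{p_1p_2}=p_1p_2$), and check the three hypotheses of Theorem I-\ref{ThUMF}, with condition (iii) reduced to the gcd classes $\{1,2,p\}$ (resp.\ $\{1,p_1,p_2\}$). Your remark that the square in the $2p$ case is forced by hypothesis (i) matches the paper's own observation that the square root fails condition (i).
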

\ms

\begin{proof}
Given the corollary I-ref{Dilat2}, it is to show that $\Delta_{2p}$ and $\Delta_{p_1p_2}$ are strong modular units in $M_{(p-1)}(\Gamma_0(2p))$ and $M_{\frac{1}{2}(p_1-1)(p_2-1)} (\Gamma_0(p_1p_2))$ respectively.
\ms

The function
\begin{equation*}
\Delta_{2p}(\tau) = \ds \eta(\tau)^{2} \eta(2\tau)^{-4} \eta(p\tau)^{-2p} \eta(2p\tau)^{4p}
\end{equation*}
is an $\eta$-quotient at level $2p$, with divisors $m\in \{1,2,p,2p\}$ and $a_1 = 2$, $a_2 = -4$, $a_p = -2p$, $a_{2p} = 4p$ with weight $2k = \frac{1}{2}(a_1+a_2+a_p+a_{2p}) = p-1$.

It verifies the hypotheses of application of Theorem I-\ref{ThUMF} :
\begin{equation*}
\prod_{m\mid 2p} m'^{a_m} = 2^{-2(p-1)} p^{-2} \in \Q^2, \ \ \frac{1}{24}\sum_{m\mid 2p} m a_m = \frac{p^2 - 1}{4}\in \N^* \ \ \text{et} \ \ \sum_{m\mid 2p} \frac{a_m}{m} = 0.
\end{equation*}

The last equality gives the condition $(iii)$ of Theorem I-\ref{ThUMF} for $c\in\llbracket1,2p-1\rrbracket-\{2,p\}$, the $c=2$ and $c=p$ cases come from the calculations
\begin{equation*}
\sum_{m\mid 2p} \frac{gcd(2,m)^2}{m} a_m = \sum_{m\mid 2p} \frac{gcd(p,m)^2}{m} a_m = 0.
\end{equation*}

Note that the square root of this function does not verify the $(i)$ condition of Theorem I-\ref{ThUMF}.
\ms

When $p_1\se 3$ and $p_2\se 3$ are distinct primes, the function
\begin{equation*}
\Delta_{p_1p_2}(\tau) = \ds \eta(\tau)^{1} \eta(p_1\tau)^{-p_1} \eta(p_2\tau)^{-p_2} \eta(p_1p_2\tau)^{p_1p_2}
\end{equation*}
is an $\eta$-quotient at level $N=p_1p_2$, with divisors $m\in \{1,p_1,p_2,p_1p_2\}$ with $a_1 = 1$, $a_{p_2} = -p_1$, $a_{p_2} = -p_2$, $a_{p_1p_2} = p_1p_2$ and of weight $2k = \frac{1}{2}(a_1+a_{p_1}+a_{p_2}+a_{p_1p_2}) = \frac{1}{2}(p_1-1)(p_2-1)$.

It verifies the hypotheses of the application of Theorem I-\ref{ThUMF} :
\begin{equation*}
\prod_{m\mid p_1p_2} m'^{a_m} = p_1^{-(p_2-1)} p_2^{-(p_1-1)} \in \Q^2, \ \ \frac{1}{24}\sum_{m\mid p_1p_2} m a_m = \frac{(p_1^2 - 1)(p_2^2-1)}{24}\in \N^* \ \ \text{and} \ \ \sum_{m\mid p_1p_2} \frac{a_m}{m} = 0.
\end{equation*}

This time again, the last equality gives the condition $(iii)$ of Theorem for $c\in\llbracket1,p_1p_2-1\rrbracket-\{p_1,p_2\}$, the $c=p_1$ and $c=p_2$ cases come from direct calculations
\begin{equation*}
\sum_{m\mid p_1p_2} \frac{gcd(p_1,m)^2}{m} a_m = \sum_{m\mid p_1p_2} \frac{gcd(p_2,m)^2}{m} a_m = 0.
\end{equation*}\ms
\end{proof}
\bs

The result can be unified by proposing $\ds \pa{\frac{\eta_1\, \eta_{p_1p_2}}{\eta_{p_1}\, \eta_{p_2}}}^2(p_1^{r_1 -1} p_2^{r_2 -1} \tau) \in M_{(p_1-1)(p_2-1)} (\Gamma_0(p_1^{r_1}p_2^{r_2}))$ for strong modular unit for all prime numbers $p_1 \not= p_2$. For all that,
The $(\ref{p1p2})$ relationship enables one to halve the weight of the strong modular unit retained when $2$ is not one of the primary factors, which will be useful when searching for bases for example.
\ms

For the $(\ref{2p})$ relationship, the valuation of $\Delta_{2^{r_1}p^{r_2}}$ is:

\begin{equation}
\nu(\Delta_{2^{r_1}p^{r_2}}) = 2^{r_1-3}p^{r_2-1} (p^{2}-1),
\end{equation}

for the $(\ref{p1p2})$ relationship, the valuation of $\Delta_{p_1^{r_1}p_2^{r_2}}$ is:

\begin{equation}
\nu(\Delta_{p_1^{r_1}p_2^{r_2}}) = p_1^{r_1-1}p_2^{r_2-1} \frac{(p_1^{2}-1)(p_2^{2}-1)}{24}
\end{equation}

which are always integers.
\bs

Let us give some examples.
\be
\item[-]
For $N = 3.5 = 15$,
\begin{equation}
\begin{array}{lcl}
\Delta_{15}(\tau) & = & \ds \frac{\eta(\tau) \eta(15\tau)^{15}}{\eta(3\tau)^{3} \eta(5\tau)^{5}} \in M_{4}(\Gamma_0(15))\\
& = & \ds q^{8}\prod_{n=1}^{+\infty} (1-q^{n}) (1-q^{3n})^{-3} (1-q^{5n})^{-5} (1-q^{15n})^{15}.
\end{array}
\end{equation}

\item[-]
For $N = 5.7 = 35$,
\begin{equation}
\begin{array}{lcl}
\Delta_{35}(\tau) & = & \ds \frac{\eta(\tau) \eta(35\tau)^{35}}{\eta(5\tau)^{5} \eta(7\tau)^{7}} \in M_{12}(\Gamma_0(35))\\
& = & \ds q^{48}\prod_{n=1}^{+\infty} (1-q^{n}) (1-q^{5n})^{-5} (1-q^{7n})^{-7} (1-q^{35n})^{35}.
\end{array}
\end{equation}

\item[-]
For $N = 2^2.3^2 = 36$,
\begin{equation}
\begin{array}{lcl}
\Delta_{36}(\tau) & = & \ds \frac{\eta(6\tau)^2 \eta(36\tau)^{12}}{\eta(12\tau)^{4} \eta(18\tau)^{6}} \in M_{2}(\Gamma_0(36))\\
& = & \ds q^{12}\prod_{n=1}^{+\infty} (1-q^{6n})^{2} (1-q^{12n})^{-4} (1-q^{18n})^{-6} (1-q^{36n})^{12}.
\end{array}
\end{equation}
\ee
\bs
\ms

\item[$\bullet$] \textbf{The $N = p_1^{r_1}p_2^{r_2}p_3^{r_3}$ case, $p_1 < p_2 < p_3$ distinct primes, $(r_1,r_2,r_3)\in \N^{*3}$}
\ms

Let's extend equality $(\ref{dec2})$ :
\begin{equation}
(p_1-1)(p_2-1)(p_3-1) = +p_1p_2p_3 - p_1p_2 - p_2p_3 - p_3p_1 + p_1 + p_3 + p_3 -1 \label{dec3}
\end{equation}
which will still be widespread.

It is always a question of placing the modular elements linked to the $+$ signs in the numerator of the modular unit searched and those linked to the $-$ sign in the denominator.
\ms

On a le résultat suivant :
\ms

\begin{Thm}\label{Thp1p2p3}
Let $p_1$, $p_2$ and $p_3$ be distinct prime numbers, $(r_1,r_2,r_3)\in \N^{*3}$.

For $N = p_1^{r_1}p_2^{r_2}p_3^{r_3}$, the function
\begin{equation}
\Delta_{N}(\tau) = \frac{\eta_{p_1}\, \eta_{p_2}\, \eta_{p_3}\, \eta_{p_1p_2p_3}} {\eta_1\, \eta_{p_1p_2}\, \eta_{p_2p_3}\, \eta_{p_1p_3}}\, \pa{\frac{N\tau}{p_1p_2p_3}} \in M_{\frac{1}{2} (p_1-1)(p_2-1)(p_3-1)} (\Gamma_0(N)). \label{p1p2p3}
\end{equation}
is a strong $N$ level modular unit.
\end{Thm}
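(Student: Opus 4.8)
The plan is to reduce, via Corollary I-\ref{Dilat2}, to the case $r_1=r_2=r_3=1$, i.e.\ $N=p_1p_2p_3$, so that it suffices to check that
\[
\Delta_{p_1p_2p_3}(\tau) = \frac{\eta(p_1\tau)^{p_1}\,\eta(p_2\tau)^{p_2}\,\eta(p_3\tau)^{p_3}\,\eta(p_1p_2p_3\tau)^{p_1p_2p_3}}{\eta(\tau)\,\eta(p_1p_2\tau)^{p_1p_2}\,\eta(p_2p_3\tau)^{p_2p_3}\,\eta(p_1p_3\tau)^{p_1p_3}}
\]
is a strong modular unit at level $p_1p_2p_3$; then $\Delta_N(\tau)=\Delta_{p_1p_2p_3}(p_1^{r_1-1}p_2^{r_2-1}p_3^{r_3-1}\tau)$ does the job for general $N$. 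So throughout I read off the exponents: with $m$ running over the $8$ divisors of $p_1p_2p_3$, one has $a_1=-1$, $a_{p_i}=p_i$, $a_{p_ip_j}=-p_ip_j$, $a_{p_1p_2p_3}=p_1p_2p_3$. The sign pattern is exactly $(\ref{dec3})$ read multiplicatively, and the point of the construction is that this is the unique choice making the three cusp conditions collapse.

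The verification is then a matter of checking the three hypotheses of Theorem I-\ref{ThUMF} one by one. \emph{Weight:} $\tfrac12\sum_{m\mid N} a_m = \tfrac12\big(-1+\sum p_i -\sum p_ip_j + p_1p_2p_3\big) = \tfrac12(p_1-1)(p_2-1)(p_3-1)$ by $(\ref{dec3})$; this is a positive integer since at least one $p_i$ is odd, in fact each factor $p_i-1$ is even. \emph{Square condition $(\ref{UMF1})$:} compute $\prod_{m\mid N} (N/m)^{a_m}$; grouping the contributions of each prime $p_i$ one gets an even exponent for every $p_i$ (the exponent of $p_1$, say, comes out to $-p_2p_3+p_2+p_3-1 = -(p_2-1)(p_3-1)$, which is even), so the product lies in $\Q^2$. \emph{Valuation / order at $\infty$, i.e.\ $(\ref{UMF2})$:} $\tfrac{1}{24}\sum_{m\mid N} m a_m = \tfrac{1}{24}\big(-1+\sum p_i^2 - \sum p_i^2p_j^2 + (p_1p_2p_3)^2\big) = \tfrac{(p_1^2-1)(p_2^2-1)(p_3^2-1)}{24}$, a positive integer since $24\mid (p^2-1)(q^2-1)$ already for two distinct primes (each $p_i^2-1$ is divisible by $24$ when $p_i\ge5$, by $8$ when $p_i\in\{2,3\}$ — up to rearrangement of which small primes occur, the product is divisible by $24$). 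This also records $\nu(\Delta_N)=p_1^{r_1-1}p_2^{r_2-1}p_3^{r_3-1}\tfrac{(p_1^2-1)(p_2^2-1)(p_3^2-1)}{24}$ after applying the dilation.

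The cusp condition $(\ref{UMF3})$ is the step I expect to carry the real content. I must show $\sum_{m\mid N}\tfrac{\gcd(c,m)^2}{m}a_m = 0$ for every $c\in\llbracket1,N-1\rrbracket$. As remarked after Theorem I-\ref{ThUMF0}, $\gcd(c,m)$ depends only on $\gcd(c,N)$, so it suffices to treat the divisors $c\mid N$ with $c<N$: namely $c=1$, $c=p_i$, $c=p_ip_j$. For $c=1$ this is $\sum_{m\mid N} a_m/m = 0$, which I verify directly by substituting the $a_m$'s and using $(\ref{dec3})$ once more (the left side factors as $-\prod_i(1-1/p_i)\cdot$(nonzero) — more precisely it telescopes to $\tfrac{-1}{p_1p_2p_3}(p_1-1)(p_2-1)(p_3-1)\cdots$, which is $0$ only after the cancellation forced by the chosen signs, so this is really the identity being exploited). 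For $c=p_1$ the function $m\mapsto\gcd(p_1,m)^2/m$ takes value $1/m$ on divisors $m$ prime to $p_1$ and $p_1^2/m$ on divisors divisible by $p_1$; splitting the sum accordingly and factoring out the $p_1$-part reduces it to a two-prime identity of the type already proved in Theorem I-\ref{Thp1p2}, hence $0$. The cases $c=p_ip_j$ are analogous, reducing to the one-prime identity. The main obstacle is purely bookkeeping: organizing these eight-term sums so that the cancellations are visible, and confirming that the sign choice dictated by the expansion $(\ref{dec3})$ is exactly what makes all of $c=1,p_i,p_ip_j$ vanish simultaneously — which is the inductive pattern the paper signals will generalize. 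Once $(\ref{UMF3})$ is checked, Theorem I-\ref{ThUMF} gives that $\Delta_{p_1p_2p_3}$ is a strong modular unit, and Corollary I-\ref{Dilat2} promotes it to $\Delta_N$, completing the proof.
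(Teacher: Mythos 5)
Your proposal is correct and follows essentially the same route as the paper: the paper gives no separate proof for three prime factors but subsumes it in the general case $n\ge 3$ (Theorem I-\ref{Thp1p2p3p4}), whose proof is exactly your verification of the three hypotheses of Theorem I-\ref{ThUMF} (weight, square condition, valuation, and the cusp sums organized by $\gcd(c,N)$) followed by Corollary I-\ref{Dilat2}. Two asides in your write-up are garbled but harmless: $\sum_{m\mid N} a_m/m=\sum_m \alpha_m$ factors as $-\prod_i(1-1)=0$, not as $-\prod_i(1-1/p_i)$ (which is nonzero); and when $2\in\{p_1,p_2,p_3\}$ the factor $2-1$ is odd, the evenness of $\frac12(p_1-1)(p_2-1)(p_3-1)$ coming instead from the fact that at least two of the three distinct primes are odd.
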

\ms

For example, when $N = 60 = 2^2.3.5$ :

\begin{equation}
\Delta_{60}(\tau) = q^{48}\prod_{n=1}^{+\infty} \frac{(1-q^{4n})^2 (1-q^{6n})^3 (1-q^{10n})^5}{(1-q^{2n}) (1-q^{12n})^6 (1-q^{20n})^{10} (1-q^{30n})^{15} (1-q^{60n})^{30}}\\
\end{equation}
which is in $M_{4}(\Gamma_0(60))$.
\ms

As can be seen from Theorems I-\ref{Thp1} and I-\ref{Thp1p2}, the exceptional $p=2$ and $p=3$ cases gradually faded to give a unified result on Theorem I-\ref{Thp1p2p3} that we now establish in the general case of a number of prime factors greater than or equal to $3$.
\bs
\ms

\item[$\bullet$] \textbf{The general case}
\ms

Given corollary I-\ref{Dilat2}, the search for a strong modular unit at level $N = p_1^{r_1}\ldots p_n^{r_n}$, $n\se 3$ is reduced to that of a strong modular unit at level $N = p_1\ldots p_n$.
\ms

For $N = p_1\ldots p_n$, $n\se 3$ and the prime numbers $p_i$ all distinct, we consider equality which is finally directly related to what will be the weight of $\Delta_N$ :
\begin{equation}
\begin{array}{lcl}
(p_1-1)(p_2-1)\ldots (p_n-1) & = & + \ds \prod_{1\ie i \ie n} p_i\\
 &   & - \ds \sum_{1\ie k_1 \ie n} \pa{\prod_{1\ie i \ie n, \ i \not=k_1} p_i}\\
 &   & \ds + \sum_{1\ie k_1 < k_2 \ie n} \pa{\prod_{1\ie i \ie n, \ i \not\in\{k_1,k_2\}} p_i}\\
 &   & \vdots\\
 &   & \ds +(-1)^{n-1} \sum_{1\ie i \ie n} p_i\\
 &   & \ds +(-1)^{n}.
\end{array}
\label{decN}
\end{equation}
\ms

The $m$ divisors of $p_1\ldots p_n$ preceded by a + sign will provide a $\eta_m$ term to the numerator, those preceded by a - sign will provide a $\eta_m$ term to the denominator. Let's formalize that.
\ms

The divisors of $p_1\ldots p_n$ are exactly the elements of
\begin{equation}
\Omega = \{p_1^{\eps_1}\ldots p_n^{\eps_n}, \ (\eps_1,\ldots,\eps_n)\in \{0,1\}^n\}.
\end{equation}

For $m = p_1^{\eps_1}\ldots p_n^{\eps_n} \in \Omega$, note $\ds \alpha_m = \alpha(\eps) = (-1)^{n - \sum_{i=1}^n \eps_i}$, the final result is then the following.
\ms

\begin{Thm}\label{Thp1p2p3p4}
Let be $p_1,$\ldots, $p_n$ prime numbers all distinct with $n\se 3$. So
\begin{equation}
\begin{array}{lcl}
\Delta_{p_1\ldots p_n}(\tau) & = & \ds \prod_{m\in \Omega} \eta_m(\tau)^{\alpha_m} \vspace{1mm}\\
& = & \ds \prod_{\eps\in \{0,1\}^n} \eta(p_1^{\eps_1}\ldots p_n^{\eps_n}\tau)^{\alpha(\eps)p_1^{\eps_1}\ldots p_n^{\eps_n}}
\end{array}
\end{equation}

is a strong modular unit with respect to $\Gamma_0(p_1\ldots p_n)$ with $\ds \Delta_{p_1\ldots p_n} \in M_{\frac{1}{2} (p_1-1)\ldots(p_n-1)} (\Gamma_0(p_1\ldots p_n))$.
\ms

In the general case where $N = p_1^{r_1}\ldots p_n^{r_n}$, $n\se 3$, 
\begin{equation}
\Delta_N(\tau) = \Delta_{p_1\ldots p_n}\pa{\frac{N\tau}{p_1\ldots p_n}}
\end{equation}
is a strong modular unit with respect to $\Gamma_0(N)$ with $\ds \Delta_{N} \in M_{\frac{1}{2} (p_1-1)\ldots(p_n-1)}(\Gamma_0(N))$.
\end{Thm}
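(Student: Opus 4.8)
The plan is to verify the three hypotheses of Theorem I-\ref{ThUMF} for the $\eta$-quotient $\Delta_{p_1\ldots p_n}(\tau) = \prod_{m\in\Omega}\eta(m\tau)^{\alpha_m m}$, where for $m=p_1^{\eps_1}\ldots p_n^{\eps_n}$ one has $a_m = \alpha_m\, m = (-1)^{n-\sum\eps_i}\,m$; the extension to general $N=p_1^{r_1}\ldots p_n^{r_n}$ is then immediate from Corollary I-\ref{Dilat2}, since $\frac{N}{p_1\ldots p_n}=p_1^{r_1-1}\ldots p_n^{r_n-1}$. Throughout, the key computational device is that a sum over $\Omega$ of a product of factors indexed by the primes factorizes: for any functions $f_i$ defined on $\{0,1\}$, $\sum_{\eps\in\{0,1\}^n}\prod_{i=1}^n f_i(\eps_i) = \prod_{i=1}^n\bigl(f_i(0)+f_i(1)\bigr)$. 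This single identity handles all three conditions.

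First, the weight. One has $2k=\tfrac12\sum_{m\in\Omega}a_m = \tfrac12\sum_{\eps}(-1)^{n-\sum\eps_i}p_1^{\eps_1}\ldots p_n^{\eps_n} = \tfrac12\prod_{i=1}^n(p_i-1)$, by the factorization identity applied to $f_i(\eps_i)=(-1)^{1-\eps_i}p_i^{\eps_i}$, whose partial sum is $p_i-1$; this is exactly the expansion $(\ref{decN})$ read backwards. Next, condition $(ii)$: $\tfrac{1}{24}\sum_m m a_m = \tfrac{1}{24}\sum_\eps(-1)^{n-\sum\eps_i}(p_1^{\eps_1}\ldots p_n^{\eps_n})^2 = \tfrac{1}{24}\prod_{i=1}^n(p_i^2-1)$ by the same trick with $f_i(\eps_i)=(-1)^{1-\eps_i}p_i^{2\eps_i}$. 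Each factor $p_i^2-1$ is divisible by $24$ when $p_i\ge5$, and for $p_i\in\{2,3\}$ one checks that a product of at least three such factors (as forced by $n\ge3$) is still divisible by $24$ — concretely, among $p_1<p_2<p_3$ at most one prime is $2$ and at most one is $3$, and $p^2-1$ contributes a factor $3$ for every $p\neq3$ and a factor $8$ for every odd $p$, so three factors always supply $2^3\cdot3$; this integer is moreover strictly positive, giving $\nu(\Delta_{p_1\ldots p_n})=\tfrac{1}{24}\prod(p_i^2-1)\in\N^*$.

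For condition $(i)$, $\prod_{m\mid N}m'^{a_m}$ with $m'=\tfrac{N}{m}$: writing $m'=p_1^{1-\eps_1}\ldots p_n^{1-\eps_n}$ and taking $p_j$-adic valuations, the exponent of $p_j$ in $\prod_m m'^{a_m}$ is $\sum_\eps(1-\eps_j)(-1)^{n-\sum\eps_i}p_1^{\eps_1}\ldots p_n^{\eps_n}$; fixing $\eps_j$ and factoring over the remaining indices shows the $\eps_j=1$ term vanishes and the $\eps_j=0$ term equals $(-1)\prod_{i\neq j}(p_i-1)$, an even number since $n-1\ge2$ and each $p_i-1$ is even — hence the whole product is a rational square. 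Finally, condition $(iii)$: for each $c\in\llbracket1,N-1\rrbracket$ we must show $\sum_{m\mid N}\frac{\gcd(c,m)^2}{m}a_m=0$. Since $\gcd(c,m)=\gcd(\gcd(c,N),m)$ it suffices to treat $c$ dividing $N$, so $\gcd(c,p_i)=p_i^{\delta_i}$ with $\delta_i\in\{0,1\}$, not all $\delta_i=1$; then $\frac{\gcd(c,m)^2}{m}a_m$ factors over $i$ as $\prod_i (-1)^{1-\eps_i}p_i^{(2\delta_i-1)\eps_i}$, and the factorization identity gives $\prod_i\bigl(p_i^{2\delta_i-1}-1\bigr)$, which vanishes because some $\delta_i=0$ makes that factor $p_i^{-1}-1\ne0$... more carefully, the $i$-th partial sum is $p_i^{2\delta_i-1}-1$, which is $0$ precisely when $\delta_i=1$ (the case $c$ divisible by $p_i^2$ is impossible as $p_i^2\nmid N$); since not all $\delta_i=1$, at least one factor is nonzero but — wait — we need the \emph{product} to be zero. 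The correct bookkeeping: for $\delta_i=1$ the partial sum over $\eps_i$ is $p_i - 1\neq 0$, while for $\delta_i=0$ it is $p_i^{-1}-1$; thus the product is $\prod_{\delta_i=1}(p_i-1)\prod_{\delta_i=0}(p_i^{-1}-1)$. This is generally nonzero, so a finer analysis is needed: one instead groups $m$ by whether $c/\gcd(c,N)$... The main obstacle is exactly this condition $(iii)$ computation — the clean statement is that for $1\le c\le N-1$ with $d:=\gcd(c,N)$ a proper divisor, writing $m=m_1 m_2$ with $m_1\mid d$, $\gcd(m_2,d)=1$, the factor $\frac{\gcd(c,m)^2}{m}=\frac{m_1}{m_2}$ splits multiplicatively and the sum factors as $\Bigl(\sum_{m_1\mid d}m_1\alpha_{m_1}\Bigr)\Bigl(\sum_{m_2\mid N/d,\ \gcd(m_2,d)=1}\tfrac{1}{m_2}\alpha_{m_2}\Bigr)$; the second factor runs over a nonempty set of primes (those $p_i$ with $\delta_i=0$) and equals $\prod_{\delta_i=0}(p_i^{-1}-1)$, \emph{but} sign bookkeeping of $\alpha$ must be re-indexed so that one of these partial sums is $\prod(1-1)=0$ — concretely $\sum_{m_2}\tfrac{1}{m_2}\alpha_{m_2}=\prod_{\delta_i=0}\bigl(\tfrac{\pm1}{p_i}\mp1\bigr)$ and one verifies the correct signs make a factor $1 + (-1) = 0$. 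I would pin down the sign convention carefully at the outset so that this telescoping to $0$ is transparent; once that is done, conditions $(i)$ and $(ii)$ are routine factorizations and Theorem I-\ref{ThUMF} applies, and Corollary I-\ref{Dilat2} lifts the result to $N=p_1^{r_1}\ldots p_n^{r_n}$ while preserving the weight $\tfrac12\prod(p_i-1)$.
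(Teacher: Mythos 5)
Your overall strategy --- verify the three hypotheses of Theorem I-\ref{ThUMF} for the $\eta$-quotient with $a_m=\alpha_m m$ via the factorization identity $\sum_{\eps\in\{0,1\}^n}\prod_i f_i(\eps_i)=\prod_i\bigl(f_i(0)+f_i(1)\bigr)$, then lift to $N=p_1^{r_1}\ldots p_n^{r_n}$ by Corollary I-\ref{Dilat2} --- is exactly the paper's, and your treatment of the weight and of conditions $(i)$ and $(ii)$ is essentially correct. (One small imprecision in $(i)$: not ``each $p_i-1$ is even'', since $p_i-1=1$ when $p_i=2$; the exponent $-\prod_{i\neq j}(p_i-1)$ is even because it has $n-1\se 2$ factors of which at most one is odd, which is the point the paper makes explicitly.)

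The genuine gap is in condition $(iii)$, which is the heart of the proof, and you acknowledge as much without repairing it. You compute $\frac{\gcd(c,m)^2}{m}a_m$ as $\prod_i(-1)^{1-\eps_i}p_i^{(2\delta_i-1)\eps_i}$, arrive at the nonvanishing product $\prod_{\delta_i=1}(p_i-1)\prod_{\delta_i=0}(p_i^{-1}-1)$, correctly sense that something is off, and then leave the fix as an unverified assertion (``one verifies the correct signs make a factor $1+(-1)=0$''). The error is a mis-tracking of exponents: since $a_m=\alpha_m m$, the $m$ in the denominator cancels and $\frac{\gcd(c,m)^2}{m}\,a_m=\gcd(c,m)^2\,\alpha_m$, whose $i$-th factor is $(-1)^{1-\eps_i}p_i^{2\delta_i\eps_i}$ --- exponent $2\delta_i\eps_i$, not $(2\delta_i-1)\eps_i$. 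The partial sum over $\eps_i$ is then $p_i^{2\delta_i}-1$, which equals $p_i^2-1$ when $p_i\mid c$ and equals $0$ when $p_i\nmid c$; since $\gcd(c,p_1\ldots p_n)$ is a proper divisor, some $\delta_i=0$ and the whole product vanishes. This is precisely the paper's argument, phrased there as pairing the terms $\eps_n=0$ and $\eps_n=1$ for a prime $p_n\nmid c$. With that one-line correction your proof closes; without it, the key hypothesis of Theorem I-\ref{ThUMF} --- the vanishing of the cusp orders away from infinity, which is what makes $\Delta_N$ a \emph{strong} modular unit --- is not established.
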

\ms

\begin{proof}
With the corollary I-\ref{Dilat2}, it is enough to establish that $\Delta_N$ verifies the assumptions of Theorem I-\ref{ThUMF} when $N=p_1\ldots p_n$, $n\se 3$. For $m=p_1^{\eps_1} \ldots p_n^{\eps_n}\in \Omega$, we have $a_m = \alpha_m m$.
\ms

For condition $(i)$,
\begin{equation*}
\prod_{m\mid p_1\ldots p_n} m'^{a_m} = \prod_{m\mid p_1\ldots p_n} m^{a_{m'}} =  \prod_{\eps\in \{0,1\}^n} [p_1^{\eps_1} \ldots p_n^{\eps_n}]^{(-1)^{(\sum_{i=1}^n \eps_i)}p_1^{1-\eps_1} \ldots p_n^{1-\eps_n}} = \prod_{i=1}^n p_i^{\delta_i}.
\end{equation*}

It is about showing that the $\delta_i$ are even integers. By symmetry, let's show this for $i=n$.

We consider the $2^{n-1}$ divisors of $p_1\ldots p_n$ which contain $p_n$, that is $m =p_1^{\eps_1} \ldots p_{n-1}^{\eps_{n-1}} p_n$.

We find
\begin{equation*}
\begin{array}{lcl}
\delta_n & = & \ds \sum_{\eps\in \{0,1\}^{n-1}} (-1)^{(1+\sum_{i=1}^{n-1} \eps_i)} p_1^{1-\eps_1} \ldots p_{n-1}^{1-\eps_{n-1}}\\
& = & \ds (-1)^n \sum_{\eps\in \{0,1\}^{n-1}} (-1)^{\sum_{i=1}^{n-1} \eps_i} p_1^{\eps_1} \ldots p_{n-1}^{\eps_{n-1}}\\
& = & \ds (-1)^n \prod_{j=1}^{n-1} (1-p_j).
\end{array}
\end{equation*}

Generally speaking, $\delta_i = \ds -\prod_{1\ie j\ie n, j\not= i} (p_j-1)$ which is even, even in the case where $2$ is one of the primary factors since $n\se 3$.
\ms

For condition $(ii)$,
\begin{equation*}
\begin{array}{lcl}
\ds \frac{1}{24}\sum_{m\mid p_1\ldots p_n} m a_m  & = & \ds \frac{1}{24} \sum_{\eps\in \{0,1\}^n} (-1)^{n-\sum_{i=1}^n \eps_i} p_1^{2\eps_1} \ldots p_n^{2\eps_n} \\
& = & \ds \frac{(-1)^n}{24} \sum_{\eps\in \{0,1\}^n} (-p_1^{2})^{\eps_1} \ldots (-p_n^{2})^{\eps_n}\\
& = & \ds \frac{1}{24} \prod_{i=1}^n (p_i^2-1)
\end{array}
\end{equation*}
which is a positive integer because one of the factors, let's say $p$, is greater than or equal to $5$, which results in $ \frac{1}{24} (p^2-1)\in \N^*$ and the result.\ms

For condition $(iii)$ two cases are distinguished when $c\in \llbracket1,p_1\ldots p_n - 1\rrbracket$.
\ms

$\bullet$ If $gcd(c,p_1\ldots p_n) = 1$.

\begin{equation*}
\begin{array}{lcl}
\ds \sum_{m\mid p_1\ldots p_n} \frac{gcd(c,m)^2}{m} a_m  & = & \ds \sum_{\eps\in \{0,1\}^n} (-1)^{n-\sum_{i=1}^n \eps_i} \\
& = & \ds (-1)^n \prod_{i=1}^n (1-1) = 0.
\end{array}
\end{equation*}
\ms

$\bullet$ If $gcd(c,p_1\ldots p_n) > 1$.

If we reindex the sequence $p_1,\ldots, p_n$, we write $c = p_1\ldots p_r c'$ with $0 < r < n$ and $gcd(c',p_1\ldots p_n) = 1$.

\begin{equation*}
\begin{array}{lcl}
\ds \sum_{m\mid p_1\ldots p_n} \frac{gcd(c,m)^2}{m} a_m  & = & \ds \sum_{\eps\in \{0,1\}^n} gcd(p_1\ldots p_r,p_1^{\eps_1}\ldots p_n^{\eps_n})^2 \  \alpha(p_1^{\eps_1}\ldots p_n^{\eps_n}) \\
& = & \sum_{\eps\in \{0,1\}^n} (-1)^{n-\sum_{i=1}^n \eps_i} p_1^{2\eps_1}\ldots p_r^{2\eps_r} \\
& = & 0
\end{array}
\end{equation*}

because if you group the terms $\eps_n = $0 and $\eps_n = $1, you get two opposite terms.
\end{proof}
\ms

The simplest example with four prime factors is $N = 210 = 2.3.5.7$:
\begin{equation}
\begin{array}{lcl}
\Delta_{210}(\tau) & = & \ds q^{1152}\prod_{n=1}^{+\infty} \frac{(1-q^n) (1-q^{6n})^6 (1-q^{10n})^{10} (1-q^{14*n})^{14}} {(1-q^{2n})^2 (1-q^{3n})^3 (1-q^{5n})^5 (1-q^{7n})^7}\\
 &   & {\hskip 3cm} \ds \frac{(1-q^{15n})^{15} (1-q^{21n})^{21} (1-q^{35n})^{35} (1-q^{210n})^{210}} {(1-q^{30n})^{30} (1-q^{42n})^{42} (1-q^{70n})^{70} (1-q^{105n})^{105}}
\end{array}
\end{equation}
which is the minimum strong modular unit at level $210$. It is in modular space $M_{24}(\Gamma_0(210))$.
\ei
\bs

\begin{Not}
We will write down $\rho_N$ the weight of $\Delta_N$.
\end{Not}

Let $p$, $p_1$\ldots \ be distinct prime numbers and $r$, $r_1$\ldots \ positive integers.

The following table summarizes the characteristics of $\Delta_N$.
\ms

\begin{equation*}
\begin{array}{|l|l|l|l|l|}
\hline
N & \rho_N & \nu(\Delta_N) & \Delta_N(\tau) \\
\hline

2  & \ds 4 & \ds 1 & \ds \eta(\tau)^{-8} \eta(2\tau)^{16} \\
\hline

4 & \ds 2 & \ds 1 & \ds \eta(2\tau)^{-4} \eta(4\tau)^{8} \\
\hline

2^r, \ r\se 2 & \ds 2 & \ds 2^{r-2} & \ds \Delta_4(2^{r-2}\tau) \\
\hline

3 & \ds 6 & \ds 2 & \ds \eta(\tau)^{-6} \eta(3\tau)^{18} \\ 
\hline

9 & \ds 2 & \ds 2 & \ds \eta(3\tau)^{-2} \eta(9\tau)^{6} \\
\hline

3^r, \ r\se 2 & \ds 2 & \ds 2.3^{r-2} & \ds \Delta_9(3^{r-2}\tau) \\
\hline

p \se 5 & \ds p-1 & \frac{1}{12}(p^2 - 1) & \ds \eta(\tau)^{-2} \eta(p\tau)^{2p} \\ 
\hline

p^r, \ r\se 1 & \ds p-1 & \frac{1}{12}p^{r-1} (p^2 - 1)& \ds \Delta_p(p^{r-1}\tau) \\
\hline

2p & \ds p-1 & \frac{1}{4}(p^2 - 1) & \ds \eta(\tau)^{2} \eta(2\tau)^{-4} \eta(p\tau)^{-2p} \eta(2p\tau)^{4p} \\
\hline

2^{r_1}p^{r_2} & \ds p-1 & 2^{r_1-3}p^{r_2-1}(p^2 - 1) & \ds \Delta_{2p}(2^{r_1-1}p^{r_2-1}\tau) \\
\hline

p_1p_2, \ p_1, p_2\se 3 & \frac{1}{2}(p_1-1)(p_2-1) & \frac{1}{24}(p_1^2-1)(p_2^2-1) & \ds \eta(\tau) \eta(p_1\tau)^{-p_1} \eta(p_2\tau)^{-p_2} \eta(p_1p_2\tau)^{p_1p_2} \\
\hline

p_1^{r_1}p_2^{r_2}, \ p_1, p_2\se 3 & \frac{1}{2}(p_1-1)(p_2-1) & \frac{p_1^{r_1-1}p_2^{r_2-1}}{24}(p_1^2-1)(p_2^2-1) & \ds \Delta_{p_1p_2}(p_1^{r_1-1}p_2^{r_2-1}\tau) \\
\hline

p_1\ldots p_n, \ n\se 3 & \frac{1}{2} (p_1-1)\ldots(p_n-1) & \frac{1}{24} (p_1^2-1)\ldots(p_n^2-1)  & \ds \prod_{\eps\in \{0,1\}^n} \eta(p_1^{\eps_1}\ldots p_n^{\eps_n}\tau)^{\alpha(\eps)p_1^{\eps_1}\ldots p_n^{\eps_n}} \\
\hline

p_1^{r_1}\ldots p_n^{r_n}, \ n\se 3 & \frac{1}{2} (p_1-1)\ldots(p_n-1) & \frac{p_1^{r_1-1}\ldots p_n^{r_n-1}}{24} {\scriptstyle (p_1^2-1)\ldots(p_n^2-1)} & \ds \Delta_{p_1\ldots p_n}(p_1^{r_1-1}\ldots p_n^{r_n-1}\tau) \\

\hline

\end{array}
\end{equation*}

With $\eps = (\eps_1,\ldots,\eps_n)\in \{0,1\}^n$ and $\alpha_n(\eps) = (-1)^{n - \sum_{i=1}^n \eps_i}$
\bs
\bs

\section{-- Structure and bases of $\pa{M_{2k}(\Gamma_0(N))}_{k\in \N^*}$, $N$ positive integer}
\ms

Write the explicit formula giving the dimension of $M_{2}(\Gamma_0(N))$ in the general case, we will consult again \cite{Miya}, \cite{Diam} and \cite{Stein}.
\ms

For $p$ prime and $N\in \N^*$, we note $\nu_p(N)$ the power of $p$ in the decomposition of $N$ into prime factors. One then poses

\begin{equation}
\begin{array}{rcl}
\ms

\ds \mu_0(N) & = & \ds \prod_{p\mid N} \pa{p^{\nu_p(N)} + p^{\nu_p(N)-1}},\\
\ms

\ds \mu_{0,2}(N) & = & \ds
\left\{ \begin{array}{ll}
0 & \text{si} \ 4\mid N,\\
\prod_{p\mid N} \pa{1+ \pa{\frac{-4}{p}}} & \text{sinon},
\end{array}\right. \\
\ms

\ds \mu_{0,3}(N) & = & \ds
\left\{ \begin{array}{ll}
0 & \text{si} \ 2\mid N \ \text{ou} \ 9\mid N,\\
\prod_{p\mid N} \pa{1+ \pa{\frac{-3}{p}}} & \text{sinon},
\end{array}\right. \\
\ms

c_0(N) & = & \ds \sum_{d\mid N} \varphi(gcd(d,N/d)),\\
\ms

g_0(N) & = & \ds 1 +\frac{\mu_0(N)}{12} - \frac{\mu_{0,2}(N)}{4} - \frac{\mu_{0,3}(N)}{3} - \frac{c_0(N)}{2}.
\end{array}
\end{equation}
\ms

The $M_{2k}(\Gamma_0(N))$ space is divided into the cuspidal subspace $S_{2k}(\Gamma_0(N))$ and the Eisenstein subspace $E_{2k}(\Gamma_0(N))$:

\begin{equation}
M_{2k}(\Gamma_0(N)) = S_{2k}(\Gamma_0(N)) \oplus E_{2k}(\Gamma_0(N)).
\end{equation}

We have the relationships:
\begin{equation}
\dim(S_{2k}(\Gamma_0(N))) = \ds
\left\{\begin{array}{l}
\vspace{2mm}
g_0(N) \ \ \text{if \ k=1}, \\

(2k-1)(g_0(N)-1) + \pa{k-1} c_0(N) + \mu_{0,2}(N)\lfloor\frac{k}{2}\rfloor + \mu_{0,3}(N)\lfloor\frac{2k}{3}\rfloor \ \ \text{si} \ k\se 2,
\end{array}\right.
\label{DimM20}
\end{equation}


\begin{equation}
\dim(E_{2k}(\Gamma_0(N))) = \left\{
\begin{array}{ll}
\vspace{2mm}
c_0(N) & \text{if} \ k\not = 1\\
c_0(N)-1 & \text{if} \ k=1.
\end{array}
\right.
\end{equation}

Hence finally
\begin{equation}
\forall k\in\N^*, \ \ \dim(M_{2k}(\Gamma_0(N))) = \ds (2k-1)(g_0(N)-1) + kc_0(N) + \mu_{0,2}(N)\lfloor\frac{k}{2}\rfloor + \mu_{0,3}(N)\lfloor\frac{2k}{3}\rfloor
\label{DimM1}
\end{equation}

\begin{Thm}\label{LemDim2}
Let's say $N \in\N^*$ and $\rho_N$ the weight of $\Delta_N$. For any $k\in \N^*$,
\begin{equation}
\dim(M_{2k+\rho_N}(\Gamma_0(N))) - \dim(M_{2k}(\Gamma_0(N))) = \nu(\Delta_N). \label{Dimg}
\end{equation}
\end{Thm}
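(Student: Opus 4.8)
The plan is to evaluate the left-hand side of $(\ref{Dimg})$ directly from the explicit dimension formula $(\ref{DimM1})$, show it does not depend on $k$, and then identify the constant with $\nu(\Delta_N)$. Set $\ell=\rho_N/2$; since $\Delta_N$ has even weight this is a positive integer, and $(\ref{Dimg})$ is the assertion that $\dim M_{2(k+\ell)}(\Gamma_0(N))-\dim M_{2k}(\Gamma_0(N))=\nu(\Delta_N)$ for every $k\in\N^*$. Subtracting two instances of $(\ref{DimM1})$ and using $2(k+\ell)-1-(2k-1)=2\ell$, the terms linear in $k$ collapse and one obtains
\[
\dim M_{2(k+\ell)}(\Gamma_0(N))-\dim M_{2k}(\Gamma_0(N))=2\ell\pa{g_0(N)-1}+\ell\,c_0(N)+\mu_{0,2}(N)\,\delta_2(k)+\mu_{0,3}(N)\,\delta_3(k),
\]
where only $\delta_2(k)=\lfloor\tfrac{k+\ell}{2}\rfloor-\lfloor\tfrac{k}{2}\rfloor$ and $\delta_3(k)=\lfloor\tfrac{2(k+\ell)}{3}\rfloor-\lfloor\tfrac{2k}{3}\rfloor$ can carry $k$-dependence. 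The facts I would prove are the two divisibility statements $\mu_{0,2}(N)\neq 0\Rightarrow 4\mid\rho_N$ and $\mu_{0,3}(N)\neq 0\Rightarrow 6\mid\rho_N$. Granting these, $\ell$ is even whenever $\mu_{0,2}(N)\neq 0$, so $\delta_2(k)=\ell/2$ identically; and $3\mid\ell$ whenever $\mu_{0,3}(N)\neq 0$, so $\delta_3(k)=2\ell/3$ identically. In all cases the two products equal $\tfrac{\ell}{2}\mu_{0,2}(N)$ and $\tfrac{2\ell}{3}\mu_{0,3}(N)$, so the difference is independent of $k$.

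The next step is a short simplification. Substituting $g_0(N)-1=\tfrac{\mu_0(N)}{12}-\tfrac{\mu_{0,2}(N)}{4}-\tfrac{\mu_{0,3}(N)}{3}-\tfrac{c_0(N)}{2}$ into the previous display, the contributions of $c_0(N)$, $\mu_{0,2}(N)$ and $\mu_{0,3}(N)$ cancel exactly and there remains
\[
\dim M_{2(k+\ell)}(\Gamma_0(N))-\dim M_{2k}(\Gamma_0(N))=\frac{2\ell\,\mu_0(N)}{12}=\frac{\rho_N\,\mu_0(N)}{12}.
\]
So I am reduced to the identity $\nu(\Delta_N)=\rho_N\,\mu_0(N)/12$. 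Since $\mu_0(N)=\prod_{p\mid N}\pa{p^{\nu_p(N)}+p^{\nu_p(N)-1}}$ is exactly the index $[SL_2(\Z):\Gamma_0(N)]$, and $\Delta_N$ is a weight-$\rho_N$ modular form that does not vanish on ${\cal H}$ and whose only zero is a zero of order $\nu(\Delta_N)$ at the cusp $i\infty$ (of width $1$), this is precisely the valence formula $\sum \mathrm{ord}_P(\Delta_N)=\tfrac{1}{12}\rho_N\,[SL_2(\Z):\Gamma_0(N)]$; alternatively it is a one-line verification on each row of the table of section 6 (for example $\tfrac{(p-1)(p+1)}{12}=\tfrac{p^2-1}{12}=\nu(\Delta_p)$ when $p\se 5$ is prime, and similarly for the composite levels).

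The hard part is the two divisibility statements, and this is where the classification of $\Delta_N$ from Theorems I-\ref{Thp1}, I-\ref{Thp1p2}, I-\ref{Thp1p2p3} and I-\ref{Thp1p2p3p4} must be used. First I would record the arithmetic of the coefficients: $\mu_{0,2}(N)\neq 0$ forces $4\nmid N$ and every odd prime divisor of $N$ to be $\equiv 1\ ({\rm mod}\ 4)$, while $\mu_{0,3}(N)\neq 0$ forces $N$ odd, $9\nmid N$ and every prime divisor of $N$ to be $3$ or $\equiv 1\ ({\rm mod}\ 3)$. Then I would run through the table of weights $\rho_N$: for $N=p^r$ with $p\se 5$, $\rho_N=p-1$, which is divisible by $4$ as soon as $p\equiv 1\ ({\rm mod}\ 4)$ and by $6$ as soon as $p\equiv 1\ ({\rm mod}\ 3)$ (being even anyway); the exceptional small levels $N\in\{2,2^r,3,3^r,9\}$ are checked directly (in each, either $\mu_{0,2}(N)=\mu_{0,3}(N)=0$, or $N=2$ with $\rho_2=4$, or $N=3$ with $\rho_3=6$); and for a level with $n\se 2$ distinct prime factors one has $\rho_N=\tfrac12\prod_{p\mid N}(p-1)$, except for the levels $N=2^{a}p^{b}$, where $\rho_N=p-1$. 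In that special shape $\mu_{0,3}(N)=0$, and $\mu_{0,2}(N)\neq 0$ forces $p\equiv 1\ ({\rm mod}\ 4)$, i.e. $4\mid p-1=\rho_N$; in all other composite cases, when $\mu_{0,2}(N)\neq0$ at least two of the factors $p-1$ are divisible by $4$, so $16\mid\prod_{p\mid N}(p-1)$ and hence $4\mid\rho_N$, while when $\mu_{0,3}(N)\neq0$ the level is odd and all $n$ factors $p-1$ are even with at least $n-1$ of them divisible by $3$, so $2^{n-1}3^{n-1}\mid\rho_N$ and hence $6\mid\rho_N$. This case analysis is routine but is the only genuinely non-formal ingredient, and I expect it — rather than the dimension bookkeeping — to be the main obstacle.
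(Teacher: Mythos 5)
Your argument is correct, but it is precisely the route the paper announces and then deliberately avoids: just before Lemma I-\ref{LemVal2} the author notes that one could ``use directly the equation $(\ref{DimM1})$, but that requires to study several cases according to the divisibility of $\rho_N$ by $3$ and $4$'' -- and your two divisibility lemmas ($\mu_{0,2}(N)\neq 0\Rightarrow 4\mid\rho_N$ and $\mu_{0,3}(N)\neq 0\Rightarrow 6\mid\rho_N$) are exactly that study. I checked them against the classification of $\Delta_N$ in section 6, including the exceptional shapes $N=2$, $N=3$ and $N=2^{r_1}p^{r_2}$, and they hold, so your computation goes through and yields the clean intermediate identity $\dim M_{2k+\rho_N}(\Gamma_0(N))-\dim M_{2k}(\Gamma_0(N))=\rho_N\,\mu_0(N)/12$. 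The paper instead proves Lemma I-\ref{LemVal2} (existence of valuation-$1$ forms via $E_4-H_{2,N}^2$), deduces that for $k$ large enough the first $\nu(\Delta_N)$ valuations of a unitary upper triangular basis are $0,1,\ldots,\nu(\Delta_N)-1$ with no jumps, concludes from Theorem I-\ref{ThmStruct1} that the dimension difference equals $\nu(\Delta_N)$ for all $k\se k_0$, and only then invokes $(\ref{DimM1})$ for the soft observation that $k\mapsto\dim M_{2k+\rho_N}(\Gamma_0(N))-\dim M_{2k}(\Gamma_0(N))$ is $6$-periodic, hence constant everywhere. The trade-off: the paper's proof needs no arithmetic of $\rho_N$ and identifies the constant as $\nu(\Delta_N)$ intrinsically through the structure theorem, but leans on the eventual-constancy-plus-periodicity trick; yours is a single closed computation, at the price of the case-check on $\rho_N$ and of one input absent from the paper, namely the valence formula (or, staying within the paper's toolkit, a row-by-row verification of $\nu(\Delta_N)=\rho_N\,\mu_0(N)/12$ against the table of section 6) to identify the constant. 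Both are complete proofs.
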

\ms

This result generalizes, as announced, lemma I-\ref{LemDim1}.

When $N=1$, the weight of $\Delta_1 = \Delta$ is $12$ and its valuation $1$, $(\ref{Dimg})$ is classically verified.
\ms

To establish the result, we can use directly the equation $(\ref{DimM1})$, but that requires to study several cases according to the divisibility of $\rho_N$ by $3$ and $4$. To avoid this, let us establish the following result, analogous to corollary I-\ref{CorVal1}.

\begin{Lem}\label{LemVal2}
Let $N$ be an integer greater than or equal to $2$.
If $(E_{2k,N}^{(r)})_{0\ie r \ie d_{2k}(N)-1}$ is an upper triangular basis of $M_{2k}(\Gamma_0(N))$, then $\nu(E_{2k,N}^{(1)}) = $1 for every $k\se 2$.
\end{Lem}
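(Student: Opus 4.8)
The statement claims that in an upper triangular basis of $M_{2k}(\Gamma_0(N))$ with $N \geq 2$ and $k \geq 2$, the second basis vector has valuation exactly $1$. By the lemma on upper triangular bases, the sequence of valuations $(\nu(E_{2k,N}^{(r)}))_r$ is independent of the basis, so it suffices to exhibit, for each $k \geq 2$, one modular form in $M_{2k}(\Gamma_0(N))$ of valuation exactly $1$ together with the fact that $M_{2k}(\Gamma_0(N))$ contains no form of valuation $0$ that is not already forced (which it does, by Corollary I-\ref{CorVal1}), and crucially that there is \emph{no} form of valuation $0$ and no form of valuation $1$ simultaneously missing — i.e. that the jump from $\nu(E^{(0)})=0$ to $\nu(E^{(1)})$ is exactly by one. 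So the real content is: (a) $M_{2k}(\Gamma_0(N))$ has an element of valuation $1$ for every $k \geq 2$, and (b) it has an element of valuation $0$ (already known). Combined with $\nu(E_{2k,N}^{(0)})=0$ from Corollary I-\ref{CorVal1}, the monotonicity $\nu(E^{(0)}) < \nu(E^{(1)})$ forces $\nu(E^{(1)}) = 1$ provided $d_{2k}(N) \geq 2$, which holds for $k \geq 2$ and $N \geq 2$ by the dimension formula (\ref{DimM1}).

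\textbf{Producing a valuation-$1$ form.} First I would reduce to small weight: if $\Psi \in M_{2}(\Gamma_0(N))$ has $\nu(\Psi)=0$ (Theorem I-\ref{ThmVal1}) and $\Phi \in M_{2}(\Gamma_0(N))$ has $\nu(\Phi) = 1$, then $\Phi \cdot \Psi^{k-1} \in M_{2k}(\Gamma_0(N))$ has valuation $1$, handling all $k \geq 2$ at once. So everything comes down to finding $\Phi \in M_2(\Gamma_0(N))$ with valuation exactly $1$, i.e. showing $\dim M_2(\Gamma_0(N)) \geq 2$ \emph{and} that the valuation sequence there starts $0, 1$. For the dimension: from (\ref{DimM1}) with $k=1$, $\dim M_2(\Gamma_0(N)) = c_0(N) + \mu_{0,2}(N)\lfloor \tfrac12\rfloor + \mu_{0,3}(N)\lfloor\tfrac23\rfloor = c_0(N)$ combined with $g_0(N)-1$; one checks $c_0(N) \geq 2$ for $N \geq 2$ (already $c_0(N) = \sum_{d\mid N}\varphi(\gcd(d,N/d)) \geq 2$ since $d=1$ and $d=N$ each contribute $1$), so $\dim M_2(\Gamma_0(N)) \geq 2$. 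Then I would invoke the concrete Eisenstein-type construction: the forms $G_{2,N}(\tau) = G_2(\tau) - NG_2(N\tau)$ from the proof of Theorem I-\ref{ThmVal1}, or more flexibly the family $G_2(\tau) - dG_2(d\tau)$ for proper divisors $d \mid N$, span a space whose valuation sequence is $0,1,\dots$; alternatively, one uses that $M_2(\Gamma_0(N))$ always contains a form whose $q$-expansion is $0 + q + O(q^2)$ after subtracting the valuation-$0$ form. The cleanest route: take $\Psi$ with $\nu(\Psi)=0$ and any $\Phi_1 \in M_2(\Gamma_0(N))$ independent of $\Psi$; then $\Phi_1 - \Phi_1(i\infty)\Psi \in M_2(\Gamma_0(N))$ has positive valuation, and I must argue this valuation is exactly $1$, not $\geq 2$.

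\textbf{The main obstacle.} The delicate point is ruling out a ``gap'': a priori the nonzero form of smallest positive valuation in $M_2(\Gamma_0(N))$ could have valuation $2$ or more. I would dispatch this by a dimension/valuation-counting argument analogous to the one used for Lemma I-\ref{LemDim1} and the theorem preceding it: the valuations $\nu(E_{2,N}^{(r)})$ for $0 \le r \le d_2(N)-1$ are $d_2(N)$ distinct nonnegative integers, and one uses the structure theorem together with the explicit dimension count — comparing $\dim M_2(\Gamma_0(N))$ against $\dim M_{2+\rho_N}(\Gamma_0(N))$ via Theorem I-\ref{LemDim2} is circular here, so instead I would use the $q$-expansion principle directly: a weight-$2$ form on $\Gamma_0(N)$ vanishing to order $\geq 2$ at $i\infty$, after multiplication by a suitable power of $\Psi$ to reach a weight where the valuation filtration is ``saturated'' (the regime $k \geq \tfrac{p+1}{2}$-type bound from the $p$-prime analysis, now in its general-$N$ form), would contradict the count of new valuations. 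Honestly the slickest fix, and the one I expect the author uses, is to quote the explicit $q$-expansions of Eisenstein series attached to cusps of $\Gamma_0(N)$: among the basis of $E_2(\Gamma_0(N))$ (or of $M_2$) there is always one of the shape $q + O(q^2)$, because the Eisenstein series indexed by the cusps realize all ``small'' valuations consecutively. I would therefore structure the final proof as: (1) cite Corollary I-\ref{CorVal1} for $\nu(E^{(0)})=0$; (2) cite/recall the explicit valuation-$0$ and valuation-$1$ Eisenstein forms in $M_2(\Gamma_0(N))$; (3) multiply by $\Psi^{k-1}$ to get valuation $1$ in weight $2k$; (4) conclude by monotonicity of the (basis-independent) valuation sequence that $\nu(E_{2k,N}^{(1)}) = 1$.
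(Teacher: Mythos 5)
There is a genuine gap, and it sits exactly where you flagged it. Your reduction rests on the claim that $\dim M_2(\Gamma_0(N)) \geq 2$ and that $M_2(\Gamma_0(N))$ contains a form of valuation exactly $1$. Both fail already for $N=2$: by the dimension formula, $\dim M_2(\Gamma_0(N)) = g_0(N) - 1 + c_0(N)$, and for $N=2$ one has $g_0(2)=0$, $c_0(2)=2$, so the space is one-dimensional, generated by a single form of valuation $0$ (the paper states this explicitly in Section 4); the same happens for $N=3$. Your estimate $c_0(N)\geq 2$ is correct but you dropped the term $g_0(N)-1$, which equals $-1$ for small $N$. So the step ``find $\Phi \in M_2(\Gamma_0(N))$ of valuation $1$ and multiply by $\Psi^{k-1}$'' cannot be carried out in general, and the fallback you sketch for the ``no gap in valuations'' problem (Eisenstein series attached to cusps realizing consecutive small valuations) is asserted, not proved — and, as you yourself note, the dimension-difference route via Theorem I-\ref{LemDim2} would be circular since that theorem is proved using this lemma.

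The paper sidesteps the whole issue by working in weight $4$ rather than weight $2$. It takes the normalized weight-$2$ form $H_{2,N} = 1 - \frac{24}{1-N}q + O(q^2)$ (from $G_2(\tau) - N G_2(N\tau)$), squares it to get $H_{2,N}^2 = 1 - \frac{48}{1-N}q + O(q^2) \in M_4(\Gamma_0(N))$, and compares with $E_4 = 1 + 240q + O(q^2) \in M_4(\Gamma_0(N))$. The difference $E_4 - H_{2,N}^2 = \bigl(240 + \frac{48}{1-N}\bigr)q + O(q^2)$ has a nonzero $q$-coefficient for every $N \geq 2$ (it lies in $[192,240)$), hence has valuation exactly $1$; multiplying by the appropriate power of the valuation-$0$ weight-$2$ form produces a valuation-$1$ element of $M_{2k}(\Gamma_0(N))$ for every $k \geq 2$, and your concluding step (4) then goes through. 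The key idea your proposal is missing is this use of two explicit, linearly independent valuation-$0$ forms of the \emph{same} weight with distinct $q^1$-coefficients — that is what rules out the valuation gap without any dimension bookkeeping.
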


\begin{proof}
Let us take again some results of the demonstration of Theorem I-\ref{ThmVal1}.

\begin{equation*}
\begin{array}{lcl}
G_2(\tau) & = & \ds \sum_{m\in \Z}\sum_{n\in \Z'_m} \frac{1}{(m\tau+n)^{2}}\\
& = & \ds 2\zeta(2) - 8\pi^2\sum_{n=1}^{+\infty} \sigma(n) q^n = \ds 2\zeta(2) - 8\pi^2 q + O(q^2)
\end{array}
\end{equation*}
where $\Z'_m = \Z-\{0\}$ si $m=0$ and $\Z'_m = \Z$ otherwise.

It is known that $H_{2,N}(\tau) = \frac{1}{2(1-N)\zeta(2)}\pa{G_2(\tau)-NG_2(N\tau)} = 1 - \frac{24}{(1-N)} q + O(q^2)$ belong to $M_2(\Gamma_0(N))$.

It can then be seen that $H_{2,N}(\tau)^2 = 1 - \frac{48}{(1-N)} q + O(q^2)$ belong to $M_4(\Gamma_0(N))$.

The Eisenstein series $E_4(\tau) = 1 + 240q + O(q^2)$ also belongs to $M_4(\Gamma_0(N))$, and consequently $E_4 - H_{2,N}^2 = \pa{240 + \frac{48}{(1-N)}} q + O(q^2)$ also belongs to $M_4(\Gamma_0(N))$ with valuation $1$.
\ms

With the usual notations, $(E_4 - H_{2,N}^2)[E_{2,N}{0}]^{2k-4}$ is an element of $M_{2k}(\Gamma_0(N))$ with valuation $1$ for every $k\se 2$. The result can be deduced from this.
\end{proof}
\ms

\begin{proof}[Demonstration of Theorem I-\ref{LemDim2}]
$ $

We note again $(E_{2k,N}^{(r)})_{0\ie r \ie d_{2k}(N)-1}$ a unitary upper triangular basis of $M_{2k}(\Gamma_0(N))$
\ms

We deduce from Lemma I- \ref{LemVal2} the equalities $\nu(E_{4,N}^{0}) = 0$ and $\nu(E_{4,N}^{1}) = 1$.

As big as you take an integer $a$, you find that for $\ell \in \llbracket 0,a\rrbracket$, the modular form $[E_{2,2}^{(0)}]^{2a-2\ell}[E_{4,N}{1}]^{\ell}$ belongs to $M_{4a}(\Gamma_0(N))$ with valuation $\ell$.

We then choose $a = \nu(\Delta_N)$, and therefore a unitary upper triangular basis of $M_{4a}(\Gamma_0(N))$ does not show any jump in its first $a$ items. This remains true for $M_{2k}(\Gamma_0(N))$, whatever $k\se 2a$ : just multiply the first $a$ of the $M_{4a}(\Gamma_0(N))$ basis by $[E_{2,2}^{(0)}]^{k-2a}$ to see it.
\ms

Finally, using Theorem I-\ref{ThmStruct1},

\begin{equation*}
\forall h\in \N, \ \ M_{4a+2h}(\Gamma_0(N)) = \Phi.M_{2h-\rho_N}(\Gamma_0(N)) \oplus Vect\pa{E_{4a+2h,N}^{(s)} \ / \ \nu(E_{4a+2h,N}^{(s)}) < \nu(\Delta_N)}.
\end{equation*}

Now, we just established that for $h\se 0$,

$\ac{E_{4a+2h,N}^{(s)} \ / \ \nu(E_{4a+2h,N}^{(s)}) < \nu(\Delta_N)} = \ac{E_{4a+2h,N}^{(s)} \ / \ 0 \ie s < \nu(\Delta_N)}$ which is cardinal $\nu(\Delta_N)$. The relationship $(\ref{Dimg})$ is therefore established for all $k\se k_0$, $k_0$ being fixed.
\ms

We then observe, thanks to $(\ref{DimM1})$, that $k\mapsto \dim(M_{2k+\rho_N}(\Gamma_0(N))) - \dim(M_{2k}(\Gamma_0(N))$ is a $6$ period function starting from $k=1$. Since it is constant from $k_0$, it is a constant function for all $k\se 1$, necessarily equal to $\nu(\Delta_N)$, it is the announced result.
\end{proof}
\ms

Moreover, we deduce from Theorem I-\ref{ThmStruct1} the equality
\[\forall k\in \N, \ \ \dim(M_{2k+\rho_N}(\Gamma_0(N))) = \dim(M_{2k}(\Gamma_0(N))) + Card(\{s \ / \ \nu(E_{2k+\rho_N,N}^{(s)}) < \nu(\Delta_N)\}).\]

As a consequence
$\ds Card(\{s \ / \ \nu(E_{2k+\rho_N,N}^{(s)}) < \nu(\Delta_N)\}) = \nu(\Delta_N)$  for $k \se $1 and the next result.

\begin{Thm}
Let be $N\in\N^*$ and, for the whole $k\se 1$, $(E_{2k,N}^{(r)})_{0\ie r \ie d_{2k}(N)-1}$ a unitary upper triangular basis of $M_{2k}(\Gamma_0(N))$. So
\begin{equation}
\forall k \se \frac{\rho_N}{2}+1, \ \ \forall r\in \llbracket0,\nu(\Delta_N) -1\rrbracket, \ \ \nu(E_{2k,N}^{(r)}) = r.
\end{equation}
In addition, you can choose
\begin{equation}
\forall k \se \frac{\rho_N}{2}+1, \ \ \forall r\in \llbracket0,\nu(\Delta_N) -1\rrbracket, \ \ E_{2k,N}^{(r)} = E_{\rho_N+2,N}^{(r)}[E_{2,N}^{(0)}]^{k-\frac{\rho_N}{2}-1}.
\end{equation}
\end{Thm}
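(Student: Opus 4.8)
The plan is to read the first assertion off the cardinality identity $Card\{s \ / \ \nu(E_{2k+\rho_N,N}^{(s)})<\nu(\Delta_N)\}=\nu(\Delta_N)$ (valid for every $k\se 1$, obtained just above the statement from Theorem I-\ref{LemDim2} combined with the structure Theorem I-\ref{ThmStruct1}), and then to exhibit the announced product forms and check that they may legitimately be taken as the first $\nu(\Delta_N)$ vectors of a unitary upper triangular basis.

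First I would fix a weight $2k$ with $k\se \frac{\rho_N}{2}+1$ and put $k'=k-\frac{\rho_N}{2}\se 1$, so that $2k=2k'+\rho_N$ and the cardinality identity applies. Since the basis $(E_{2k,N}^{(s)})_s$ is upper triangular, $s\mapsto\nu(E_{2k,N}^{(s)})$ is strictly increasing, hence $\{s \ / \ \nu(E_{2k,N}^{(s)})<\nu(\Delta_N)\}$ is an initial segment $\llbracket0,j-1\rrbracket$, and the identity forces $j=\nu(\Delta_N)$. The numbers $\nu(E_{2k,N}^{(0)})<\cdots<\nu(E_{2k,N}^{(\nu(\Delta_N)-1)})$ are then $\nu(\Delta_N)$ distinct integers, all $\se 0$ (a modular form is holomorphic at $i\infty$) and all $<\nu(\Delta_N)$; the only such list is $0<1<\cdots<\nu(\Delta_N)-1$, which yields $\nu(E_{2k,N}^{(r)})=r$ for $0\ie r\ie\nu(\Delta_N)-1$.

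For the explicit choice I would apply the previous paragraph to the weight $\rho_N+2=2\bigl(\frac{\rho_N}{2}+1\bigr)$, so that each $E_{\rho_N+2,N}^{(r)}$, $0\ie r\ie\nu(\Delta_N)-1$, is unitary of valuation $r$. For $N\se 2$, Theorem I-\ref{ThmVal1} (see also Corollary I-\ref{CorVal1}) supplies a unitary $E_{2,N}^{(0)}\in M_2(\Gamma_0(N))$ of valuation $0$. Hence for $k\se\frac{\rho_N}{2}+1$ the function $F^{(r)}:=E_{\rho_N+2,N}^{(r)}\,[E_{2,N}^{(0)}]^{k-\frac{\rho_N}{2}-1}$ lies in $M_{\rho_N+2}(\Gamma_0(N))\cdot M_{2k-\rho_N-2}(\Gamma_0(N))\subseteq M_{2k}(\Gamma_0(N))$, is a product of unitary forms hence unitary, and has valuation $r+0=r$. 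By the first part, every unitary upper triangular basis of $M_{2k}(\Gamma_0(N))$ has its first $\nu(\Delta_N)$ terms of valuations $0,1,\ldots,\nu(\Delta_N)-1$ and all remaining terms of valuation $\se\nu(\Delta_N)$; replacing $E_{2k,N}^{(r)}$ by $F^{(r)}$ for $0\ie r\ie\nu(\Delta_N)-1$ leaves the (strictly increasing) sequence of valuations unchanged, so the new family still has pairwise distinct valuations, hence is free, and it still has $d_{2k}(N)$ members, so it is again a unitary upper triangular basis. This legitimizes the displayed choice of the $E_{2k,N}^{(r)}$.

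The argument is essentially bookkeeping once the dimension jump of Theorem I-\ref{LemDim2} is in hand, so I do not expect a serious obstacle. The one input that really matters is the existence of a weight-$2$ form of valuation $0$ playing the role of the multiplier $[E_{2,N}^{(0)}]$, which is Theorem I-\ref{ThmVal1} and therefore implicitly restricts the second formula to $N\se 2$; for $N=1$ one has $\nu(\Delta_1)=1$, the statement collapses to $\nu(E_{2k,1}^{(0)})=0$ (the constant term), and the displayed product is to be read with the level-$1$ convention. The only point requiring a little care is thus to make sure the counting pins the valuations \emph{exactly} — that they are $\se 0$, pairwise distinct, and all strictly below $\nu(\Delta_N)$ — after which everything follows.
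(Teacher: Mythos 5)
Your proposal is correct and follows essentially the same route as the paper: it reads the first claim off the cardinality identity $Card\{s \ / \ \nu(E_{2k+\rho_N,N}^{(s)})<\nu(\Delta_N)\}=\nu(\Delta_N)$ obtained from Theorem I-\ref{LemDim2} together with Theorem I-\ref{ThmStruct1}, then pins the valuations by the counting argument and realizes the explicit basis vectors by multiplying the weight-$(\rho_N+2)$ forms by powers of the valuation-$0$ form $E_{2,N}^{(0)}$. Your additional care about the initial-segment structure, the legitimacy of swapping in the products $F^{(r)}$, and the degenerate case $N=1$ only makes explicit what the paper leaves implicit.
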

\bs

The theorem of structure and construction of bases can be formulated in its final form.

\begin{Thm}\label{ThmStruct10}
Let $N$ be a positive integer, then 
\begin{equation}
\forall k\in \N, \ k\se \frac{\rho_N}{2}, \ \ M_{2k}(\Gamma_0(N)) = \Delta_N.M_{2k-\rho_N}(\Gamma_0(N)) \oplus Vect\pa{E_{\rho_N+2,N}^{(s)}[E_{2,N}^{(0)}]^{k-\frac{\rho_N}{2}-1} \ / \ 0\ie s < \nu(\Delta_N)}.
\end{equation}
Therefore, if $k\in \N^*$ and $k = q\frac{\rho_N}{2}+r$ with $1\ie r \ie \frac{\rho_N}{2}$,
\begin{equation}
 M_{2k}(\Gamma_0(N)) = \Delta_N^{q}. M_{2r}(\Gamma_0(N)) \bigoplus_{n=0}^{q-1}  \Delta_N^n .Vect \pa{E_{\rho_N+2,N}^{(s)}[E_{2,N}^{(0)}]^{k-(n+1)\frac{\rho_N}{2}-1} \ / \ 0\ie s < \nu(\Delta_N)}.
\end{equation}
\end{Thm}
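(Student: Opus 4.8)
The plan is to deduce both displayed identities essentially formally from the two theorems that immediately precede this one: the first identity by substituting the explicit basis vectors into Theorem I-\ref{ThmStruct1}, and the second by induction on $q$.

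First I would record the decomposition furnished by Theorem I-\ref{ThmStruct1} applied to the strong modular unit $\Phi=\Delta_N$, of weight $\rho_N$: for every $k\se\frac{\rho_N}{2}$,
\begin{equation*}
M_{2k}(\Gamma_0(N)) = \Delta_N . M_{2k-\rho_N}(\Gamma_0(N)) \oplus Vect\pa{E_{2k,N}^{(s)} \ / \ \nu(E_{2k,N}^{(s)}) < \nu(\Delta_N)}.
\end{equation*}
For $k\se\frac{\rho_N}{2}+1$ the preceding theorem (the one asserting $\nu(E_{2k,N}^{(r)})=r$ for $0\ie r<\nu(\Delta_N)$) shows that the indices $s$ with $\nu(E_{2k,N}^{(s)})<\nu(\Delta_N)$ are precisely $0,1,\ldots,\nu(\Delta_N)-1$ and that for those $s$ one may take $E_{2k,N}^{(s)}=E_{\rho_N+2,N}^{(s)}[E_{2,N}^{(0)}]^{k-\frac{\rho_N}{2}-1}$; substituting these representatives yields the first formula. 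At the boundary value $k=\frac{\rho_N}{2}$, where the exponent $k-\frac{\rho_N}{2}-1$ is negative, the identity is just Theorem I-\ref{ThmStruct1} read directly, with the spanning set the genuine low-valuation basis vectors.

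Next I would establish the iterated identity by induction on $q$, writing $k=q\frac{\rho_N}{2}+r$ with $1\ie r\ie\frac{\rho_N}{2}$. The case $q=0$ is the tautology $M_{2r}(\Gamma_0(N))=M_{2r}(\Gamma_0(N))$, the direct sum being empty. For $q\se1$ one has $k\se\frac{\rho_N}{2}+1$, so the first formula applies and peels off the factor $\Delta_N . M_{2k-\rho_N}(\Gamma_0(N))$ together with the $n=0$ summand. Since $2k-\rho_N=2\pa{(q-1)\frac{\rho_N}{2}+r}$, the induction hypothesis decomposes $M_{2k-\rho_N}(\Gamma_0(N))$; multiplying that decomposition through by $\Delta_N$ and shifting the summation index $n\mapsto n+1$ produces the factor $\Delta_N^{q}.M_{2r}(\Gamma_0(N))$ and the summands indexed by $n=1,\ldots,q-1$, the exponent of $E_{2,N}^{(0)}$ in the $n$-th block becoming $k-(n+1)\frac{\rho_N}{2}-1$ as required. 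One uses here that multiplication by the nonzero holomorphic function $\Delta_N$ is injective, hence carries a direct sum of subspaces to a direct sum, so directness of the decomposition is preserved at every stage.

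All computations here are routine; the substantive inputs are already in hand. The only points to check are the index bookkeeping in the induction and the observation, supplied by the preceding theorem, that a unitary upper triangular basis of $M_{2k}(\Gamma_0(N))$ exhibits no jump among its first $\nu(\Delta_N)$ vectors once $k\se\frac{\rho_N}{2}+1$; this is exactly what permits replacing the abstract span $Vect\pa{E_{2k,N}^{(s)} \ / \ \nu(E_{2k,N}^{(s)})<\nu(\Delta_N)}$ by the explicit one. The mildest subtlety is the behaviour at $k=\frac{\rho_N}{2}$ just noted.
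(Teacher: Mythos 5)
Your proposal is correct and follows essentially the same route as the paper, which states this theorem as a direct consequence of Theorem I-\ref{ThmStruct1} combined with the preceding valuation theorem (no jumps among the first $\nu(\Delta_N)$ basis vectors once $k\se \frac{\rho_N}{2}+1$) and offers no further argument; your write-up simply makes the substitution of the explicit representatives and the iteration by induction on $q$ explicit. Your remark on the boundary case $k=\frac{\rho_N}{2}$, where the exponent $k-\frac{\rho_N}{2}-1$ is negative and one must fall back on Theorem I-\ref{ThmStruct1} read directly, is a legitimate point that the paper glosses over.
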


In practice, there are simple strategies for determining elements of a ${\cal B}_{2k+2}(\Gamma_0(N))$ basis knowing ${\cal B}_{2k}(\Gamma_0(N))$. For example :

\be
\item[-]
We can impose, as in Theorem I-\ref{ThmStruct10}, $E_{2k+2,N}^{(s)} = E_{2k,N}^{(s)}E_{2,N}^{(0)}$ for $0\ie s \ie d_{2k}(N)-1$, and also $E_{2k+2,N}^{(d_{2k}(N))} = E_{2k,N}^{(d_{2k}(N))}E_{2,N}^{(1)}$, for instance.

\item[-]
When $d_{\rho_N}(N) = \nu(\rho_N)+1$, for $r\in \llbracket0,\nu(\Delta_N) -1\rrbracket$ we have $\nu(E_{\rho_N,N}^{(r)}) = r$, and we can choose
\begin{equation}
\forall k \se \frac{\rho_N}{2}, \ \ \forall r\in \llbracket0,\nu(\Delta_N) -1\rrbracket, \ \ E_{2k,N}^{(r)} = E_{\rho_N,N}^{(r)}[E_{2,N}^{(0)}]^{k-\frac{\rho_N}{2}}.
\end{equation}

Theorem I-\ref{ThmStruct10} is then slightly improved since the knowledge of the $({\cal B}_{2k}(\Gamma_0(N)))_{1\ie k\ie \frac{\rho_N}{2}}$ bases is sufficient to generate a total sequence of bases $({\cal B}_{2k}(\Gamma_0(N)))_{k\in \N}$.
\ms

This remark will be exploited in Part II because $d_{\rho_N}(N) = \nu(\rho_N)+1$ is checked when $1\ie N\ie 10$.
\ee
\ms

Regarding the calculation of a unitary upper triangular basis ${\cal B}_{2k}(\Gamma_0(N))$, $k\se 1$, Theorem I-\ref{ThmStruct10} is operational since the knowledge of all bases is reduced to that of the finite sequence of bases $({\cal B}_{2k}(\Gamma_0(N)))_{1\ie k\ie \frac{\rho_N}{2}+1}$. We will see in the second part of this article that this is possible using elliptic functions \cite{FeauE}. We will explicitly provide such bases for $1 \ie N \ie 10$.

Moreover, as we have seen, the knowledge of unitary upper triangular bases $({\cal B}_{2k}(\Gamma_0(N)))_{1\ie k\ie k_0}$, for some $k_0 \ie \frac{\rho_N}{2}$ enables one to obtain many elements of ${\cal B}_{2k_0+2}(\Gamma_0(N))$. For example, we might notice that $E_{2,N}^{0}{\cal B}_{2k_0}(\Gamma_0(N)) \subset {\cal B}_{2k_0+2}(\Gamma_0(N))$, which greatly reduces the number of new modular forms to be determined to obtain a ${\cal S}_{2k_0+2}(\Gamma_0(N))$ unitary upper triangular basis.
\ms

On the other hand, SAGE allows explicit calculation of elements of $({\cal B}_{2k}(\Gamma_0(N)))_{1\ie k \ie \frac{\rho_N}{2}+1}$ to a given precision, which then gives access, without more complex calculations, to all bases $({\cal B}_{2k}(\Gamma_0(N))_{k\in \N^*}$ with the same precision.
\bs
\bs

\section{ -- From theory to practice}
\ms

Theorem I-\ref{ThmStruct10} reveals the structure of classical modular form spaces with respect to $\Gamma_0(N)$. To obtain unitary upper triangular bases of these spaces, it remains to determine unitary upper triangular bases ${\cal B}_{2k}(\Gamma_0(N)) = (E_{2k,N}^{(s)})_{0\ie s \ie d_{2k}(N)-1}$, $1 \ie k \ie \frac{\rho_N}{2}$ as well as the elements of ${\cal B}_{\rho_N+2}(\Gamma_0(N))$ : $(E_{\rho_N+2,N}^{(s)})_{0\ie s \ie \nu(\Delta_N)-1}$.
\ms

It is not an easy task, but many modular forms are identified in the literature, one can consult for example \cite{Kohl} for a broad study on the subject.
\ms

It is with this in mind that the second part of this article was written.  Show that, for $N$ levels between $1$ and $10$, it is possible to explain these first bases that initiate a structured sequence of bases for a given $N$ level.
\ms

It must nevertheless be noted that, despite generic processes that will certainly identify certain elements of these bases through elliptic functions, these will be case-by-case studies. Hecke's operator theory can also provide basic elements in a generic way \cite{Stein}.
\ms

However, the calculation approach can directly benefit from the results of the previous sections. Knowing the basics for $1 \ie 2k \ie \rho_N+2$ at an accuracy of $O(q^m)$ allows you to simply obtain upper triangular basics for any weight $2k > \rho_N+2$ with the same accuracy. It is thanks to Theorem I-\ref{ThmStruct10}, as well as the explicit formulas established for $\Delta_N$ in this part I and for $E_{2,N}^{(0)}$ in part II.
\bs
\bs

\section{ -- Conclusion of Part I}
\ms

To close this part, a few words to better situate the $\Delta_N$ functions compared to previous works. The products and quotients of $\eta$ functions have been studied by Rademacher \cite{RadRam} who introduces the functions $\ds \phi_\delta(\tau) = \eta(\delta\tau)/\eta(\tau)$ to establish that, if $p \se 5$ is prime and $r$ is an even integer, then $\phi_p^r$ is a weakly modular function of weight $0$ with respect to $\Gamma_0(p)$. This result was generalized by Newmann \cite{New1}, \cite{New2} to construct, still from the $\phi_\delta$ functions, weakly modular functions with respect to $\Gamma_0(N)$, with any $N$ this time and always of weight $0$.
\ms

Theorem I-\ref{ThUMF}, which enables one to establish that $\Delta_N$ functions are strong modular units, was essentially obtained by Ligozat \cite{Ligo} when studying elliptical modular curves. From then on, we essentially looked for $\eta$-quotients in order to find cuspidal modular forms. This is why, in our opinion, the notion of strong modular unity does not seem to have emerged, eclipsed by the highly justified importance taken by the cuspidal forms following Hecke's founding work.
\bs

By introducing the  $\Delta_N$ functions, we were able to elucidate the structure of the modular space sequences $(M_{2k}(\Gamma_0(N)))_{k\in\N^*}$, and provide an operational tool to explain a basis for each of these spaces. In the second part of this work, we will apply Theorem I-\ref{ThmStruct10} to the explicit search for bases when $1\ie N\ie 10$. To determine the few missing modular forms, we will introduce renormalized elliptic functions completely adapted to the search for modular forms \cite{FeauE}.
\bs

\end{document}